\newlist{assumption}{enumerate}{1}
\setlist[assumption]{label=(\textsc{a}\arabic*)}
\crefname{assumptioni}{Assumption}{Assumptions}
\newcommand{\N}{\mathbb{N}}
\newcommand{\R}{\mathbb{R}}
\newcommand{\1}{\mathbb{1}}
\newcommand{\Linop}{\mathbb{L}}
\DeclareMathOperator{\diag}{\mathrm{diag}}
\newcommand{\norm}[1]{\|#1\|}
\def\wkto{\rightharpoonup}
\renewcommand{\epsilon}{\varepsilon}
\pgfplotsset{compat=newest}
\pgfplotsset{plot coordinates/math parser=false}
\title{Bouligand--Landweber iteration for a non-smooth ill-posed problem}
\author{Christian Clason\thanks{Faculty of Mathematics, University of Duisburg-Essen, Thea-Leymann-Strasse 9, 45127 Essen, Germany\newline%
    (\email{christian.clason@uni-due.de}, \email{huu.vu@uni-due.de})}
    \and Vu Huu Nhu\footnotemark[1]
}
\begin{document}
\maketitle

\begin{abstract}
    This work is concerned with the iterative regularization of a non-smooth nonlinear ill-posed problem where the forward mapping is merely directionally but not Gâteaux differentiable. Using a Bouligand subderivative of the forward mapping, a modified Landweber method can be applied; however, the standard analysis is not applicable since the Bouligand subderivative mapping is not continuous unless the forward mapping is Gâteaux differentiable. We therefore provide a novel convergence analysis of the modified Landweber method that is based on the concept of \emph{asymptotic stability} and merely requires a generalized tangential cone condition. These conditions are verified for an inverse source problem for an elliptic PDE with a non-smooth Lipschitz continuous nonlinearity, showing that the corresponding \emph{Bouligand--Landweber iteration} converges strongly for exact data as well as in the limit of vanishing data if the iteration is stopped according to the discrepancy principle. This is illustrated with a numerical example.

    %     \medskip
    % 
    %     \noindent {\color{structure} Key words: Non-smooth inverse problem, ill-posed problem, Landweber iteration, iterative regularization, tangential cone condition} 
    % 
    %     \medskip
    %     \noindent {\color{structure} AMS Subject Classifications (2010).} 
    % 49K20, 49K40, 90C31
\end{abstract}

\section{Introduction}

We consider the (iterative) regularization of inverse problems $F(u) = y$ for a nonlinear parameter-to-state mapping $F:U\to Y$ between two Hilbert spaces $U$ and $Y$ that is compact and directionally but not Gâteaux differentiable. Specifically, we are interested in mappings arising as the solution operator to nonlinear partial differential equations with piecewise continuously differentiable nonlinearities. To fix ideas, let $\Omega$ be an open bounded subset of $\R^d, d \in \{2,3\}$, with a Lipschitz boundary $\partial\Omega$, and consider the non-smooth semilinear equation
\begin{equation} \label{eq:maxpde-intro}
    - \Delta y + y^+ = u \quad \text{in} \ \Omega, \quad y =0 \ \text{on} \ \partial\Omega
\end{equation} 
with $u \in L^2(\Omega)$ and $y^+(x) := \max(y(x),0)$ for almost every $x \in \Omega$; see \cite{Christof2017}. This equation models the deflection of a stretched thin membrane partially covered by water (see \cite{Kikuchi1984}); a similar equation arises in free boundary problems for a confined plasma; see, e.g., \cite{Temam:1975,Rappaz:1984,Kikuchi1984}. More complicated but related models (where the nonlinearity enters into higher-order terms) can be used to describe problems with sharp phase transitions such as the weak formulation of the two-phase Stefan problem \cite{Oleinik,Visintin96}.

Our goal is to estimate the source term $u$ in such models from noisy measurements $y^\delta$ of the state.
For the sake of presentation, in this work we will focus on \eqref{eq:maxpde-intro}, although our results also apply to similar equations with piecewise continuously differentiable nonlinearities in the potential term (cf.~\cref{app:piecewise-diff}). Since solution operators to elliptic equations are usually completely continuous, this problem is ill-posed and has to be regularized.
Here we consider iterative regularization methods of Landweber-type, which for a \emph{differentiable} forward mapping $F:U\to Y$ is given by
\begin{equation}\label{eq:landweber_smooth}
    u_{n+1}^\delta = u_{n}^\delta + w_n F'(u_n^\delta)^* \left( y^\delta - F(u_n^\delta) \right), \quad n \geq 0,
\end{equation} 
for a step size $w_n >0$ and the adjoint $F'(u)^*$ of the Fréchet derivative of $F$ at $u\in U$. For noisy data, the iteration has to be stopped at a stopping index $N=N(\delta,y^\delta)<\infty$ in order to be stable, e.g., according to the \emph{Morozov discrepancy principle} at the first index for which the residual norm $\norm{F(u_N^\delta)-y^\delta}_Y$ reaches the noise level $\delta$, where $\norm{y^\delta -y^\dag}_Y\leq \delta$ with $y^\dag = F(u^\dag)$ for some $u^\dag\in U$. 
Since the residual is calculated as part of the iteration, this principle can be evaluated cheaply in every iteration, avoiding unnecessary computational work (in contrast to, e.g., Tikhonov regularization, where in general the full solution has to be computed for a given regularization parameter before the principle can be checked).   
It is then possible to show that $u_{N}^\delta \to u^\dag$ as $\delta\to 0$, provided that a \emph{tangential cone condition} (which bounds the linearization error by the nonlinear residual) is satisfied at $u^\dag$; see \cite{Hanke1995}, \cite[Chaps.~2, 3]{Kaltenbacher2008}, \cite[Chap.~10]{Scherzer2011}. Needless to say, if $F$ is not Gâteaux differentiable, this procedure is not applicable.

However, Scherzer showed in \cite{Scherzer1995} that it is possible to replace the Fréchet derivative $F'(u)$ in \eqref{eq:landweber_smooth} by another linear operator $G_u$ that is sufficiently close to $F'(u)$ in an appropriate sense, leading to the so-called \emph{modified Landweber method}; in \cite{Kuegler:2003,Kuegler:2005}, such an operator was constructed for a class of parameter identification problems for linear elliptic equations. The purpose of this work is to show that the linear operator $G_u$ in the modified Landweber method can be taken from the Bouligand subdifferential of $F$, which is defined as the set of limits of Fréchet derivatives in differentiable points (see, e.g., \cite[Def.~2.12]{okz98} or \cite[Sec.~1.3]{kk02}) and in our case can be explicitly characterized via the solution of a suitable linearized PDE (cf.~\eqref{eq:bouligand_pde} below). We refer to this special case of the modified Landweber method as \emph{Bouligand--Landweber iteration}.
The main difficulty here is that the mapping $u\mapsto G_u$ is not continuous (cf.~\cref{ex:discon}), which is a critical tool in the classical convergence analysis used in \cite{Scherzer1995}. As one of the main contributions of our work, we therefore provide a new convergence analysis of the modified Landweber method based on the concept of \emph{asymptotic stability} of the iterates $u_{n}^\delta$ (cf.~\cref{def:AS}) which we show to hold under a \emph{generalized tangential cone condition} (cf. \cref{ass:gtcc}). We verify that the necessary conditions are satisfied for the Bouligand--Landweber iteration applied to \eqref{eq:maxpde-intro} provided the set of points where the non-smooth nonlinearity is non-differentiable at the exact data $y^\dag$ has sufficiently small Lebesgue measure (cf.~\cref{cor:TCC}). Although this analysis is specific to our model problem, we expect that it can serve as a framework for the iterative regularization of other Bouligand differentiable non-smooth mapping such as those involving variational inequalities \cite{Meyer:2018,Rauls:2018a,Rauls:2018b} and Stefan-type problems.

Let us briefly comment on related literature. Non-smooth inverse problems have attracted immense interest in recent years, although the focus has been mainly in the context of non-differentiable regularization methods in Banach spaces; see, e.g., the monographs \cite{SKHK12,Scherzeretal2009} as well as the references therein. One particular aspect relevant in our context are variational source conditions used to derive convergence rates, which require no explicit assumptions on the regularity of the forward operator and are thus applicable to non-smooth operators as well; see \cite{HKPS07}. However, none of the works so far focus on inverse problems for non-differentiable operators. In particular, the construction of $G_u$ in \cite{Kuegler:2003,Kuegler:2005} crucially depends on the linearity of the PDE (for a given parameter) and leads to the continuity of the mapping $u\mapsto G_u$, which is in fact required for their analysis. (Hence, their Landweber method is ``derivative-free'' in the same sense that Krylov methods can be implemented in a ``matrix-free'' way.) 
An alternative to iterative regularization is Tikhonov regularization, which for problems of the form \eqref{eq:maxpde-intro} leads to optimization problems that are known as \emph{mathematical programs with complementarity constraints}, which are challenging both analytically and numerically. 
Well-posedness and the numerical solution, but not its regularization properties, for the specific example of \eqref{eq:maxpde-intro} were treated in \cite{Christof2017}, on which our analysis is based. Similar results for a parabolic version of \eqref{eq:maxpde-intro} were obtained in \cite{ms16}.

\bigskip

This paper is organized as follows. After briefly summarizing basic notation,
we give our new convergence analysis of the modified Landweber method in \cref{sec:Alg}: in \cref{sec:free}, we show its well-posedness as well as the convergence in the noise-free setting, while \cref{sec:Reg} is devoted to its asymptotic stability and its regularization property. 
\Cref{sec:aux} then verifies the necessary assumptions for the specific model problem \eqref{eq:maxpde-intro}, in particular the generalized tangential cone condition, showing convergence and regularization properties of the corresponding Bouligand--Landweber iteration. Numerical examples illustrating its properties are presented in \cref{sec:Num}. 
Finally, the more technical \cref{app:piecewise-diff} extends the results of \cref{sec:aux} to a more general class of non-smooth PDEs involving piecewise differentiable nonlinearities.

\paragraph*{Notation.}  For a Hilbert space $X$, we denote by $(\cdot,\cdot)_{X}$ and $\|\cdot\|_{X}$ the inner product and the norm on $X$, respectively. For a given $z\in Z$ and $\rho>0$, we denote by $B_Z(z,\rho)$ and $\overline{B}_Z(z, \rho)$, the open and closed balls in $Z$ of radius $\rho$ centered at $z$.  For each measurable function $u$, we write $\{u <0\}$, $\{u=0\}$, and $\{u>0\}$ for the sets of almost every $x \in \Omega$ at which $u(x)$ is negative, zero and positive. For a measurable set $S$, we denote by $|S|$ its $d$-dimensional Lebesgue measure of $S$ and by $\1_S$ its characteristic function, i.e., $\1_S(x) = 1$ if $x \in S$ and $\1_S(x) = 0$ if $x \notin S$. Finally, the set of all bounded linear operators between the Hilbert spaces $X$ and $Y$ is denoted by $\Linop(X,Y)$.

\section{A new convergence analysis of the modified Landweber method}\label{sec:Alg}

The goal of this section is to show that the modified Landweber method of \cite{Scherzer1995} converges under more general conditions that are applicable to the non-smooth model problem \eqref{eq:maxpde-intro}.

We thus consider for some mapping $F:U\to Y$ between the real Hilbert spaces $U$ and $Y$ the inverse problem
\begin{equation} \label{eq:general-inverse}
    F(u) = y^\dag 
\end{equation}
for given $y^\dag\in \mathcal{R}(F)$, i.e., there exists a $u^\dag\in U$ with $F(u^\dag)=y^\dag$. 
For some $\rho> 0$, let 
\begin{equation*}
    S(u^\dag , \rho) := \left\{ u \in \overline  B_U(u^\dag , \rho ): F(u) = y^\dag \right\}
\end{equation*}
stand for the set of all solutions in $\overline B_U(u^\dag,\rho)$ of \eqref{eq:general-inverse}. Obviously, $u^\dag\in S(u^\dag ,\rho)$ for all $\rho> 0$.

We assume that $F$ together with a mapping $u\mapsto G_u\in \Linop(U,Y)$   satisfies the following conditions.
\begin{assumption}
\item\label{ass:cont} $F:U\to Y$ is completely continuous.

\item\label{ass:bouligand} There exist constants $L>0$ and $\rho_0>0$ such that
    $\| G_u \|_{\Linop(U,Y)} \leq L$ for every $u \in \overline B_U(u^\dag , \rho_0)$.

\item\label{ass:gtcc} There exist constants $\rho \in (0, \rho_0]$ and $\mu\in[0,1)$ such that the \emph{generalized tangential cone condition} 
    \begin{equation}
        \label{eq:GTCC}
        \|F(\hat u) - F(u) - G_u(\hat u - u) \|_{Y} \leq \mu \|F(\hat u) - F(u) \|_{Y}
        \tag{GTCC}
    \end{equation} 
    for all $u,\hat u \in \overline B_U(u^\dag,\rho)$ holds.

\item\label{ass:adj_bounded} There exists a Banach space $Z$ 
    such that $\bigcup_{u\in U}\mathcal{R}(G_u^*) \subset Z$ with $Z\subset U$ compactly. 
    Moreover, there exists a constant $\hat L>0$ such that 
    $\| G_u^* \|_{\Linop(Y,Z)} \leq \hat L$
    for all $u \in \overline  B_U(u^\dag,\rho)$.
\end{assumption}
Note that in contrast to \cite{Scherzer1995}, we do not require the continuity of the mapping $u\mapsto G_u$.  

Let now $y^\delta \in Y$ with $\norm{y^\delta-y^\dag}_Y \leq \delta$.
The \emph{modified Landweber iteration} for $F$ and $u\mapsto G_u$ is then given by
\begin{equation}
    \label{eq:modified-landweber} 
    u_{n+1}^\delta = u_n^\delta + w_n G_{u_n^\delta}^* \left( y^\delta - F(u_n^\delta) \right), \quad n \geq 0,
\end{equation} 
for the starting point $u_0^\delta := u_0$ and the step sizes $w_n >0$.
The iteration is stopped after $N_\delta := N(\delta,y^\delta)$ steps according to the \emph{discrepancy principle}, i.e., such that
\begin{equation}
    \label{eq:discrepancy}
    \| y^\delta - F(u^\delta_{N_\delta}) \|_{Y} \leq \tau \delta < \| y^\delta - F(u^\delta_{n}) \|_{Y}, \quad 0 \leq n < N_\delta,
\end{equation} 
for some constant $\tau >1$.

\subsection{Well-posedness and convergence}\label{sec:free}

We first show the well-posedness of \eqref{eq:modified-landweber} under our new assumptions. The proof of the following lemma is similar to the one in \cite[Prop.~2.2]{Hanke1995} with some modifications.
\begin{lemma} \label{lem:aux} 
    Assume that \cref{ass:bouligand,ass:gtcc} are fulfilled and let $\tau >1$, $\Lambda \geq \lambda > 0$ be such that
    \begin{equation} \label{choice}
        \frac{2(\mu + 1)}{\tau} - (2 - 2\mu - \Lambda L^2) < 0.
    \end{equation}
    Then, for any $\delta>0$, any starting point $u_0 \in \overline  B_{U}(u^\dag, \rho)$, and the step sizes $w_n \in [\lambda, \Lambda]$, the sequence $\{u_n^\delta\}_{0 \leq n \leq N_\delta}$ generated by \eqref{eq:modified-landweber} with the stopping index $N_\delta$ defined by the discrepancy principle \eqref{eq:discrepancy} satisfies the following assertions:
    \begin{enumerate}[label=(\roman*)]
        \item\label{it:aux1} the stopping index is finite, i.e., $N_\delta < \infty$;
        \item\label{it:aux2} $\| u_{n+1}^\delta - \tilde{u}\|_{U} < \| u_{n}^\delta - \tilde{u}\|_{U}$ for all $0 \leq n \leq N_\delta-1$ and for any $\tilde{u} \in S(u^\dag ,\rho)$.  Consequently, $u_n^\delta \in \overline  B_{U}(u^\dag, \rho)$ for all $0 \leq n \leq N_\delta$. 
    \end{enumerate}
\end{lemma}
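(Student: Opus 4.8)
The plan is to establish a \emph{strict} monotone decrease of the iteration error $\|u_n^\delta - \tilde u\|_U$ for every solution $\tilde u \in S(u^\dag,\rho)$, from which both assertions follow. Fix such a $\tilde u$, abbreviate the residual $r_n := y^\delta - F(u_n^\delta)$, and expand, using the iteration \eqref{eq:modified-landweber} and the adjoint identity $(v, G_{u_n^\delta}^* r_n)_U = (G_{u_n^\delta} v, r_n)_Y$,
\begin{equation*}
    \|u_{n+1}^\delta - \tilde u\|_U^2 - \|u_n^\delta - \tilde u\|_U^2 = 2 w_n \left(G_{u_n^\delta}(u_n^\delta - \tilde u), r_n\right)_Y + w_n^2 \|G_{u_n^\delta}^* r_n\|_U^2 .
\end{equation*}
The quadratic term is controlled by $w_n^2 L^2 \|r_n\|_Y^2$ using \cref{ass:bouligand} together with $\|G_{u_n^\delta}^*\|_{\Linop(Y,U)} = \|G_{u_n^\delta}\|_{\Linop(U,Y)}$.

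The crux is the cross term. I would apply \cref{ass:gtcc} with $u = u_n^\delta$ and $\hat u = \tilde u$ (both in $\overline B_U(u^\dag,\rho)$, which is justified by the induction below) together with $F(\tilde u) = y^\dag$ to write $G_{u_n^\delta}(u_n^\delta - \tilde u) = (F(u_n^\delta) - y^\dag) + e_n$ with a remainder obeying $\|e_n\|_Y \leq \mu \|y^\dag - F(u_n^\delta)\|_Y$. Inserting $y^\delta$ via $F(u_n^\delta) - y^\dag = -r_n + (y^\delta - y^\dag)$, using the noise bound $\|y^\delta - y^\dag\|_Y \leq \delta$ and the triangle inequality $\|y^\dag - F(u_n^\delta)\|_Y \leq \delta + \|r_n\|_Y$, Cauchy--Schwarz would yield
\begin{equation*}
    \left(G_{u_n^\delta}(u_n^\delta - \tilde u), r_n\right)_Y \leq -(1-\mu)\|r_n\|_Y^2 + (1+\mu)\delta\|r_n\|_Y .
\end{equation*}

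Combining both estimates and invoking the discrepancy principle \eqref{eq:discrepancy}, which guarantees $\delta < \|r_n\|_Y/\tau$ for every $n < N_\delta$, the difference of squared errors is bounded by $w_n \|r_n\|_Y^2 \bigl[\tfrac{2(1+\mu)}{\tau} - (2 - 2\mu - w_n L^2)\bigr]$; since $w_n \leq \Lambda$, the bracket is at most the left-hand side of \eqref{choice}, hence strictly negative. This gives the strict decrease in \ref{it:aux2}, but only once $u_n^\delta \in \overline B_U(u^\dag,\rho)$ so that \cref{ass:gtcc} is applicable. The monotonicity and the ball inclusion must therefore be proved \emph{together} by induction on $n$: with the base case $u_0 \in \overline B_U(u^\dag,\rho)$ and the choice $\tilde u = u^\dag \in S(u^\dag,\rho)$, each inductive step gives $\|u_{n+1}^\delta - u^\dag\|_U < \|u_n^\delta - u^\dag\|_U \leq \rho$, keeping the next iterate in the ball. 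This simultaneously establishes \ref{it:aux2} for all $\tilde u \in S(u^\dag,\rho)$ and the ball inclusion.

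For \ref{it:aux1} I would argue by contradiction: if $N_\delta = \infty$, the decrease estimate holds for every $n$, and summing it while using $w_n \geq \lambda$ telescopes to $\lambda C \sum_{n=0}^\infty \|r_n\|_Y^2 \leq \|u_0 - u^\dag\|_U^2 \leq \rho^2$, where $C := (2 - 2\mu - \Lambda L^2) - \tfrac{2(1+\mu)}{\tau} > 0$ by \eqref{choice}. Hence $\|r_n\|_Y \to 0$, contradicting $\|r_n\|_Y > \tau\delta > 0$, which holds for all $n$ when the stopping criterion is never met. The main obstacle is the careful bookkeeping of the cross term—splitting it into the genuine residual, the data-noise contribution, and the linearization remainder controlled by \cref{ass:gtcc}—so that the lower bound $\|r_n\|_Y > \tau\delta$ absorbs the noise term and reproduces precisely the threshold constant in \eqref{choice}; the only structural change from the classical argument is the interleaving of the monotonicity with the ball inclusion, which is forced by the fact that \cref{ass:gtcc} is assumed only locally.
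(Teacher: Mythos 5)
Your proposal is correct and follows essentially the same route as the paper's proof: the identical error expansion with the quadratic term bounded via \cref{ass:bouligand}, the cross term split into residual, noise, and \eqref{eq:GTCC}-controlled linearization remainder, the discrepancy bound $\delta < \|r_n\|_Y/\tau$ reproducing exactly the bracket in \eqref{choice}, and the interleaved induction for monotonicity and ball inclusion with $\tilde u = u^\dag$. Your contradiction argument for finiteness of $N_\delta$ is just a rephrasing of the paper's counting argument (summing the decrease estimate and bounding each residual below by $\tau\delta$), so the two proofs are equivalent in substance.
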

\begin{proof}
    We first justify the inequality in assertion \ref{it:aux2} and therefore prove   by induction that $u_n^\delta \in \overline  B_{U}(u^\dag, \rho)$ for all $0 \leq n \leq N_\delta$. By assumption, $u_0^\delta = u_0 \in \overline  B_{U}(u^\dag, \rho)$. Let us now assume that $u_n^\delta \in \overline  B_{U}(u^\dag, \rho)$ for some $n \leq N_\delta -1$ and let $\tilde{u}$ be an arbitrary element of $S(u^\dag ,\rho)$.     
    We have
    \begin{multline}
        \label{eq:esti0}
        \| u_{n+1}^\delta - \tilde{u} \|_{U}^2 - \| u_{n}^\delta - \tilde{u} \|_{U}^2 \\
        \begin{aligned}[t]
            & = 2 \left( u_n^\delta - \tilde{u}, u_{n+1}^\delta - u_n^\delta \right)_{U} + \| u_{n+1}^\delta - u_n^\delta\|_{U}^2 \\
            & = 2 w_n \left(G_{u_n^\delta} (u_n^\delta - \tilde{u}), y^\delta - F(u_n^\delta) \right)_{Y} + \| u_{n+1}^\delta - u_n^\delta\|_{U}^2 \\
            & = 2 w_n \left(F(\tilde{u}) - F(u_n^\delta) - G_{u_n^\delta} (\tilde{u} -u_n^\delta), y^\delta - F(u_n^\delta) \right)_{Y} \\
            & \quad - 2 w_n \left(F(\tilde{u}) - F(u_n^\delta), y^\delta - F(u_n^\delta) \right)_{Y} + \| u_{n+1}^\delta - u_n^\delta\|_{U}^2 \\
            & = 2 w_n \left(F(\tilde{u}) - F(u_n^\delta) - G_{u_n^\delta} (\tilde{u} -u_n^\delta), y^\delta - F(u_n^\delta)\right)_{Y} - 2w_n \| y^\delta - F(u_n^\delta)\|_{Y}^2 \\
            & \quad - 2w_n \left(y^\dag - y^\delta, y^\delta - F(u_n^\delta) \right)_{Y} + w_n^2 \left\|G_{u_n^\delta}^*\left(y^\delta - F(u_n^\delta) \right) \right\|_{U}^2, 
        \end{aligned}
    \end{multline}
    which together with \cref{ass:gtcc} implies that
    \begin{multline}\label{esti1}
        \| u_{n+1}^\delta - \tilde{u} \|_{U}^2 - \| u_{n}^\delta - \tilde{u} \|_{U}^2 \\
        \begin{aligned}[t]
            &\leq 2w_n\mu \|y^\dag- F(u_n^\delta) \|_{Y} \|y^\delta - F(u_n^\delta) \|_{Y} - 2w_n \| y^\delta - F(u_n^\delta)\|_{Y}^2 \\ 
            &\quad + 2w_n \delta \|y^\delta - F(u_n^\delta) \|_{Y}  + L^2w_n^2 \| y^\delta - F(u_n^\delta)\|_{Y}^2\\
            &= w_n \|y^\delta - F(u_n^\delta) \|_{Y} \left[ 2\mu \|y^\dag- F(u_n^\delta) \|_{Y} - (2 - L^2w_n) \| y^\delta - F(u_n^\delta)\|_{Y} + 2 \delta \right ].
        \end{aligned}
    \end{multline}
    Here we have used the fact that $\| G^*_{u} \|_{\Linop(Y, U)} = \| G_{u} \|_{\Linop(U, Y)}$ and the uniform bound from \cref{ass:bouligand}.
    From the discrepancy principle \eqref{eq:discrepancy}, one has
    \begin{equation} \label{disc1}
        \delta < \frac{1}{\tau}\| y^\delta - F(u_n^\delta)\|_{Y} \quad \text{for all } 0 \leq n < N_\delta 
    \end{equation}
    and so 
    \begin{equation*}
        \begin{aligned}
            \|y^\dag- F(u_n^\delta) \|_{Y} & \leq \delta + \|y^\delta- F(u_n^\delta) \|_{Y} \\
                                           & < \left( \frac{1}{\tau} + 1 \right)\|y^\delta- F(u_n^\delta) \|_{Y} 
            \quad \text{for all } 0\leq n<N_\delta.
        \end{aligned}
    \end{equation*} 
    This together with \eqref{esti1} and \eqref{disc1} implies for all $0 \leq n < N_\delta$ that 
    \begin{equation}
        \label{eq:esti_dec} 
        \begin{aligned}[t]
            \| u_{n+1}^\delta - \tilde{u} \|_{U}^2 - \| u_{n}^\delta - \tilde{u} \|_{U}^2 
            & < w_n \|y^\delta - F(u_n^\delta) \|_{Y}^2 \left[ 2\mu \left( \frac{1}{\tau} + 1 \right) - (2 - L^2w_n) + \frac{ 2}{\tau} \right ] \\
            & \leq w_n \left( \frac{2(\mu + 1)}{\tau} - (2 - 2\mu - \Lambda L^2) \right) \|y^\delta - F(u_n^\delta) \|_{Y}^2 \\
            & \leq \lambda\left( \frac{2(\mu + 1)}{\tau} - (2 - 2\mu - \Lambda L^2) \right) \|y^\delta - F(u_n^\delta) \|_{Y}^2 \\
            & = - \alpha \|y^\delta - F(u_n^\delta) \|_{Y}^2 
        \end{aligned}
    \end{equation} 
    with 
    \begin{equation*}
        \alpha :=- \lambda\left( \frac{2(\mu + 1)}{\tau} - (2 - 2\mu - \Lambda L^2) \right) > 0.
    \end{equation*}
    Here we have used the choice of parameters $w_n \in [\lambda, \Lambda]$ and condition \eqref{choice} in the last inequality.
    This implies that
    \begin{equation}
        \label{esti2}
        \| u_{n+1}^\delta - \tilde{u} \|_{U}^2 < \| u_{n}^\delta - \tilde{u} \|_{U}^2.   
    \end{equation}
    Applying \eqref{esti2} to the case $\tilde{u} = u^\dag$, we obtain $u_{n+1}^\delta \in \overline B_U(u^\dag ,\rho)$. Proceeding as above, we can show that \eqref{esti2} holds for all $0 \leq n \leq N_\delta -1$. This yields assertion \ref{it:aux2}.

    To obtain assertion \ref{it:aux1}, we first define the set
    \begin{equation*}
        I := \{ n \in \N: \|y^\delta - F(u_n^\delta) \|_{Y} > \tau \delta \}.
    \end{equation*}
    For any $n \in I$, we see from \eqref{eq:esti_dec} that            
    \begin{equation*}
        \|y^\delta - F(u_n^\delta) \|_{Y}^2      < \frac{1}{\alpha} \left( \| u_{n}^\delta - \tilde{u} \|_{U}^2 - \| u_{n+1}^\delta - \tilde{u} \|_{U}^2 \right)
    \end{equation*}
    and thus 
    \begin{equation} \label{esti3}
        \sum_{n \in I} \|y^\delta - F(u_n^\delta) \|_{Y}^2 < \frac{1}{\alpha} \| u_{0}- \tilde{u} \|_{U}^2 < \infty.
    \end{equation}
    From the definition of the set $I$, we obtain $\|y^\delta - F(u_n^\delta) \|_{Y} > \tau \delta$ for all $n \in I$ and therefore
    \begin{equation*}
        \sum_{n \in I} \|y^\delta - F(u_n^\delta) \|_{Y}^2 > \sum_{n \in I} (\tau \delta)^2 = (\tau \delta)^2|I|.
    \end{equation*}
    This together with \eqref{esti3} ensures that the set $I$ and hence $N_\delta = |I| +1$ is finite as claimed. 
\end{proof}

%%%%%%%%% Convergence noise-free

% \subsection{Convergence in the noise-free setting} 

From now on, we need to differentiate between the cases of noise-free $(\delta =0)$ and noisy $(\delta >0)$ data. 
Let thus $u_n^\delta$, $y_n^\delta := F(u_n^\delta)$ and $u_n$, $y_n:= F(u_n)$ be generated by the modified Landweber iteration \eqref{eq:modified-landweber} corresponding to $\delta >0$ and $\delta=0$, respectively. We first consider the noise-free setting.
%%% the reasonable wanderer 
\begin{lemma} \label{lem:RW}
    Let \cref{ass:bouligand,ass:gtcc} be fulfilled. Let further $\lambda$ and $\Lambda$  satisfy  $\Lambda \geq \lambda >0$ and
    \begin{equation} \label{choice2}
        (2 - 2\mu - \Lambda L^2) > 0.
    \end{equation}
    Then, for any starting point $u_0 \in \overline  B_{U}(u^\dag, \rho)$ and the step sizes $\{w_n\}_{n\in\N} \subset [\lambda, \Lambda]$, we have that
    \begin{equation} \label{eq:decreasing_free}
        \| u_{n+1} - u^\dag \|_{U}^2 \leq \| u_{n} - u^\dag \|_{U}^2 \quad \text{for all } n \geq 0 
    \end{equation}
    and
    \begin{equation} \label{reasonable-wanderer}
        \sum_{n=0}^\infty \| y^\dag - F(u_n) \|_{Y}^2 < \infty.
    \end{equation}
\end{lemma}
\begin{proof}
    Similarly to \eqref{eq:esti0} with $\tilde{u} := u^\dag$, we obtain that
    \begin{multline*}
        \| u_{n+1}- u^\dag \|_{U}^2 - \| u_{n} - u^\dag \|_{U}^2  = 2 w_n \left(F(u^\dag) - F(u_n) - G_{u_n} (u^\dag -u_n), y^\dag - F(u_n)\right)_{Y}  \\
        \begin{aligned}
            &- 2w_n \| y^\dag - F(u_n)\|_{Y}^2 + w_n^2 \left\|G_{u_n}^*\left(y^\dag - F(u_n) \right) \right\|_{U}^2, 
        \end{aligned}
    \end{multline*}
    which together with \cref{ass:gtcc,ass:bouligand} yields that
    \begin{equation*} 
        \begin{aligned}[t]
            \| u_{n+1} - u^\dag \|_{U}^2 - \| u_{n} - u^\dag \|_{U}^2 
            & \leq \|y^\dag - F(u_n) \|_{Y}^2 \left[ 2 w_n \mu - 2w_n + w_n^2 L^2 \right]\\
            & \leq - \lambda \left(2- 2\mu - L^2\Lambda \right)\|y^\dag - F(u_n) \|_{Y}^2 
        \end{aligned}
    \end{equation*} 
    for all $n\geq 0$, where we have used the fact that $w_n \in [\lambda, \Lambda]$ for all $n\geq 0$.
    Consequently, we obtain \eqref{eq:decreasing_free} and
    \begin{equation*} 
        %\label{reasonable-wanderer1}
        \sum_{n=0}^\infty \| y^\dag - F(u_n) \|_{Y}^2 \leq \frac{1}{\lambda \left(2- 2\mu - L^2\Lambda \right)} \| u_{0} - u^\dag \|_{U}^2 <\infty,
    \end{equation*}
    which yields \eqref{reasonable-wanderer}.  
\end{proof}

We can now obtain a convergence result for the noise-free setting, whose proof follows along the lines of the one of \cite[Thm.~2.3]{Hanke1995}.
\begin{theorem} \label{thm:free}
    Under the assumptions of \cref{lem:RW}, the modified Landweber iteration \eqref{eq:modified-landweber} corresponding to $\delta=0$ either stops after finitely many iterations with an iterate coinciding with an element of $S(u^\dag, \rho)$ or generates a sequence of iterates that converges strongly to an element of $S(u^\dag, \rho)$ in $U$.
\end{theorem}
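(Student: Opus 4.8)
The plan is to argue the stated dichotomy directly. First I would dispose of the degenerate alternative: if $F(u_n) = y^\dag$ for some finite $n$, then the update direction in \eqref{eq:modified-landweber} satisfies $G_{u_n}^*(y^\dag - F(u_n)) = 0$, so $u_{n+1} = u_n$ and the iteration stalls at $u_n$; by \cref{lem:RW} we have $u_n \in \overline B_U(u^\dag,\rho)$, and since $F(u_n) = y^\dag$, this iterate lies in $S(u^\dag,\rho)$. It therefore remains to treat the case in which $F(u_n) \neq y^\dag$ for every $n$, where the goal is to show that $\{u_n\}$ is a Cauchy sequence in $U$.

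For this I would exploit the two consequences of \cref{lem:RW}: the sequence $\norm{u_n - u^\dag}_U$ is monotonically nonincreasing and hence convergent, and $\sum_n \norm{y^\dag - F(u_n)}_Y^2 < \infty$, so that the residuals tend to zero and the tails of this series vanish. Following \cite[Thm.~2.3]{Hanke1995}, for $m > n$ I would select an index $l \in \{n, \dots, m\}$ minimizing the residual, i.e.\ $\norm{y^\dag - F(u_l)}_Y \le \norm{y^\dag - F(u_j)}_Y$ for all $n \le j \le m$, split $\norm{u_m - u_n}_U \le \norm{u_m - u_l}_U + \norm{u_l - u_n}_U$, and estimate each piece through the polarization identity
\begin{equation*}
    \norm{u_l - u_k}_U^2 = 2\left(u_l - u_k, u_l - u^\dag\right)_U + \norm{u_k - u^\dag}_U^2 - \norm{u_l - u^\dag}_U^2, \qquad k \in \{n, m\}.
\end{equation*}
The difference of squared distances is controlled because $\norm{u_k - u^\dag}_U$ converges and is hence Cauchy.

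The heart of the argument, and the step I expect to be the main obstacle, is bounding the cross term $(u_l - u_k, u_l - u^\dag)_U$ \emph{without} any continuity of $u \mapsto G_u$. Writing $u_l - u_k$ as a telescoping sum of the update steps $w_j G_{u_j}^*(y^\dag - F(u_j))$ and moving $G_{u_j}^*$ to the other factor reduces matters to estimating $(y^\dag - F(u_j), G_{u_j}(u_l - u^\dag))_Y$. Here \cref{ass:gtcc} enters decisively: applying \eqref{eq:GTCC} to the pairs $(u_j, u_l)$ and $(u_j, u^\dag)$ gives
\begin{equation*}
    G_{u_j}(u_l - u^\dag) = \left(F(u_l) - y^\dag\right) + r_j, \qquad \norm{r_j}_Y \le \mu\,\norm{F(u_l) - F(u_j)}_Y + \mu\,\norm{y^\dag - F(u_j)}_Y.
\end{equation*}
Using $\norm{F(u_l) - F(u_j)}_Y \le \norm{F(u_l) - y^\dag}_Y + \norm{y^\dag - F(u_j)}_Y$ together with the minimality $\norm{F(u_l) - y^\dag}_Y \le \norm{y^\dag - F(u_j)}_Y$, every factor becomes dominated by $\norm{y^\dag - F(u_j)}_Y$, and Cauchy--Schwarz then yields $|(y^\dag - F(u_j), G_{u_j}(u_l - u^\dag))_Y| \le (1 + 3\mu)\norm{y^\dag - F(u_j)}_Y^2$. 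Summing over the telescoped range bounds the cross term by $\Lambda(1+3\mu)\sum_{j}\norm{y^\dag - F(u_j)}_Y^2$, a tail of the convergent series, which therefore tends to $0$. The crucial point is that \eqref{eq:GTCC} is applied pointwise at each $u_j$ and never compares $G_{u_j}$ with $G_{u_k}$ for distinct indices, which is exactly what lets the argument dispense with the continuity hypothesis of \cite{Scherzer1995}.

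Combining the three vanishing contributions shows $\norm{u_m - u_n}_U \to 0$, so $\{u_n\}$ converges strongly to some $u^* \in U$. Since every iterate lies in the closed ball $\overline B_U(u^\dag, \rho)$, the limit satisfies $u^* \in \overline B_U(u^\dag, \rho)$. Finally, the complete continuity of $F$ from \cref{ass:cont} (indeed mere continuity suffices here, given strong convergence) gives $F(u_n) \to F(u^*)$, while $\norm{y^\dag - F(u_n)}_Y \to 0$ forces $F(u^*) = y^\dag$; hence $u^* \in S(u^\dag, \rho)$, which completes the proof.
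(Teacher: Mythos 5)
Your proposal is correct and follows essentially the same argument as the paper's proof (which itself adapts \cite[Thm.~2.3]{Hanke1995}): select the minimal-residual index in the range, combine the three-point identity with telescoping of the update steps, and apply \eqref{eq:GTCC} pointwise at each iterate $u_j$ against both $u_l$ and $u^\dag$ so that no comparison of $G_{u_j}$ with $G_{u_k}$ for $j\neq k$ is ever needed. The only differences are cosmetic: you split $\norm{u_m-u_n}_U$ by the triangle inequality rather than the squared Cauchy--Schwarz estimate, and your decomposition $G_{u_j}(u_l-u^\dag)=(F(u_l)-y^\dag)+r_j$ yields the marginally sharper constant $(1+3\mu)$ in place of the paper's $3(1+\mu)$.
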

\begin{proof}
    If the algorithm stops after finitely many iterations, then the last iterate $u_N$ satisfies $F(u_N) = y^\dag$ due to the discrepancy principle \eqref{eq:discrepancy}. From \eqref{eq:decreasing_free} and the fact that $u_0 \in \overline B_U(u^\dag ,\rho)$, we have $u_N \in \overline B_U(u^\dag ,\rho)$ and hence $u_N \in S(u^\dag, \rho)$.

    It remains to prove the claim for the case where the algorithm generates an infinite sequence $\{ u_n \}_{n \in \N}$. To this end, we first observe from \eqref{eq:decreasing_free} and the fact $u_0 \in \overline B_U(u^\dag ,\rho)$ that $u_n \in \overline B_U(u^\dag ,\rho)$ for all $n \geq 0$. We now set $e_n := u^\dag - u_n$ for all $n \geq 0$. Then, \eqref{eq:decreasing_free} implies that $\{\|e_n \|_{U}\}_{n \in \N}$ is monotonically decreasing and hence
    \begin{equation} \label{eq:lim1-free}
        \lim_{n \to \infty} \|e_n \|_{U} = {\gamma}
    \end{equation}
    for some ${\gamma} \geq 0$. 
    For any $m,l \in \N$ with $m \leq l$, choose
    \begin{equation} \label{choice-r-free}
        k \in \arg\min_{m\leq t\leq l}\| y^\dag - y_t \|_{Y}.
    \end{equation}
    The Cauchy--Schwarz inequality then yields that
    \begin{equation} \label{Cau-1-free}
        \| u_m - u_l \|_{U}^2 \leq 2 \left( \| u_m -  u_k \|_{U}^2 + \|  u_k - u_l \|_{U}^2 \right),
    \end{equation}
    and the three-point identity
    \begin{equation*}
        \|a-b \|_{U}^2 = \|a-c \|_{U}^2 - \|b-c \|_{U}^2 + 2(a-b,c-b)_{U} 
    \end{equation*}
    further implies that
    \begin{align*}
        \| u_m -  u_k \|_{U}^2 &= \|  u_m - u^\dag \|_{U}^2 - \| u_k - u^\dag \|_{U}^2 + 2 \left( u_{m}-  u_k, u^\dag - u_k \right)_{U}, \\
        \| u_l - u_k \|_{U}^2 &= \| u_l - u^\dag \|_{U}^2 - \| u_k - u^\dag \|_{U}^2 + 2 \left( u_{l}- u_k, u^\dag - u_k \right)_{U}. 
    \end{align*}
    Combining this with \eqref{Cau-1-free} yields that
    \begin{equation}\label{eq:ine-43-free}
        \begin{aligned}[t]
            \| u_m - u_l \|_{U}^2 & \leq 2 \left[\| e_m \|_{U}^2 + \| e_l \|_{U}^2 - 2\| e_k \|_{U}^2 \right] + 4\left(e_k - e_m, e_k \right)_{U} + 4 \left(e_k-e_l, e_k \right)_{U} \\
                                  & = a_{m,l,k} + b_{m,l,k} 
        \end{aligned}
    \end{equation}
    with
    \begin{align*}
        a_{m,l,k} &:= 2 \left[\| e_m \|_{U}^2 + \| e_l \|_{U}^2 - 2\| e_k \|_{U}^2 \right]
        \shortintertext{and} 
        b_{m,l,k} &:= 4\left(e_k-e_m, e_k \right)_{U} + 4 \left(e_k -e_l, e_k \right)_{U}.      
    \end{align*}
    Since $l \geq k \geq m$, it follows that $k \to \infty$ and $l \to \infty$ whenever $m \to \infty$. From this and \eqref{eq:lim1-free}, we obtain that
    \begin{equation} \label{eq:lim-431-free}
        a_{m,l,k} \to 0 \quad \text{as } m \to \infty.
    \end{equation}
    Moreover, we have that
    \begin{equation}\label{eq:ine-44-free}
        \left(e_{k}-e_m, e_k \right)_{U}  = \sum_{n=m}^{k-1} \left(e_{n+1}-e_n, e_k \right)_{U}  \leq \sum_{n=m}^{k-1} |\left(e_{n+1}-e_n, e_k \right)_{U}|. 
    \end{equation}
    From \eqref{eq:modified-landweber}, we then obtain that $e_{n+1} - e_{n} = -w_n G_{u_n}^*(y^\dag -  y_n)$, and hence
    \begin{equation*}
        \begin{aligned}
            \left(e_{n+1}-e_n, e_k \right)_{U} & = -w_n\left(y^\dag -  y_n,G_{u_n} e_k \right)_{Y} \\
                                               & = w_n\left(y^\dag -  y_n,G_{ u_n} (u_k - u^\dag) \right)_{Y}. 
        \end{aligned}
    \end{equation*}
    It follows that 
    \begin{equation} \label{eq:ine-45-free}
        | \left(e_{n+1}-e_n, e_k \right)_{U} | \leq w_n \| y^\dag -  y_n \|_{Y} \| G_{ u_n} ( u_k - u^\dag)\|_{Y}.
    \end{equation}
    We now estimate the term $\| G_{ u_n}( u_k - u^\dag ) \|_{Y}$. From \cref{ass:gtcc} and the triangle inequality, it follows that 
    \begin{equation}\label{eq:ine-461-free}
        \begin{aligned}[t]
            \| G_{u_n}(u_k - u^\dag ) \|_{Y} & \leq \| G_{u_n}( u^\dag - u_n ) \|_{Y} + \| G_{u_n}(u_k -u_n ) \|_{Y} \\
                                             & \leq     \| y^\dag - y_n \|_{Y} +        \| F(u^\dag) - F( u_n) - G_{ u_n}( u^\dag - u_n ) \|_{Y} \\
                                             \MoveEqLeft[-1] + \| G_{u_n}(u_k - u_n ) \|_{Y} \\
%                                              & \leq     \| y^\dag - y_n\|_{Y} + \mu      \| y^\dag -  y_n \|_{Y} + \| G_{ u_n}(u_k - u_n ) \|_{Y} \\
                                             & \leq     (1+ \mu)\| y^\dag - y_n \|_{Y} + \| G_{u_n}(u_k - u_n ) \|_{Y}. 
        \end{aligned}
    \end{equation}
    In addition, we see from \eqref{eq:GTCC} that 
    \begin{equation*}
        \| F(u_k) - F(u_n) - G_{u_n}( u_k - u_n ) \|_{Y} \leq \mu \| F( u_k) - F(u_n) \|_{Y}
    \end{equation*}
    and hence
    \begin{equation*}
        \begin{aligned}
            \| G_{u_n}(u_k - u_n ) \|_{Y}
            & \leq \left(1+ \mu\right)\| F( u_k) - F( u_n) \|_{Y} \\
            & \leq \left(1+ \mu\right) \left(\| y^\dag - F(u_k) \|_{Y}+ \| y^\dag - F(u_n) \|_{Y} \right) \\
            & \leq 2 \left(1+ \mu\right) \|y^\dag - y_n \|_{Y}.
        \end{aligned}
    \end{equation*}
    This and \eqref{eq:ine-461-free} give 
    \begin{equation}
        \label{eq:ine-46-free} 
        \| G_{u_n}(u_k - u^\dag ) \|_{Y} \leq 3(1+ \mu) \| y^\dag - y_n \|_{Y}.
    \end{equation} 
    The combination of this with \eqref{eq:ine-45-free} yields that
    \begin{equation*}
        | \left(e_{n+1}-e_n, e_k \right)_{U} | \leq 3(1+\mu) w_n \| y^\dag -  y_n \|_{Y}^2, 
    \end{equation*}
    which, together with \eqref{eq:ine-44-free}, ensures that 
    \begin{equation*}
        | \left(e_k - e_m, e_k \right)_{U} |  \leq 3(1+\mu) \sum_{n=m}^{k-1} w_n{\| y^\dag -  y_n \|_{Y}^2} 
        \leq 3(1+\mu) \Lambda \sum_{n=m}^{k-1}{\| y^\dag - y_n \|_{Y}^2}.
    \end{equation*}
    Similarly, we have that
    \begin{equation*}
        | \left(e_k - e_l, e_k \right)_{U} | \leq 3(1+\mu) \Lambda \sum_{n=k}^{l-1}{\| y^\dag - y_n \|_{Y}^2},
    \end{equation*}
    leading to
    \begin{equation*}
        b_{m,l,k} = 4\left(e_k-e_m, e_k \right)_{U} + 4 \left(e_k -e_l, e_k \right)_{U} 
        \leq 12(1+\mu) \Lambda \sum_{n=m}^{l-1}{\| y^\dag - y_n \|_{Y}^2}.
    \end{equation*}
    Combining this with \eqref{reasonable-wanderer} yields that
    \begin{equation}
        \label{eq:lim-432-free} 
        b_{m,l,k} \to 0 \quad \text{as } l\geq k \geq m \to \infty.
    \end{equation}
    The limits \eqref{eq:lim-431-free} and \eqref{eq:lim-432-free} together with \eqref{eq:ine-43-free} imply that $\{u_n\}_{n \in \N}$ is a Cauchy sequence in $U$.
    Thus, there exists an element $\bar u \in U$ such that $u_n \to \bar u$ and hence
    $F(u_n) \to F(\bar u)$ by \cref{ass:cont} as $n \to \infty$. 
    In addition, we see from \eqref{reasonable-wanderer} that
    $y^\dag - F(u_n) \to 0$ as $n \to \infty$, and hence $y^\dag = F(\bar u)$. Since $u_n \in \overline B_U(u^\dag ,\rho)$ for all $n \geq 0$, it holds that $\bar u \in \overline{B}_U(u^\dag ,\rho)$ and hence that $\bar u \in S(u^\dag , \rho)$, which completes the proof.
\end{proof}

\subsection{Regularization property} \label{sec:Reg}

We now consider the convergence of the modified Landweber method for $\delta\to 0$. 
To simplify the notation in this subsection, for any $\delta_k>0$ and corresponding noisy data $y^{\delta_k}\in \overline B_Y(y^\dag,\delta_k)$ we introduce $N_k:=N(\delta_k, y^{\delta_k})$ and $u_k:=u_{N_k}^{\delta_k}$.

We first note that assertion \ref{it:aux2} in \cref{lem:aux} ensures the boundedness of the family $\left\{u_k\right\}_{k \in \N}$, which together with the reflexivity of $U$ already ensures weak convergence as $\delta_k \to 0$.
\begin{proposition} 
    Assume that all hypotheses of \cref{lem:aux} hold and that in addition \cref{ass:cont} is fulfilled. Let $\{\delta_k\}_{k \in \N}$ be a positive zero sequence. Then, any subsequence of $\left\{u_k\right \}_{k \in \N}$ contains a further subsequence that converges weakly to some $\bar u\in S(u^\dag, \rho)$ in $U$. 
    In addition, if $u^\dag$ is the unique solution of \eqref{eq:general-inverse} in $\overline B_U(u^\dag , \rho)$, then $\left\{u_k\right \}_{k \in \N}$ converges weakly to $u^\dag $ in $U$.
\end{proposition}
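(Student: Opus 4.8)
The plan is to combine the uniform boundedness of the final iterates from \cref{lem:aux} with the complete continuity of $F$ and the discrepancy principle; no smoothness or continuity of $u \mapsto G_u$ is needed. First I would note that, since all hypotheses of \cref{lem:aux} hold, assertion \ref{it:aux1} guarantees $N_k = N(\delta_k, y^{\delta_k}) < \infty$ so that each $u_k = u_{N_k}^{\delta_k}$ is well defined, while assertion \ref{it:aux2} gives $u_k \in \overline B_U(u^\dag, \rho)$ for every $k$. Hence the family $\{u_k\}_{k\in\N}$ is bounded in $U$. As $U$ is a Hilbert space and thus reflexive, any subsequence of $\{u_k\}_{k\in\N}$ admits a further subsequence (not relabeled) converging weakly to some $\bar u \in U$. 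Because $\overline B_U(u^\dag,\rho)$ is convex and closed, it is weakly closed, so the weak limit satisfies $\bar u \in \overline B_U(u^\dag,\rho)$.

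It then remains to identify $\bar u$ as a solution, i.e., to show $F(\bar u) = y^\dag$, and here I would exploit two convergences of $F(u_k)$ simultaneously. On the one hand, complete continuity of $F$ (\cref{ass:cont}) turns the weak convergence $u_k \wkto \bar u$ into the strong convergence $F(u_k) \to F(\bar u)$ in $Y$. On the other hand, the discrepancy principle \eqref{eq:discrepancy} yields $\|y^{\delta_k} - F(u_k)\|_{Y} \leq \tau \delta_k$, which together with $\|y^{\delta_k} - y^\dag\|_{Y} \leq \delta_k$ and the triangle inequality gives $\|F(u_k) - y^\dag\|_{Y} \leq (\tau+1)\delta_k \to 0$ as $k \to \infty$. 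Thus $F(u_k) \to y^\dag$ strongly, and by uniqueness of strong limits in $Y$ I conclude $F(\bar u) = y^\dag$. Combined with $\bar u \in \overline B_U(u^\dag,\rho)$, this shows $\bar u \in S(u^\dag,\rho)$ and establishes the first assertion.

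For the second assertion I would invoke the standard subsequence criterion. If $u^\dag$ is the unique solution of \eqref{eq:general-inverse} in $\overline B_U(u^\dag,\rho)$, then $S(u^\dag,\rho) = \{u^\dag\}$, so by the first part every subsequence of $\{u_k\}_{k\in\N}$ contains a further subsequence converging weakly to $u^\dag$. Since every subsequence admits a sub-subsequence with the same weak limit $u^\dag$, the entire sequence $\{u_k\}_{k\in\N}$ converges weakly to $u^\dag$ in $U$.

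The argument is essentially routine once \cref{lem:aux} is available; the only conceptually essential step is the identification of the weak limit as a solution. This is exactly where complete continuity is indispensable: without \cref{ass:cont}, weak convergence of $u_k$ would give no control over $F(u_k)$, and the discrepancy bound on the residual could not be promoted to $F(\bar u) = y^\dag$. Thus the main (albeit mild) obstacle is reconciling the weak-to-strong transfer $F(u_k) \to F(\bar u)$ with the residual decay $F(u_k) \to y^\dag$, which \cref{ass:cont} and the discrepancy principle together resolve.
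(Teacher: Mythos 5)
Your proof is correct and follows essentially the same route as the paper: boundedness of the stopped iterates from \cref{lem:aux}, weak compactness in the Hilbert space $U$, complete continuity of $F$ to pass from weak convergence of $u_k$ to strong convergence of $F(u_k)$, the discrepancy principle to force the residual to zero, and a subsequence--subsequence argument for the uniqueness case. The only (harmless) difference is presentational: you make explicit the weak closedness of $\overline B_U(u^\dag,\rho)$ and the triangle-inequality bound $\|F(u_k)-y^\dag\|_Y \leq (\tau+1)\delta_k$, both of which the paper leaves implicit.
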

\begin{proof}
    Without loss of generality, let $\{\delta_{k} \}_{k \in \N}$ itself be an arbitrary subsequence.
    Since $\left\{u_k\right \}_{k \in \N}$ is bounded in $U$, there exist a subsequence, also denoted by $\left\{u_k\right \}_{k \in \N}$, and an element $\bar u \in U$ such that
    \begin{equation*}
        u_k \rightharpoonup \bar u \quad \text{as } k \to \infty.
    \end{equation*} 
    By virtue of \cref{ass:cont}, 
    \begin{equation*}
        F\left(u_k\right) \to F(\bar u) \quad \text{as } k \to \infty
    \end{equation*}
    and hence $y^{\delta_k}- F(u_k) \to y^\dag - F(\bar u)$ in $Y$. From the discrepancy principle, we have that
    \begin{equation*}
        \lim_{k \to \infty} \|y^{\delta_k} - F(u_k) \|_{Y} =0, 
    \end{equation*}
    which implies that $F(\bar u) = y^\dag$ and thus $\bar u \in S(u^\dag , \rho)$. 

    If $u^\dag$ is the unique solution of \eqref{eq:general-inverse} in $\overline B_U(u^\dag , \rho)$, a subsequence--subsequence argument ensures that the original, full, sequence $\left\{u_k\right \}_{k \in \N}$ converges weakly to $u^\dag $ in $U$.
\end{proof}

In the remainder of this section, we will show that the modified Landweber iteration together with the discrepancy principle is a \emph{strongly} convergent regularization method, i.e., for any positive zero sequence $\left\{\delta_k\right \}_{k \in \N}$, the sequence $\left\{u_k\right \}_{k \in \N}$ generated by the  \eqref{eq:modified-landweber} stopped according to \eqref{eq:discrepancy} admits a subsequence that converges strongly to an element of $S(u^\dag ,\rho)$.
Note that we have \emph{not} assumed the continuity of the mapping $U \ni u \mapsto G_{u} \in \Linop(U,Y)$, which implies that $u_n^\delta$ is, in general, not continuous with respect to $y^\delta$. We therefore cannot apply the standard technique from \cite{Hanke1995,Scherzer1995,Scherzer2011}.
To overcome this difficulty,  we need the following notion. 
\begin{definition} \label{def:AS}
    Let $\{u_n^\delta\}_{n\leq N_\delta}$ be a (finite or infinite) sequence generated by an iterative method for some $\delta>0$.
    Then the method is \emph{asymptotically stable} if any positive zero sequence $\{\delta_k\}_{ k \in \N}$ has a subsequence $\{ \delta_{k_i} \}_{i \in \N}$ such that $\overline N := \lim_{i \to \infty}N_{{\delta_{k_i}}} \in \N \cup \{\infty\}$ and the following conditions hold:    
\begin{enumerate}[label=(\roman*)]
    \item\label{it:AS1} 
        For all $0 \leq n \leq \overline N$  (where the last inequality is strict if $\overline N = \infty$),
        \begin{equation}\label{asym-stab}
            u_{n}^{\delta_{k_i}} \to \tilde{u}_n \quad \text {in $U$ as } i \to \infty  
        \end{equation}
        for some $\tilde{u}_n \in \overline{B}_U(u^\dag ,\rho)$.

    \item\label{it:AS2} 
        If $\overline N = \infty$, there exists a $\tilde{u} \in S(u^\dag ,\rho)$ such that
        \begin{equation*}
            \tilde{u}_n \to \tilde{u} \quad \text {in } U \quad \text{as }  n \to \infty.
        \end{equation*} 
\end{enumerate}
\end{definition}
We now show that the modified Landweber iteration \eqref{eq:modified-landweber} is asymptotically stable under the \crefrange{ass:cont}{ass:adj_bounded}. The proof consists of a sequence of technical lemmas. The first lemma verifies condition \ref{it:AS1} in \cref{def:AS}.
\begin{lemma} \label{lem:asym-stab1}
    Assume that \crefrange{ass:cont}{ass:adj_bounded} as well as \eqref{choice} hold. 
    Let the starting point $u_0 \in \overline  B_{U}(u^\dag, \rho)$ and the step sizes $w_n \in [\lambda, \Lambda]$ be arbitrary. Assume further that $\{\delta_{k}\}_{k \in \N}$ is a positive zero sequence. Then there exist a subsequence $\{\delta_{k_i} \}_{i \in \N}$  and a sequence $\{\tilde{u}_n \}_{n\in\N} \subset \overline  B_{U}(u^\dag, \rho)$ such that condition \ref{it:AS1} in \cref{def:AS} is fulfilled.

    Moreover, the sequence $\{\tilde{u}_n \}_{n\in\N}$ satisfies
    \begin{equation}\label{eq:Land-aux}
        \tilde{u}_0 = u_0, \quad \tilde{u}_{n+1} = \tilde{u}_n + w_n G_{\tilde{u}_n}^*(y^\dag - F(\tilde{u}_n)) + w_n r_n
    \end{equation} 
    for some $r_n\in Z$ and for all $0 \leq n < \overline N$, where $\overline N := \lim_{i \to \infty}N_{k_i}$.
\end{lemma}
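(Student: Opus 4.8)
The plan is to build the limit sequence $\{\tilde u_n\}$ by induction on $n$, extracting a further subsequence of $\{\delta_k\}$ at each stage and then combining them by a diagonal argument. First I would pin down the stopping indices: $\{N_{\delta_k}\}_{k\in\N}$ is a sequence in $\N$, so it either has an eventually constant subsequence or one diverging to $+\infty$; in both cases there is a subsequence (still denoted $\{\delta_{k_i}\}$) with $N_{\delta_{k_i}} \to \overline N \in \N\cup\{\infty\}$, and in the finite case I may assume $N_{\delta_{k_i}} = \overline N$ for all $i$. Since the starting point is fixed, setting $\tilde u_0 := u_0$ makes $u_0^{\delta_{k_i}} = u_0 \to \tilde u_0$ trivial, with $\tilde u_0 \in \overline B_U(u^\dag,\rho)$.

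For the inductive step, suppose along the current subsequence that $u_n^{\delta_{k_i}} \to \tilde u_n$ in $U$ with $\tilde u_n \in \overline B_U(u^\dag,\rho)$; recall that every iterate of index at most $N_{\delta_{k_i}}$ lies in this ball by \cref{lem:aux}, and the ball is closed under strong limits. By \cref{ass:cont}, $F(u_n^{\delta_{k_i}}) \to F(\tilde u_n)$ in $Y$, while $\norm{y^{\delta_{k_i}} - y^\dag}_Y \le \delta_{k_i} \to 0$, so the residuals converge to $v_n := y^\dag - F(\tilde u_n)$ in $Y$. Writing the iteration \eqref{eq:modified-landweber} as
\begin{equation*}
    u_{n+1}^{\delta_{k_i}} = u_n^{\delta_{k_i}} + w_n G_{u_n^{\delta_{k_i}}}^* v_n + w_n G_{u_n^{\delta_{k_i}}}^*\bigl[(y^{\delta_{k_i}} - F(u_n^{\delta_{k_i}})) - v_n\bigr],
\end{equation*}
the bound $\norm{G_u^*}_{\Linop(Y,Z)}\le\hat L$ from \cref{ass:adj_bounded} shows the last bracketed term tends to $0$ in $Z$, hence in $U$; so only the middle term $G_{u_n^{\delta_{k_i}}}^* v_n$, with the now $i$-independent argument $v_n$, remains to be controlled.

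This is exactly where the lack of continuity of $u\mapsto G_u$ enters: I cannot assert $G_{u_n^{\delta_{k_i}}}^* v_n \to G_{\tilde u_n}^* v_n$. Instead I would invoke \cref{ass:adj_bounded} once more: the sequence $\{G_{u_n^{\delta_{k_i}}}^* v_n\}_i$ is bounded in $Z$ by $\hat L\norm{v_n}_Y$, and since $Z$ embeds compactly into $U$, a further subsequence converges strongly in $U$; extracting once more via weak sequential compactness of bounded sets in $Z$, the limit $\xi_n$ lies in $Z$ (the strong $U$-limit and the weak $Z$-limit agree by continuity of the embedding). Passing to the limit in the displayed identity gives $u_{n+1}^{\delta_{k_i}} \to \tilde u_n + w_n\xi_n =: \tilde u_{n+1}$ strongly in $U$, and $\tilde u_{n+1}\in \overline B_U(u^\dag,\rho)$ since the iterates lie in this closed ball. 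Setting $r_n := \xi_n - G_{\tilde u_n}^*(y^\dag - F(\tilde u_n))$, both terms lie in $Z$ (the second by \cref{ass:adj_bounded}), so $r_n\in Z$, and by construction $\tilde u_{n+1} = \tilde u_n + w_n G_{\tilde u_n}^*(y^\dag - F(\tilde u_n)) + w_n r_n$, which is precisely \eqref{eq:Land-aux}.

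It then remains to organize the index bookkeeping. If $\overline N < \infty$, the induction runs through the finitely many steps $n = 0,\dots,\overline N-1$, producing one subsequence and the family $\{\tilde u_n\}_{0\le n\le \overline N}$ with \eqref{eq:Land-aux} for $0\le n<\overline N$; I would extend it to all of $\N$ by $\tilde u_n := \tilde u_{\overline N}$ for $n>\overline N$, which stays in the ball. If $\overline N = \infty$, I would carry out the extraction for every $n$ and take the diagonal subsequence, noting that for each fixed $n$ one eventually has $N_{\delta_{k_i}}>n$, so $u_n^{\delta_{k_i}}$ is defined and in $\overline B_U(u^\dag,\rho)$; this yields $\{\tilde u_n\}_{n\in\N}$ with \eqref{eq:Land-aux} for all $n\ge 0$. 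The main obstacle is the treatment of $G_{u_n^{\delta_{k_i}}}^* v_n$: without continuity of $u\mapsto G_u$ one secures only convergence to some $\xi_n$ differing from $G_{\tilde u_n}^* v_n$ by the remainder $r_n$, and it is precisely the compactness in \cref{ass:adj_bounded} that upgrades mere $Z$-boundedness into the strong $U$-convergence required to pass to the limit in the iteration.
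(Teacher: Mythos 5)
Your proposal is correct and follows essentially the same route as the paper's proof: the same induction with successive subsequence extraction, the same splitting of the update into a term with frozen residual $y^\dag - F(\tilde u_n)$ plus a perturbation that vanishes by the uniform operator bounds, the same use of the compact embedding $Z \hookrightarrow U$ from \cref{ass:adj_bounded} to upgrade $Z$-boundedness to strong $U$-convergence, and the same definition of $r_n$ as the gap between that limit and $G_{\tilde u_n}^*(y^\dag - F(\tilde u_n))$. The only deviations are cosmetic: you make explicit the diagonal argument and the finite-$\overline N$ bookkeeping that the paper compresses into ``proceeds similarly,'' and your justification that the limit lies in $Z$ tacitly invokes weak sequential compactness (i.e., reflexivity of $Z$, not assumed in \cref{ass:adj_bounded}) --- the very point the paper itself asserts without argument, and which is harmless in the application where $Z = H^1_0(\Omega)$.
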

\begin{proof}
    We first note that since $\{N_{k}\}_{k\in\N}$ is a sequence of natural numbers, there exists a subsequence $\{\delta_{k_i} \}_{i \in \N}$ such that $N_{k_i}$ either is constant for all $i$ large enough or tends increasingly to infinity as $i \to \infty$. 

    We now show by induction that there exist a sequence $\left\{\tilde{u}_n \right\}_{n\in\N} \subset \overline{B}_U(u^\dag ,\rho)$ and a subsequence  of $\{ \delta_{k_i} \}_{i \in \N}$, which fulfill the assertion of the lemma. To this end, we start with the case where $N_{k_i}$ tends increasingly to infinity as $i \to \infty$. In order to simplify the notation, we set $u_n^i := {u}_{n}^{\delta_{k_i}}$, $y_n^i := F({u}_{n}^{\delta_{k_i}})$, and $y^i :=  y^{\delta_{k_i}}$. 

    First, \eqref{asym-stab} holds for $n=0$ with $\tilde{u}_0 = u_0 \in \overline B_U(u^\dag ,\rho)$.
    By a slight abuse of notation, we assume $\{\delta_{k_i}\}_{i \in \N}$ itself is a subsequence satisfying  ${u}_{n}^{i} \to \tilde{u}_{n}$ as $i \to \infty$ for some $\tilde{u}_n \in \overline{B}_U(u^\dag , \rho)$. 
    Setting
    \begin{equation*}
        a_{n}^{i} := G_{{u}_{n}^i}^*(y^i - y_n^{i}), \qquad a_n := G_{\tilde{u}_{n}}^*(y^\dag - \tilde{y}_n), \qquad \zeta_n^i := a_n^i - a_n
    \end{equation*} 
    with $\tilde{y}_n := F(\tilde{u}_n)$,
    we have that
    \begin{equation*}
        \begin{aligned}
            \zeta_n^i & = G_{{u}_{n}^i}^*(y^i - y_n^{i}) - G_{\tilde{u}_{n}}^*(y^\dag - \tilde{y}_n) \\
                      & = \left[G_{{u}_{n}^i}^*(y^\dag - \tilde{y}_n) - G_{\tilde{u}_{n}}^*(y^\dag - \tilde{y}_n) \right] + G_{{u}_{n}^i}^*(y^i - y_n^{i} - y^\dag + \tilde{y}_n) \\
                      & = \eta_n^i - \eta_n + b_n^i 
        \end{aligned}
    \end{equation*}
    with
    \begin{align*}
        & \eta_n^i := G_{{u}_{n}^i}^*(y^\dag - \tilde{y}_n), \qquad \eta_n := a_n = G_{\tilde{u}_{n}}^*(y^\dag - \tilde{y}_n), \\
        & b_n^i := G_{{u}_{n}^i}^*(y^i - y_n^{i} - y^\dag + \tilde{y}_n). 
    \end{align*}
    \Cref{ass:cont} together with the fact $u_{n}^i \to \tilde{u}_n$ now implies that $y_n^i \to \tilde{y}_n$ as $i \to \infty$. From this and the boundedness of $\{ \| G_{{u}_{n}^i}^*\|_{\Linop(Y, U)} \}_{i \in \N}$ by \cref{ass:bouligand}, we obtain that
    \begin{equation}
        b_n^i \to 0 \quad \text{in $U$ as } i \to \infty. \label{eq:lim-bn}
    \end{equation}
    From \cref{ass:adj_bounded}, we further see that $\{\eta_n^i\}_{i \in \N}$ and hence $\{\eta_n^i - \eta_n\}_{i \in \N}$ is bounded in $Z$. Since $Z \hookrightarrow U$ compactly,  
    there exist an $r_n \in Z$ and a subsequence of $\{\delta_{k_i}\}_{k \in \N}$, denoted in the same way, such that
    \begin{equation}
        \label{eq:lim-eta}                      
        \eta_n^{i} - \eta_n \to r_n \quad \text {in $U$ as } i \to \infty.       
    \end{equation}
    Since
    \begin{equation*}
        \begin{aligned}
            u_{n+1}^i  &= u_n^i + w_n G_{u_n^i}^* \left( y^i - y_n^i  \right) \\
                       & = u_n^i + w_n a_n^{i} \\
                       & = u_n^i + w_n a_n + w_n (\eta_n^{i} - \eta_n) + w_n b_n^{i},
        \end{aligned}
    \end{equation*}
    letting $i \to \infty$ and using the limits \eqref{eq:lim-bn}, \eqref{eq:lim-eta}, and $u_n^i \to \tilde{u}_n$ implies that 
    \begin{equation*}
        u_{n+1}^i  \to \tilde{u}_n + w_na_n + w_n r_n = \tilde{u}_n + w_n G_{\tilde{u}_{n}}^*(y^\dag - \tilde{y}_n) + w_nr_n.
    \end{equation*}
    By setting $\tilde{u}_{n+1} := \tilde{u}_n + w_n G_{\tilde{u}_{n}}^*(y^\dag - \tilde{y}_n) + w_nr_n$, we obtain \eqref{asym-stab} for $n+1$ as well as \eqref{eq:Land-aux}. Since $u_{n+1}^i\in \overline B_{U}(u^\dag,\rho)$ for all $i\in\N$, also $\tilde{u}_{n+1} \in \overline  B_{U}(u^\dag, \rho)$. 

    The argument for the case where $\bar N < \infty$ proceeds similarly.
\end{proof}

%%%% Some properties of \tilde{u}_n and r_n

In order to verify condition \ref{it:AS2} in \cref{def:AS}, we need the following properties of sequences $\left\{\tilde{u}_n \right\}_{n\in\N}$ and $\left\{ r_n \right\}_{n\in\N}$.
\begin{lemma}  \label{lem:asym-stab2}
    Assume the conditions of \cref{lem:asym-stab1} hold. If the sequence $\left\{\delta_{k_i} \right\}_{i \in \N}$ in \cref{lem:asym-stab1} satisfies $N_{k_i} \to \infty$ as $i \to \infty$, then the sequences $\left\{ \tilde{u}_n \right\}_{n \in \N}$ and $\left\{ r_n \right\}_{n \in \N}$ given in \eqref{eq:Land-aux} satisfy for all $n\in\N$ the following estimates:
    \begin{enumerate}[label=(\roman*)]
        \item\label{it:asym-stab1} $\| r_n \|_{U} \leq 2L \| y^\dag - \tilde{y}_n \|_{Y}$,
        \item\label{it:asym-stab2} $\left( r_n, \tilde{u}_n - \tilde{u} \right)_{U} \leq (-1 + \mu) \| y^\dag - \tilde{y}_n \|_{Y}^2 - \left( y^\dag - \tilde{y}_n, G_{\tilde{u}_n} (\tilde{u}_n - \tilde{u}) \right)_{Y}$,
        \item\label{it:asym-stab3} $\left| \left( r_n, \tilde{u}_m - \tilde{u} \right)_{U}\right |\leq 2(1+\mu)\|y^\dag - \tilde{y}_n \|_{Y} \left[ \|y^\dag - \tilde{y}_n \|_{Y} +  \|\tilde{y}_m - \tilde{y}_n \|_{Y} \right]$
            for all $m \geq 0$,
    \end{enumerate} 
    for $\tilde{y}_n := F(\tilde{u}_n)$, any $\tilde{u} \in S(u^\dag ,\rho)$, and  $L>0$ from \cref{ass:bouligand}.      
\end{lemma}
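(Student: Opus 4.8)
The plan is to exploit the three structural facts available along the subsequence selected in \cref{lem:asym-stab1}, keeping its abbreviations $u_n^i := u_n^{\delta_{k_i}}$, $y_n^i := F(u_n^i)$, $\eta_n^i := G_{u_n^i}^*(y^\dag - \tilde y_n)$, and $\eta_n := G_{\tilde u_n}^*(y^\dag - \tilde y_n)$: namely the strong convergences $u_n^i \to \tilde u_n$ in $U$ and $y_n^i \to \tilde y_n$ in $Y$ (the latter by \cref{ass:cont}), the defining limit $\eta_n^i - \eta_n \to r_n$ in $U$ (so that $\eta_n^i \to \eta_n + r_n$ strongly), and the uniform bound $\|G_u^*\|_{\Linop(Y,U)} = \|G_u\|_{\Linop(U,Y)} \le L$ from \cref{ass:bouligand}. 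Assertion \ref{it:asym-stab1} then follows immediately: for every $i$ one has $\|\eta_n^i\|_U, \|\eta_n\|_U \le L\|y^\dag - \tilde y_n\|_Y$, hence $\|\eta_n^i - \eta_n\|_U \le 2L\|y^\dag - \tilde y_n\|_Y$, and passing to the limit preserves this bound for $r_n$.

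For \ref{it:asym-stab2} I would compute the pairing at the finite level $i$, move the adjoint onto $G_{u_n^i}$, and split $\tilde u_n - \tilde u = (\tilde u_n - u_n^i) + (u_n^i - \tilde u)$, giving $(\eta_n^i, \tilde u_n - \tilde u)_U = (y^\dag - \tilde y_n, G_{u_n^i}(\tilde u_n - u_n^i))_Y + (y^\dag - \tilde y_n, G_{u_n^i}(u_n^i - \tilde u))_Y$. The first summand is bounded by $\|y^\dag - \tilde y_n\|_Y\, L \,\|\tilde u_n - u_n^i\|_U \to 0$. For the second, \eqref{eq:GTCC} with $\hat u = \tilde u$ and $u = u_n^i$ (using $F(\tilde u) = y^\dag$) yields $G_{u_n^i}(u_n^i - \tilde u) = -(y^\dag - y_n^i) + e_n^i$ with $\|e_n^i\|_Y \le \mu\|y^\dag - y_n^i\|_Y$, so this term tends to $-\|y^\dag - \tilde y_n\|_Y^2 + E_n$ where $|E_n| \le \mu\|y^\dag - \tilde y_n\|_Y^2$. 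Since the left-hand side converges to $(\eta_n + r_n, \tilde u_n - \tilde u)_U$, subtracting $(\eta_n, \tilde u_n - \tilde u)_U = (y^\dag - \tilde y_n, G_{\tilde u_n}(\tilde u_n - \tilde u))_Y$ and using the sign of $E_n$ gives \ref{it:asym-stab2}.

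For \ref{it:asym-stab3} I would bound $(r_n, \tilde u_m - \tilde u)_U = \lim_i (\eta_n^i - \eta_n, \tilde u_m - \tilde u)_U$ by estimating the contributions of $\eta_n^i$ and of $\eta_n$ separately, each via Cauchy--Schwarz against $y^\dag - \tilde y_n$. Splitting $\tilde u_m - \tilde u = (\tilde u_m - u_n^i) + (u_n^i - \tilde u)$ and applying \eqref{eq:GTCC} at the base point $u_n^i$ gives $\|G_{u_n^i}(\tilde u_m - u_n^i)\|_Y \le (1+\mu)\|\tilde y_m - y_n^i\|_Y$ and $\|G_{u_n^i}(u_n^i - \tilde u)\|_Y \le (1+\mu)\|y^\dag - y_n^i\|_Y$; the analogous decomposition of $G_{\tilde u_n}(\tilde u_m - \tilde u)$ at the base point $\tilde u_n$, legitimate since $\tilde u_n, \tilde u_m, \tilde u \in \overline B_U(u^\dag, \rho)$, controls the $\eta_n$ contribution by the same quantities. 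Letting $i \to \infty$ (so that $y_n^i \to \tilde y_n$) and summing the two contributions produces the factor $2(1+\mu)$ and the asserted bound.

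The genuine obstacle throughout is that $u \mapsto G_u$ is not continuous, so one cannot simply replace $G_{u_n^i}$ by $G_{\tilde u_n}$ in the limit; in fact $r_n$ is exactly the defect encoding this failure. The resolution, and the only delicate point, is that \eqref{eq:GTCC} holds uniformly on all of $\overline B_U(u^\dag, \rho)$, which lets me trade each occurrence of $G_{u_n^i}$ acting on a difference $(\cdot - u_n^i)$ for an $F$-residual that does converge by \cref{ass:cont}. Some care is needed to ensure that the scalar limit $E_n$ in \ref{it:asym-stab2} exists rather than holding only along a further subsequence, but this is automatic here because the remaining terms, and hence the left-hand side, converge by the construction of $r_n$.
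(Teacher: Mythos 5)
Your proof is correct and takes essentially the same route as the paper's: both pass to the limit in $\left(\eta_n^i - \eta_n, \cdot\right)_U$ using the uniform bound from \cref{ass:bouligand}, the condition \eqref{eq:GTCC} at the base point $u_n^i$ with $\hat u = \tilde u$ (resp.\ $\hat u = \tilde u_m$) to trade $G$-terms for $F$-residuals, and the strong convergences $u_n^i \to \tilde u_n$, $y_n^i \to \tilde y_n$, including the same resolution of the existence-of-limit issue via the convergence of the remaining terms. The only cosmetic difference is in assertion \ref{it:asym-stab2}, where you split the argument $\tilde u_n - \tilde u = (\tilde u_n - u_n^i) + (u_n^i - \tilde u)$ while the paper first splits the data term $y^\dag - \tilde y_n = (y^\dag - y_n^i) + (y_n^i - \tilde y_n)$; both decompositions isolate the same linearization error and yield identical bounds.
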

\begin{proof}
    We employ the same notation as in the proof of \cref{lem:asym-stab1}. For \ref{it:asym-stab1}, we obtain from \cref{ass:bouligand} that
    \begin{equation*}
        \|\eta_n^{i} - \eta_n \|_{U} = \|  G_{{u}_{n}^i}^*(y^\dag - \tilde{y}_n) - G_{\tilde{u}_{n}}^*(y^\dag - \tilde{y}_n) \|_{U} \leq 2L \| y^\dag - \tilde{y}_n \|_{Y}. 
    \end{equation*}
    Combining this with \eqref{eq:lim-eta} yields that 
    \begin{equation*}
        \| r_n \|_{U} = \lim_{i \to \infty} \|\eta_n^{i} - \eta_n\|_{U}   \leq 2L \| y^\dag - \tilde{y}_n \|_{Y}, 
    \end{equation*}
    which gives assertion \ref{it:asym-stab1}.

    For \ref{it:asym-stab2}, let $\tilde{u}\in S(u^\dag,\rho)$ be arbitrary. 
    We then see from \eqref{eq:lim-eta} that 
    \begin{equation}\label{rn1}
        \begin{aligned}[t]
            \left( r_n, \tilde{u}_n - \tilde{u} \right)_{U} & = \lim_{i \to \infty}      \left( \eta_n^{i} - \eta_n, \tilde{u}_n - \tilde{u} \right)_{U} \\
                                                            & = \lim_{i \to \infty}      \left( y^\dag - \tilde{y}_n, G_{u_n^i}( \tilde{u}_n - \tilde{u}) \right)_{Y} -  \left( y^\dag - \tilde{y}_n, G_{\tilde{u}_n}( \tilde{u}_n - \tilde{u})\right)_{Y}\\
                                                            & = \lim_{i \to \infty} A_n^{i} - B_n 
        \end{aligned}
    \end{equation}
    with 
    \begin{align*}
        A_n^{i} &:= \left( y^\dag - \tilde{y}_n, G_{u_n^i }( \tilde{u}_n -  \tilde{u}) \right)_{Y}, \\
        B_n&:= \left( y^\dag -\tilde{y}_n, G_{\tilde{u}_n}( \tilde{u}_n - \tilde{u})\right)_{Y}.                         
    \end{align*}
    Moreover, 
    \begin{equation*}
        \begin{aligned}
            A_n^{i} & = \left( y^\dag - {y}_n^{i}, G_{u_n^i}( \tilde{u}_n - \tilde{u}) \right)_{Y} + \left( y_n^{i} - \tilde{y}_n, G_{u_n^i}( \tilde{u}_n - \tilde{u}) \right)_{Y} \\
                    & = \left( y^\dag - {y}_n^{i},y^\dag - {y}_n^{i} - G_{u_n^i}( \tilde{u} - \tilde{u}_n)\right)_{Y}\\
                    &\quad\quad - \|y^\dag - {y}_n^{i} \|_{Y}^2 + \left( y_n^{i} - \tilde{y}_n, G_{u_n^i}( \tilde{u}_n - \tilde{u}) \right)_{Y} \\
                    & = \left( y^\dag - {y}_n^{i},y^\dag - {y}_n^{i} - G_{u_n^i}( \tilde{u} - u_n^i)\right)_{Y} - \left( y^\dag - {y}_n^{i}, G_{u_n^i}( u_n^i - \tilde{u}_n)\right)_{Y} \\
                    & \quad \quad - \|y^\dag - {y}_n^{i} \|_{Y}^2 + \left( y_n^{i} - \tilde{y}_n, G_{u_n^i}( \tilde{u}_n - \tilde{u}) \right)_{Y} \\
                    & \leq (-1+\mu) \|y^\dag - {y}_n^{i} \|_{Y}^2 +L \|y^\dag - {y}_n^{i} \|_{Y} \|u_n^i - \tilde{u}_n \|_{U} \\
                    & \quad \quad + L \| y_n^{i} - \tilde{y}_n \|_{Y} \|\tilde{u}_n - \tilde{u}\|_{U}, 
        \end{aligned}
    \end{equation*}             
    where the last inequality follows from \cref{ass:gtcc,ass:bouligand} together with the Cauchy--Schwarz inequality.
    Letting $i \to \infty$, we have that $u_n^i \to \tilde{u}_n$ and $y_n^{i} \to \tilde{y}_n$, and hence
    \begin{equation*}
        \lim_{i \to \infty}      A_n^{i} \leq (-1 + \mu ) \|y^\dag - \tilde{y}_n \|_{Y}^2. 
    \end{equation*} 
    From this and \eqref{rn1}, we obtain assertion \ref{it:asym-stab2}. 

    For assertion \ref{it:asym-stab3}, we first estimate 
    \begin{equation*}
        \begin{aligned}
            \left| \left( r_n, \tilde{u}_m - \tilde{u} \right)_{U} \right | & = \lim_{i \to \infty} \left| \left( \eta_n^{i} - \eta_n, \tilde{u}_m - \tilde{u} \right)_{U} \right| \\
                                                                            & = \lim_{i \to \infty} \left| \left( y^\dag - \tilde{y}_n, G_{u_n^i}( \tilde{u}_m - \tilde{u}) \right)_{Y} - \left( y^\dag - \tilde{y}_n, G_{\tilde{u}_n}( \tilde{u}_m - \tilde{u}) \right)_{Y} \right| \\
                                                                            & \leq \|y^\dag - \tilde{y}_n \|_{Y} \left[ \limsup_{i \to \infty} \|G_{u_n^i}( \tilde{u}_m - \tilde{u}) \|_{Y} + \|G_{\tilde{u}_n}( \tilde{u}_m - \tilde{u}) \|_{Y}\right]. 
        \end{aligned}
    \end{equation*}
    Due to \cref{ass:gtcc}, we can apply the \eqref{eq:GTCC} to obtain
    \begin{equation*}
        \begin{aligned}
            \|G_{u_n^i}( \tilde{u}_m - \tilde{u}) \|_{Y} & \leq \|G_{u_n^i}( u_n^i - \tilde{u}) \|_{Y} + \|G_{u_n^i}( \tilde{u}_m - u_n^i) \|_{Y} \\
                                                         & \leq (1+\mu) \|y^\dag - y_n^{i} \|_{Y} + (1+ \mu) \|\tilde{y}_m - y_n^{i} \|_{Y} \\
                                                         & = (1+\mu) \left[ \|y^\dag - y_n^{i} \|_{Y} +  \|\tilde{y}_m - y_n^{i} \|_{Y} \right],
        \end{aligned}
    \end{equation*} 
    which implies that 
    \begin{equation*}
        \limsup_{i \to \infty} \|G_{u_n^i}( \tilde{u}_m - \tilde{u}) \|_{Y} \leq (1+\mu) \left[\|y^\dag - \tilde{y}_n \|_{Y} +  \|\tilde{y}_m - \tilde{y}_n \|_{Y}\right].
    \end{equation*} 
    Also, \eqref{eq:GTCC} yields that
    \begin{equation*}
        \|G_{\tilde{u}_n}( \tilde{u}_m - \tilde{u}) \|_{Y} \leq (1+\mu) \left[\|y^\dag - \tilde{y}_n \|_{Y} +  \|\tilde{y}_m - \tilde{y}_n \|_{Y}\right].
    \end{equation*}                             
    From the above inequalities, we obtain that
    \begin{equation*}
        \begin{aligned}
            &\left| \left( r_n, \tilde{u}_m - \tilde{u} \right)_{U} \right |  \leq 2(1+\mu)\|y^\dag - \tilde{y}_n \|_{Y} \left[ \|y^\dag - \tilde{y}_n \|_{Y} +  \|\tilde{y}_m - \tilde{y}_n \|_{Y} \right], 
        \end{aligned}
    \end{equation*} 
    which yields \ref{it:asym-stab3}.
\end{proof}

%%%% The convergence of the auxiliary sequence. Check (ii) of Def. 3.1

The following lemma completes the verification of condition \ref{it:AS2} in \cref{def:AS}. Its proof is a modification of the proof of \cref{thm:free}.      
\begin{lemma} \label{lem:conv-aux}
    Assume that \crefrange{ass:cont}{ass:adj_bounded} hold. Let further $\lambda$ and $\Lambda$  satisfy  $\Lambda \geq \lambda >0$ and
    \begin{equation}\label{choice-aux}
        -1 + \mu + 5\Lambda L^2  < 0. 
    \end{equation}
    Let the starting point $u_0 \in \overline  B_{U}(u^\dag, \rho)$ and the step sizes $w_n \in [\lambda, \Lambda]$ be arbitrary. Assume furthermore that $\{\tilde{u}_{n}\}_{n \in \N}$ is defined by \eqref{eq:Land-aux} and satisfies conditions \ref{it:asym-stab1}--\ref{it:asym-stab3} of \cref{lem:asym-stab2}. Then $\{\tilde{u}_{n}\}_{n \in \N}$ converges strongly to some $\bar u\in S(u^\dag ,\rho)$ as $n \to \infty$.
\end{lemma}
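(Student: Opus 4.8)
The plan is to mirror the proof of \cref{thm:free}, treating the recursion \eqref{eq:Land-aux} as the noise-free modified Landweber iteration perturbed by the additional term $w_n r_n$. Accordingly, I would first establish the two ingredients underlying the three-point Cauchy argument, namely a monotone decrease of $\|\tilde u_n - u^\dag\|_U$ and the summability $\sum_{n}\|y^\dag - \tilde y_n\|_Y^2 < \infty$ (the analogue of \cref{lem:RW}), and then reproduce the Cauchy argument of \cref{thm:free} with the extra $r_n$-contributions controlled by the estimates of \cref{lem:asym-stab2}. Throughout, recall from \cref{lem:asym-stab1} that $\tilde u_n \in \overline B_U(u^\dag,\rho)$ for all $n$, so that all assumptions and estimates are applicable.

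For the monotonicity and summability, fix any $\tilde u \in S(u^\dag,\rho)$ and expand, exactly as in \eqref{eq:esti0},
\[
    \|\tilde u_{n+1} - \tilde u\|_U^2 - \|\tilde u_n - \tilde u\|_U^2 = 2w_n\bigl(G_{\tilde u_n}(\tilde u_n - \tilde u), y^\dag - \tilde y_n\bigr)_Y + 2w_n(r_n, \tilde u_n - \tilde u)_U + \|\tilde u_{n+1} - \tilde u_n\|_U^2.
\]
The key observation is that inserting estimate \ref{it:asym-stab2} of \cref{lem:asym-stab2} makes the two inner products involving $G_{\tilde u_n}$ cancel, leaving only $2w_n(-1+\mu)\|y^\dag - \tilde y_n\|_Y^2$. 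For the quadratic term I would use $\|a+b\|_U^2 \le 2\|a\|_U^2 + 2\|b\|_U^2$ together with \cref{ass:bouligand} and estimate \ref{it:asym-stab1}, which bound $\|\tilde u_{n+1} - \tilde u_n\|_U^2$ by $10 L^2 w_n^2\|y^\dag - \tilde y_n\|_Y^2$. Collecting terms and using $w_n \in [\lambda,\Lambda]$ then yields
\[
    \|\tilde u_{n+1} - \tilde u\|_U^2 - \|\tilde u_n - \tilde u\|_U^2 \le 2\lambda\bigl(-1 + \mu + 5\Lambda L^2\bigr)\|y^\dag - \tilde y_n\|_Y^2,
\]
which is non-positive by \eqref{choice-aux}; taking $\tilde u = u^\dag$ gives the monotone decrease, and telescoping gives the desired summability. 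In particular $\|\tilde u_n - u^\dag\|_U$ converges to some $\gamma \ge 0$ and $\|y^\dag - \tilde y_n\|_Y \to 0$.

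With these two facts in hand, I would run the argument of \cref{thm:free} with $e_n := u^\dag - \tilde u_n$: for $m \le l$ pick $k \in \arg\min_{m\le t\le l}\|y^\dag - \tilde y_t\|_Y$, split $\|\tilde u_m - \tilde u_l\|_U^2 \le a_{m,l,k} + b_{m,l,k}$ via the three-point identity, and note $a_{m,l,k}\to 0$ since $\|e_n\|_U \to \gamma$. For $b_{m,l,k}$ I would telescope $(e_k - e_m, e_k)_U = \sum_{n=m}^{k-1}(e_{n+1}-e_n, e_k)_U$, where now $e_{n+1}-e_n = -w_n[G_{\tilde u_n}^*(y^\dag - \tilde y_n) + r_n]$ contributes two pieces. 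The $G_{\tilde u_n}^*$-piece is handled exactly as in \eqref{eq:ine-46-free}, yielding a bound $3(1+\mu)w_n\|y^\dag - \tilde y_n\|_Y^2$ after using the minimality of $k$ to replace $\|y^\dag - \tilde y_k\|_Y$ by $\|y^\dag - \tilde y_n\|_Y$. The $r_n$-piece is bounded by estimate \ref{it:asym-stab3} of \cref{lem:asym-stab2} (with generic index equal to $k$ and $\tilde u = u^\dag$), and again the minimality of $k$ together with the triangle inequality turns $\|\tilde y_k - \tilde y_n\|_Y$ into a multiple of $\|y^\dag - \tilde y_n\|_Y$, giving a bound $6(1+\mu)w_n\|y^\dag - \tilde y_n\|_Y^2$. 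Summing and using $w_n \le \Lambda$, both contributions are controlled by $\Lambda \sum_{n=m}^{l-1}\|y^\dag - \tilde y_n\|_Y^2$, so $b_{m,l,k}\to 0$ by summability and $\{\tilde u_n\}_{n\in\N}$ is Cauchy. Its limit $\bar u$ satisfies $F(\bar u) = y^\dag$ by \cref{ass:cont} combined with $\|y^\dag - \tilde y_n\|_Y \to 0$, and lies in $\overline B_U(u^\dag,\rho)$ since every $\tilde u_n$ does; hence $\bar u \in S(u^\dag,\rho)$.

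The genuinely new point relative to \cref{thm:free} is the presence of the perturbation $w_n r_n$, and the entire argument hinges on the specific structure of \cref{lem:asym-stab2}: estimate \ref{it:asym-stab2} is tailored precisely so that its $G_{\tilde u_n}$-term cancels the otherwise sign-indefinite linear term in the monotonicity computation, while estimate \ref{it:asym-stab3} provides exactly the quadratic-in-residual bound needed to absorb $r_n$ into the summable series. I expect the main technical care to lie in the Cauchy step, where one must repeatedly exploit the minimality of $k$ to express every stray residual $\|y^\dag - \tilde y_k\|_Y$ and every difference $\|\tilde y_k - \tilde y_n\|_Y$ in terms of $\|y^\dag - \tilde y_n\|_Y$ alone, so that all bounds reduce to the single summable quantity.
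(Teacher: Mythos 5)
Your proposal is correct and follows essentially the same route as the paper's own proof: the same expansion with the exact cancellation produced by estimate \ref{it:asym-stab2} of \cref{lem:asym-stab2}, the same $\|a+b\|_U^2 \leq 2\|a\|_U^2 + 2\|b\|_U^2$ bound giving the factor $10 L^2 w_n^2$ and hence \eqref{choice-aux}, and the same three-point Cauchy argument with the $G^*$-piece bounded as in \eqref{eq:ine-46-free} and the $r_n$-piece absorbed via estimate \ref{it:asym-stab3} evaluated at the minimizing index $k$ with $\tilde u = u^\dag$. The only (immaterial) difference is that you run the monotonicity estimate for a general $\tilde u \in S(u^\dag,\rho)$ before specializing to $u^\dag$, whereas the paper takes $\tilde u = u^\dag$ from the start.
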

\begin{proof} 
    From \eqref{eq:Land-aux}, assertions \ref{it:asym-stab1} and \ref{it:asym-stab2} of \cref{lem:asym-stab2} for the case where $\tilde{u} := u^\dag$, and the Cauchy--Schwarz inequality, we have that
    \begin{equation}\label{bound-aux1}
        \begin{multlined}[t][0.9\linewidth]
            \| \tilde u_{n+1} - u^\dag \|_{U}^2 - \|\tilde u_{n} - u^\dag \|_{U}^2 \\
            \begin{aligned}[t]
                & = 2 \left( \tilde u_n - u^\dag, \tilde u_{n+1} - \tilde u_n \right)_{U} + \|\tilde u_{n+1} -\tilde u_n\|_{U}^2 \\
                & = 2 w_n \left(G_{\tilde u_n} (\tilde u_n - u^\dag), y^\dag - \tilde{y}_n \right)_{Y} +2 w_n \left(r_n, \tilde u_n - u^\dag \right)_{U} + \|\tilde u_{n+1} -\tilde u_n\|_{U}^2 \\
                & \leq 2w_n(-1+ \mu)\|y^\dag - \tilde{y}_n \|_{Y}^2 + w_n^2\| G_{\tilde{u}_n}^*(y^\dag - \tilde{y}_n) + r_n \|_{U}^2 \\
                & \leq 2w_n(-1+ \mu)\|y^\dag - \tilde{y}_n \|_{Y}^2 + 2w_n^2\| G_{\tilde{u}_n}^*(y^\dag - \tilde{y}_n) \|_{U}^2+ 2w_n^2\| r_n \|_{U}^2 \\
                & \leq 2w_n(-1+ \mu)\|y^\dag - \tilde{y}_n \|_{Y} ^2 + 2w_n^2L^2\| y^\dag - \tilde{y}_n \|_{Y}^2 + 8w_n^2 L^2  \| y^\dag - \tilde{y}_n \|_{Y}^2  \\
                &\leq 2w_n \|y^\dag - \tilde{y}_n \|_{Y} ^2 \left[ -1 + \mu + 5\Lambda L^2 \right] 
            \end{aligned}
        \end{multlined}
    \end{equation} 
    for all $n \geq 0$. Consequently, 
    \begin{equation} \label{sum-yn}
        \sum_{n \geq 0} \| y^\dag - \tilde{y}_n \|_{Y}^2 \leq \frac{1}{2\lambda \left(1- \mu - 5 \Lambda L^2\right)} \| u_{0} - u^\dag \|_{U}^2 <\infty.
    \end{equation}
    The inequality \eqref{bound-aux1} also yields that $\{\|\tilde e_n \|_{U}\}_{n \in \N}$ with $\tilde e_n := u^\dag -\tilde u_n$ is monotonically decreasing, and hence $\lim_{n \to \infty} \|\tilde e_n \|_{U} = {\tilde \gamma}$
    for some $\tilde{\gamma} \geq 0$. 

    For any $m,l \in \N$ with $m \leq l$, we now choose
    \begin{equation} \label{choice-r}
        k \in \arg\min_{m\leq t\leq l}\| y^\dag - \tilde y_t \|_{Y}.
    \end{equation}
    As in \eqref{eq:ine-43-free}, it holds that
    \begin{equation}\label{eq:ine-43}
        \begin{aligned}[t]
            \|\tilde u_m -\tilde u_l \|_{U}^2 & \leq  \tilde a_{m,l,k} + \tilde b_{m,l,k} 
        \end{aligned}
    \end{equation}
    with
    \begin{align} \label{eq:lim-431}
        \tilde a_{m,l,k} &:= 2 \left[\|\tilde e_m \|_{U}^2 + \|\tilde e_l \|_{U}^2 - 2\|\tilde e_k \|_{U}^2 \right] \to 0 \quad \text{as } l\geq k\geq m \to \infty 
        \shortintertext{and}
        \tilde b_{m,l,k} &:= 4\left(\tilde e_k-\tilde e_m, \tilde e_k \right)_{U} + 4 \left(\tilde e_k -\tilde e_l, \tilde e_k \right)_{U}.     
        \nonumber
    \end{align}
    Furthermore,
    \begin{equation}\label{eq:ine-44}
        \left(\tilde e_{k}-\tilde e_m, \tilde e_k \right)_{U}  = \sum_{n=m}^{k-1} \left(\tilde e_{n+1}-\tilde e_n, \tilde e_k \right)_{U}        \leq \sum_{n=m}^{k-1} |\left(\tilde e_{n+1}-\tilde e_n, \tilde e_k \right)_{U}|. 
    \end{equation}
    From \eqref{eq:Land-aux}, we obtain $\tilde e_{n+1} - \tilde e_{n} = -w_n G_{\tilde u_n}^*(y^\dag - \tilde y_n) - w_n r_n$ and hence
    \begin{equation*}
        \begin{aligned}
            \left(\tilde e_{n+1}-\tilde e_n, \tilde e_k \right)_{U} & = -w_n\left(y^\dag - \tilde y_n,G_{\tilde u_n} \tilde e_k \right)_{Y} - w_n\left(r_n, \tilde e_k \right)_{U} \\
                                                                    & = w_n\left(y^\dag - \tilde y_n,G_{\tilde u_n} (\tilde u_k - u^\dag) \right)_{Y} + w_n\left(r_n, \tilde u_k - u^\dag \right)_{U}. 
        \end{aligned}
    \end{equation*}
    It follows that 
    \begin{equation} \label{eq:ine-45}
        | \left(\tilde e_{n+1}-\tilde e_n, \tilde e_k \right)_{U} | \leq w_n \| y^\dag - \tilde y_n \|_{Y} \| G_{\tilde u_n} (\tilde u_k - u^\dag)\|_{Y}+ w_n \left| \left(r_n, \tilde u_k - u^\dag \right)_{U}\right|,
    \end{equation}
    and proceeding as in the proof of estimate \eqref{eq:ine-46-free} shows that
    \begin{equation}
        \label{eq:ine-46} 
        \| G_{\tilde u_n}(\tilde u_k - u^\dag ) \|_{Y} \leq 3(1+ \mu) \| y^\dag - \tilde y_n \|_{Y}.
    \end{equation} 
    On the other hand, assertion \ref{it:asym-stab3} of \cref{lem:asym-stab2} implies that
    \begin{equation*}
        \begin{aligned}[t]
            \left| \left( r_n, \tilde{u}_k - u^\dag \right)_{U} \right | 
            & \leq 2(1+\mu)\|y^\dag - \tilde{y}_n \|_{Y} \left[\|y^\dag - \tilde{y}_n \|_{Y} +  \|\tilde{y}_k - \tilde{y}_n \|_{Y} \right] \\
            & \leq 2(1+\mu)\|y^\dag - \tilde{y}_n \|_{Y} \left[\|y^\dag - \tilde{y}_n \|_{Y} \right.\\
            \MoveEqLeft[-9] \left. + \|\tilde{y}_k - y^\dag\|_{Y} +\|\tilde{y}_n - y^\dag\| _{Y} \right] \\
            & \leq 6(1+\mu) \|y^\dag - \tilde{y}_n \|_{Y}^2 .
        \end{aligned}
    \end{equation*}
    In combination with \eqref{eq:ine-45} and \eqref{eq:ine-46}, we obtain that
    \begin{equation*}
        | \left(\tilde e_{n+1}-\tilde e_n, \tilde e_k \right)_{U} | \leq 9(1+\mu) w_n \| y^\dag - \tilde y_n \|_{Y}^2, 
    \end{equation*}
    which together with \eqref{eq:ine-44} ensures that 
    \begin{equation*}
        | \left(\tilde e_k - \tilde e_m, \tilde e_k \right)_{U} |  \leq 9(1+\mu) \sum_{n=m}^{k-1} w_n{\| y^\dag - \tilde y_n \|_{Y}^2} 
        \leq 9(1+\mu) \Lambda \sum_{n=m}^{k-1}{\| y^\dag - \tilde y_n \|_{Y}^2}.
    \end{equation*}
    Similarly, 
    \begin{equation*}
        | \left(\tilde e_k - \tilde e_l, \tilde e_k \right)_{U} | \leq 9(1+\mu) \Lambda \sum_{n=k}^{l-1}{\| y^\dag - \tilde y_n \|_{Y}^2}. 
    \end{equation*}
    We therefore obtain that 
    \begin{equation*}
        \tilde b_{m,l,k} = 4\left(\tilde e_k-\tilde e_m, \tilde e_k \right)_{U} + 4 \left(\tilde e_k -\tilde e_l, \tilde e_k \right)_{U} 
        \leq 36(1+\mu) \Lambda \sum_{n=m}^{l-1}{\| y^\dag - \tilde y_n \|_{Y}^2},
    \end{equation*}
    which together with \eqref{sum-yn} yields that
    \begin{equation}
        \label{eq:lim-432} 
        \tilde b_{m,l,k} \to 0 \quad \text{as } l\geq k\geq m \to \infty. 
    \end{equation}
    From \eqref{eq:lim-431}, \eqref{eq:lim-432}, and \eqref{eq:ine-43}, we now obtain that $\{\tilde u_n\}_{n \in \N}$ is a Cauchy sequence in $U$.
    Thus, there exists a $\bar u \in \overline  B_{U}(u^\dag, \rho)$ such that $\tilde u_n \to \bar u$ and thus $F(\tilde u_n) \to F(\bar u)$ by \cref{ass:cont} as $n \to \infty$.
    Now \eqref{sum-yn} implies that $y^\dag - F(\tilde u_n) \to 0$ as $n \to \infty$. Hence, $y^\dag = F(\bar u)$ and therefore $\bar u \in S(u^\dag , \rho)$, which completes the proof.
\end{proof}

We have thus shown the following result.
\begin{corollary}\label{cor:asy-stable}
    Under \crefrange{ass:cont}{ass:adj_bounded}, the modified Landweber iteration \eqref{eq:modified-landweber} stopped according to the discrepancy principle \eqref{eq:discrepancy} for $\tau>1$ 
    is asymptotically stable for any starting point $u^0\in \overline B_U(u^\dag,\rho)$ and any step sizes $\{w_n\}_{n\in\N}\subset [\lambda,\Lambda]$ for  $\Lambda\geq \lambda > 0$ satisfying \eqref{choice} as well as \eqref{choice-aux}.
\end{corollary}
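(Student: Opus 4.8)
The plan is to obtain \Cref{cor:asy-stable} by assembling the three preceding lemmas, which were engineered precisely to verify, in turn, the two defining conditions of asymptotic stability in \cref{def:AS}. Accordingly, I would fix an arbitrary positive zero sequence $\{\delta_k\}_{k\in\N}$ and then exhibit the required subsequence $\{\delta_{k_i}\}_{i\in\N}$, the limit index $\overline N$, and the limiting iterates $\{\tilde u_n\}$ satisfying conditions \ref{it:AS1} and \ref{it:AS2}.

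First I would invoke \cref{lem:asym-stab1}: since \crefrange{ass:cont}{ass:adj_bounded} and \eqref{choice} are assumed, it produces a subsequence $\{\delta_{k_i}\}_{i\in\N}$ along which $N_{k_i}$ is either eventually constant or increases to infinity, together with a sequence $\{\tilde u_n\}_{n\in\N}\subset \overline B_U(u^\dag,\rho)$ for which the strong limits \eqref{asym-stab} hold and which obeys the recursion \eqref{eq:Land-aux}. This is exactly condition \ref{it:AS1} of \cref{def:AS}, so that only condition \ref{it:AS2} remains to be checked. Setting $\overline N:=\lim_{i\to\infty}N_{k_i}\in\N\cup\{\infty\}$, I would then distinguish the two cases of this dichotomy.

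If $\overline N<\infty$, condition \ref{it:AS2} is vacuous and nothing further is needed. If $\overline N=\infty$, then \cref{lem:asym-stab2} applies to the sequences $\{\tilde u_n\}$ and $\{r_n\}$ from \eqref{eq:Land-aux} and yields the estimates \ref{it:asym-stab1}--\ref{it:asym-stab3}; feeding these into \cref{lem:conv-aux}, whose hypothesis \eqref{choice-aux} is assumed, shows that $\{\tilde u_n\}$ converges strongly to some $\bar u\in S(u^\dag,\rho)$, which is precisely condition \ref{it:AS2}. Combining both cases establishes asymptotic stability. I expect no genuine obstacle here, as the substantive work is carried out in the lemmas; the only point requiring care is the bookkeeping of hypotheses, namely that \eqref{choice} is what \cref{lem:asym-stab1,lem:asym-stab2} require whereas \eqref{choice-aux} is what \cref{lem:conv-aux} requires, so that assuming both simultaneously lets all three lemmas act on the same extracted subsequence and the same limiting sequence $\{\tilde u_n\}$.
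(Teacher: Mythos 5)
Your proposal is correct and follows exactly the route the paper intends: the paper offers no separate proof of \cref{cor:asy-stable} beyond the remark ``We have thus shown the following result,'' since \cref{lem:asym-stab1} establishes condition \ref{it:AS1} of \cref{def:AS} under \eqref{choice}, and \cref{lem:asym-stab2,lem:conv-aux} establish condition \ref{it:AS2} under \eqref{choice-aux} in the case $\overline N=\infty$ (the finite case being vacuous). Your assembly of the lemmas, including the case distinction on $\overline N$ and the hypothesis bookkeeping, is precisely the paper's argument.
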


%%%% Main result        
We are now well prepared to prove our main result.
\begin{theorem}\label{thm:REG}
    Let \crefrange{ass:cont}{ass:adj_bounded} hold and $\tau>1$ and $\Lambda\geq \lambda > 0$ satisfy conditions \eqref{choice} as well as \eqref{choice-aux}. Assume further that $\{\delta_k\}_{k \in \N}$ is a positive zero sequence.
    Let the starting point $u_0 \in \overline  B_{U}(u^\dag, \rho)$ and the step sizes $w_n \in [\lambda, \Lambda]$ be arbitrary and let the stopping index $N_k$ be chosen according to the discrepancy principle \eqref{eq:discrepancy}.Then, any subsequence of $\{u_{N_k}^{\delta_k}\}_{k \in \N}$ contains a subsequence that converges strongly to an element of $S(u^\dag ,\rho)$. Furthermore, if $u^\dag$ is the unique solution of \eqref{eq:general-inverse}, then $u_{N_k}^{\delta_k} \to u^\dag$ in $U$ as $k\to\infty$.
\end{theorem}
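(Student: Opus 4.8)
The plan is to combine the asymptotic stability established in \cref{cor:asy-stable} with the monotonicity estimate of \cref{lem:aux} and a standard subsequence--subsequence argument. Fix an arbitrary subsequence of $\{u_{N_k}^{\delta_k}\}_{k\in\N}$ and, following the usual convention, relabel it as $\{u_{N_k}^{\delta_k}\}_{k\in\N}$ itself. Since $\tau>1$ and $\Lambda\geq\lambda>0$ satisfy \eqref{choice} and \eqref{choice-aux} and \crefrange{ass:cont}{ass:adj_bounded} hold, the iteration is asymptotically stable by \cref{cor:asy-stable}. Hence there is a subsequence $\{\delta_{k_i}\}_{i\in\N}$ for which $\overline N := \lim_{i\to\infty} N_{k_i}\in\N\cup\{\infty\}$ exists and conditions \ref{it:AS1} and \ref{it:AS2} of \cref{def:AS} hold, producing limits $\tilde u_n\in\overline B_U(u^\dag,\rho)$ with $u_n^{\delta_{k_i}}\to\tilde u_n$ in $U$ for each admissible $n$. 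I would then distinguish the cases $\overline N<\infty$ and $\overline N=\infty$.

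If $\overline N<\infty$, then, $N_{k_i}$ being integer-valued, $N_{k_i}=\overline N$ for all large $i$, so $u_{N_{k_i}}^{\delta_{k_i}}=u_{\overline N}^{\delta_{k_i}}\to\tilde u_{\overline N}$ by \ref{it:AS1}. To identify the limit as a solution, I pass to the limit in the discrepancy principle \eqref{eq:discrepancy}: complete continuity (\cref{ass:cont}) yields $F(u_{\overline N}^{\delta_{k_i}})\to F(\tilde u_{\overline N})$, while $\|y^{\delta_{k_i}}-y^\dag\|_Y\leq\delta_{k_i}\to 0$ and $\|y^{\delta_{k_i}}-F(u_{N_{k_i}}^{\delta_{k_i}})\|_Y\leq\tau\delta_{k_i}\to 0$ together force $F(\tilde u_{\overline N})=y^\dag$; since $\tilde u_{\overline N}\in\overline B_U(u^\dag,\rho)$, it follows that $\tilde u_{\overline N}\in S(u^\dag,\rho)$.

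The infinite case $\overline N=\infty$ is where the main obstacle lies, since \ref{it:AS1} only provides convergence $u_n^{\delta_{k_i}}\to\tilde u_n$ for each \emph{fixed} $n$, whereas the stopping index $N_{k_i}$ diverges, so a naive diagonal estimate is unavailable. The resolution is to exploit the monotonicity from assertion \ref{it:aux2} of \cref{lem:aux}: let $\tilde u\in S(u^\dag,\rho)$ be the limit furnished by \ref{it:AS2}. For any fixed $m$ and all $i$ large enough that $N_{k_i}\geq m$, the strict decrease of $\|u_n^{\delta_{k_i}}-\tilde u\|_U$ along the iteration gives $\|u_{N_{k_i}}^{\delta_{k_i}}-\tilde u\|_U\leq\|u_m^{\delta_{k_i}}-\tilde u\|_U$. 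Taking $\limsup_{i\to\infty}$ and invoking \ref{it:AS1} yields $\limsup_{i\to\infty}\|u_{N_{k_i}}^{\delta_{k_i}}-\tilde u\|_U\leq\|\tilde u_m-\tilde u\|_U$, and since $m$ is arbitrary, letting $m\to\infty$ with \ref{it:AS2} gives $u_{N_{k_i}}^{\delta_{k_i}}\to\tilde u\in S(u^\dag,\rho)$. In both cases the extracted subsequence converges strongly to an element of $S(u^\dag,\rho)$, which establishes the first assertion.

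Finally, under the uniqueness hypothesis one has $S(u^\dag,\rho)=\{u^\dag\}$, so every subsequence of $\{u_{N_k}^{\delta_k}\}_{k\in\N}$ admits a further subsequence converging strongly to $u^\dag$. A routine contradiction argument then upgrades this to strong convergence of the full sequence, namely $u_{N_k}^{\delta_k}\to u^\dag$ in $U$ as $k\to\infty$.
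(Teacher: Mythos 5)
Your proposal is correct and follows essentially the same route as the paper's proof: invoking \cref{cor:asy-stable} to extract an asymptotically stable subsequence, handling the finite stopping-index case via the discrepancy principle and \cref{ass:cont}, and handling the divergent case via the monotonicity of \cref{lem:aux}\,\ref{it:aux2} with respect to $\tilde u\in S(u^\dag,\rho)$ combined with the limits from \cref{def:AS}. Your $\limsup$-in-$i$-then-$m\to\infty$ packaging of the last step is just a rephrasing of the paper's $\epsilon/2$-argument, and the concluding subsequence--subsequence step for the uniqueness claim matches the paper as well.
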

\begin{proof}
    Let $\{\delta_{k_i}\}_{i \in \N}$ be an arbitrary subsequence of $\{\delta_k\}_{k \in \N}$. By virtue of \cref{cor:asy-stable}, there exist a sequence $\{ \tilde{u}_n \} \subset \overline{B}_U(u^\dag ,\rho)$ and a subsequence of $\{\delta_{k_i}\}_{i \in \N}$, denoted in the same way, satisfying conditions \ref{it:AS1}--\ref{it:AS2} in \cref{def:AS}.

    Assume first that  $\lim_{i \to \infty} N_{k_i} = N$ for some $N \in \N$. From condition \ref{it:AS1} of \cref{def:AS}, we then have 
    \begin{equation}\label{eq:REG-limit1}
        u_N^{\delta_{k_i}} \to \tilde u_N \quad \text{as } i \to \infty.
    \end{equation}
    Furthermore, we see from the discrepancy principle that
    \begin{equation*} 
        %\label{eq:esti-m1}
        \| y^{\delta_{k_i}} - F(u_{N}^{\delta_{k_i}}) \|_{Y} \leq \tau \delta_{k_i} \quad \text {for all $i$  large enough}.
    \end{equation*}
    Letting $i \to \infty$ in the above estimate and using \eqref{eq:REG-limit1} together with the continuity of $F$ yields that $y^\dag = F(\tilde u_N)$ and hence $\tilde u_N \in S(u^\dag ,\rho)$. 

    It remains to consider the case where $N_{k_i} \to \infty$ as $i \to \infty$. Since $\{N_{k}\}_{k\in\N}\subset\N$, we can assume without loss of generality that $\{N_{k_i} \}_{i \in \N}$ is monotonically increasing. 
    Condition \ref{it:AS2} of \cref{def:AS} then provides some $\tilde{u} \in S(u^\dag ,\rho)$ that together with $\{\delta_{k_i}\}_{i \in \N}$ and $ \{ \tilde{u}_n \}_{n \in \N}$ satisfies
    \begin{align}
        & u_n^{\delta_{k_i}} \to \tilde u_n \quad \text {as } i \to \infty, \quad \text {for $0 \leq n \leq N_{k_i}$ with all $i$ large enough} \label{eq:conver1} \\
        & \tilde{u}_n \to \tilde{u} \quad \text {as } n \to \infty.  \label{eq:conver2}
    \end{align}
    From \eqref{eq:conver2}, for each $\epsilon >0$, there exists an integer $n^*$ such that 
    \begin{equation*}
        \|\tilde u_{n^*} - \tilde{u} \|_{U} < \frac{\epsilon}{2}.
    \end{equation*}
    It also follows from \eqref{eq:conver1} and the fact $N_{k_i}$ tends increasingly to infinity as $i \to \infty$ that an $\bar i \in \N$ exists such that
    \begin{equation*} 
        %\label{eq:lim-n}
        n^* \leq N_{k_i} \quad \text {and} \quad \|  u_{n^*}^{\delta_{k_i}} - \tilde u_{n^*} \|_{U} < \frac{\epsilon}{2}\quad \text{for all } i \geq \bar i.
    \end{equation*}     
    \cref{lem:aux} thus implies that
    \begin{equation*}
        \| u_{N_{k_i}}^{\delta_{k_i}} - \tilde{u} \|_{U}  \leq \| u_{n^*}^{{\delta_{k_i}}} - \tilde{u} \|_{U} \leq \| u_{n^*}^{{\delta_{k_i}}} -\tilde u_{n^*} \|_{U} + \| \tilde u_{n^*} - \tilde{u} \|_{U} < \epsilon \quad \text{for all }  i \geq \bar i.
    \end{equation*} 
    We thus obtain that $\lim_{i \to \infty}\| u_{N_{k_i}}^{\delta_{k_i} } - \tilde{u} \|_{U} =0$ as claimed.
\end{proof}

\section{Iterative regularization for a non-smooth forward operator} \label{sec:aux}

In this section, we study the solution operator for our model problem. In particular, we show that a Bouligand subderivative of the solution operator satisfies the assumptions -- in particular, the generalized tangential cone condition \eqref{eq:GTCC} -- for our convergence analysis of the modified Landweber method in \cref{sec:Alg}, thus justifying our \emph{Bouligand--Landweber method}.

\subsection{Well-posedness and directional differentiability}

Let $\Omega\subset\R^d$, $2 \leq d \leq 3$, be a bounded domain with Lipschitz boundary $\partial\Omega$. For $u\in L^2(\Omega)$, we consider the equation
\begin{equation}\label{eq:maxpde}
    \left\{
        \begin{aligned}
            -\Delta y + y^+ &= u \quad\text{in }\Omega,\\
            y  &=0 \quad\text{on }\partial\Omega,
        \end{aligned}
    \right.
\end{equation}
which, as all partial differential equations from here on, is to be understood in the weak sense.
From \cite[Thm.~4.7]{Troltzsch}, equation \eqref{eq:maxpde} admits, for each $u \in L^2(\Omega)$, a unique solution $y_u$ belonging to $H^1_0(\Omega) \cap C(\overline{\Omega})$ and satisfying the a priori estimate
\begin{equation*}
    %\label{eq:apriori_esti}
    \norm{y_u}_{H^1_0(\Omega)} + \norm{y_u}_{C(\overline{\Omega})} \leq c_\infty \norm{u}_{L^2(\Omega)}
\end{equation*} 
for some constant $c_\infty>0$ independent of $u$.

Let us denote by $F: L^2(\Omega) \to H^1_0(\Omega) \cap C(\overline{\Omega})$ the solution operator of \eqref{eq:maxpde}. 
The global Lipschitz continuity of $F$ is established by the following proposition.
\begin{proposition}[{\cite[Prop.~2.1]{Christof2017}}] \label{prop:well-posed}
    $F$ is globally Lipschitz continuous as a function from $L^2(\Omega)$ to $ H^1_0(\Omega) \cap C(\overline{\Omega})$, i.e., 
    there is a constant $C_F>0$ satisfying 
    \begin{align}
        \| F(u)- F(v)\|_{H^1_0(\Omega)} + \| F(u)- F(v)\|_{C(\overline\Omega)} &\leq C_F \| u -v \|_{L^2(\Omega)} \label{eq:F-Lip}
        %\| F(u)\|_{H^1_0(\Omega)} +\| F(u)\|_{C(\overline\Omega)}  &\leq C_F\| u \|_{L^2(\Omega)}, \label{eq:F-conti}
    \end{align} 
    for all $u,v \in L^2(\Omega)$. 
\end{proposition}
\begin{proof}
    Let us set $y_u := F(u)$ and $y_v := F(v)$. By subtracting equation \eqref{eq:maxpde} corresponding to $y_u$ and $y_v$, we have 
    \begin{equation}
        \label{eq:subtract_pde}
        \left\{
            \begin{aligned}
                -\Delta (y_u - y_v) + (y_u)^+ - (y_v)^+ &= u -v \quad && \text{in }\Omega,\\
                y_u - y_v  &=0 \quad && \text{on }\partial\Omega.
            \end{aligned}
        \right.
    \end{equation}
    Testing the above equation with $y_u -y_v$ and exploiting the monotonicity of the max-operator, we arrive at
    \begin{equation*}
        \norm{\nabla y_u - \nabla y_v}_{L^2(\Omega)}^2 \leq \norm{u -v}_{L^2(\Omega)}\norm{y_u -y_v}_{L^2(\Omega)},
    \end{equation*}
    which together with the Poincar\'{e} inequality yields that
    \begin{equation}
        \label{eq:esti}
        \norm{y_u - y_v}_{L^2(\Omega)} \leq C_1 \norm{u -v}_{L^2(\Omega)}
    \end{equation}
    for some constant $C_1>0$. We now apply \cite[Thm.~4.7]{Troltzsch} to equation \eqref{eq:subtract_pde} to obtain that
    \begin{equation*}
        \begin{aligned}
            \norm{y_u - y_v}_{H^1_0(\Omega)} + \norm{y_u - y_v}_{C(\overline{\Omega})} & \leq C_2 \norm{u - v - \left((y_u)^+ - (y_v)^+ \right)}_{L^2(\Omega)} \\
                                                                                       & \leq C_2 \left[ \norm{u - v }_{L^2(\Omega)} + \norm{(y_u)^+ - (y_v)^+}_{L^2(\Omega)} \right]  \\
                                                                                       & \leq C_2 \left[\norm{u - v }_{L^2(\Omega)} + \norm{y_u - y_v }_{L^2(\Omega)} \right].
        \end{aligned}
    \end{equation*}
    Here we have used the global Lipschitz continuity of the max-operator to derive the last inequality. From this and the estimate \eqref{eq:esti}, we deduce \eqref{eq:F-Lip}.
\end{proof}

This implies \emph{a fortiori} that $F$ is continuous from $L^2(\Omega)$ to $L^2(\Omega)$. In our analysis, we will also need the complete continuity of $F$ between these spaces.
\begin{lemma} \label{lem:weak-con} 
    The mapping $F: L^2(\Omega) \to L^2(\Omega)$ is completely continuous, i.e., $u_n\wkto u$ implies $F(u_n)\to F(u)$.
\end{lemma}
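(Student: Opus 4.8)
The plan is to combine the a priori estimate from \cref{prop:well-posed} with the compactness of the embedding $H^1_0(\Omega)\hookrightarrow L^2(\Omega)$, the Lipschitz continuity of the max-function, and a subsequence--subsequence argument. The point of complete continuity is precisely that the smoothing effect of the elliptic solution operator upgrades weak convergence of the data to strong convergence of the states.

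Suppose $u_n\wkto u$ in $L^2(\Omega)$. Since weakly convergent sequences are bounded, $\{u_n\}_{n\in\N}$ is bounded in $L^2(\Omega)$, and hence by the a priori estimate preceding \cref{prop:well-posed} the sequence $\{F(u_n)\}_{n\in\N}$ is bounded in $H^1_0(\Omega)$. As $\Omega$ is bounded with Lipschitz boundary, the embedding $H^1_0(\Omega)\hookrightarrow L^2(\Omega)$ is compact; therefore any subsequence of $\{F(u_n)\}$ admits a further subsequence, denoted in the same way, with $F(u_n)\wkto y^*$ in $H^1_0(\Omega)$ and $F(u_n)\to y^*$ strongly in $L^2(\Omega)$ for some $y^*\in H^1_0(\Omega)$.

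The crucial step is to pass to the limit in the weak formulation of \eqref{eq:maxpde}, namely in
\[
    \int_\Omega \nabla F(u_n)\cdot\nabla\varphi\,dx + \int_\Omega F(u_n)^+\varphi\,dx = \int_\Omega u_n\varphi\,dx \qquad \text{for all } \varphi\in H^1_0(\Omega).
\]
The weak convergence in $H^1_0(\Omega)$ handles the first (linear) term, while $u_n\wkto u$ in $L^2(\Omega)$ handles the right-hand side. For the nonlinear term we use that $t\mapsto t^+$ is globally Lipschitz, so the strong convergence $F(u_n)\to y^*$ in $L^2(\Omega)$ yields $F(u_n)^+\to (y^*)^+$ in $L^2(\Omega)$. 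Passing to the limit thus shows that $y^*$ is the weak solution of \eqref{eq:maxpde} for the datum $u$, whence $y^*=F(u)$ by uniqueness (\cref{prop:well-posed}). Since this limit is independent of the chosen subsequence, the subsequence--subsequence principle gives $F(u_n)\to F(u)$ strongly in $L^2(\Omega)$ for the entire sequence.

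The only genuinely delicate point is the nonlinear term $F(u_n)^+$, which cannot be controlled by weak convergence alone; it is precisely the compact embedding $H^1_0(\Omega)\hookrightarrow L^2(\Omega)$ that promotes the $H^1_0$-bound to strong $L^2$-convergence and, via the Lipschitz property of the max-function, lets us identify the limit of the nonlinearity. Everything else reduces to standard weak lower-semicontinuity and linearity arguments.
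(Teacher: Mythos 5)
Your proof is correct, but it takes a more self-contained route than the paper. The paper's own proof is two lines: it cites \cite[Cor.~3.8]{Christof2017} for the weak-to-weak continuity of $F:L^2(\Omega)\to H^1_0(\Omega)$ and then invokes the compact embedding $H^1_0(\Omega)\hookrightarrow L^2(\Omega)$ to upgrade weak $H^1_0$-convergence of $F(u_n)$ to strong $L^2$-convergence. You instead reprove the cited weak-continuity result from scratch: a priori $H^1_0$-boundedness of the states, extraction of a subsequence converging weakly in $H^1_0$ and strongly in $L^2$, passage to the limit in the weak formulation (where the global Lipschitz property of $t\mapsto t^+$ together with the strong $L^2$-convergence identifies the limit of the nonlinear term), identification of the limit by uniqueness of solutions, and a subsequence--subsequence argument to recover convergence of the full sequence. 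Both arguments hinge on the same compactness mechanism; what yours buys is independence from the external reference (essentially reconstructing the content of \cite[Cor.~3.8]{Christof2017}), at the cost of length, while the paper's buys brevity by delegation. One cosmetic remark: your closing mention of ``weak lower-semicontinuity'' is a misnomer --- nothing in the argument requires lower semicontinuity, only the pairing of weak convergence with fixed test functions in the linear terms.
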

\begin{proof}
    From {\cite[Cor. 3.8]{Christof2017}}, we obtain that $F$ is weakly continuous from $L^2(\Omega)$ to $ H^1_0(\Omega)$. The compact embedding $H^1_0(\Omega) \hookrightarrow L^2(\Omega)$ then yields that $F: L^2(\Omega) \to L^2(\Omega)$ is completely continuous.
\end{proof}

We now turn to the differentiability of the solution mapping. We first recall that $F$ is directionally differentiable.
\begin{proposition}[{\cite[Thm.~2.2]{Christof2017}}]
    For any $u\in L^2(\Omega)$ and $h\in L^2(\Omega)$, the mapping $F:L^2(\Omega)\to H^1_0(\Omega)$ is directionally differentiable, with the directional derivative $F'(u; h)$ in direction $h\in L^2(\Omega)$ given by the solution $\eta\in H^1_0(\Omega)$ to
    \begin{equation} \label{eq:deri}
        \left\{
            \begin{aligned}
                - \Delta \eta + \1_{\{y_u = 0\}}\eta^+ + \1_{\{y_u >0\}}\eta &=h \quad \text{in }\Omega,\\
                \eta &=0\quad\text{on }\partial\Omega,
            \end{aligned}
        \right.
    \end{equation}
    where $y_u = F(u)$.
\end{proposition}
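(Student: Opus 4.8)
The plan is to establish directional differentiability of $F$ by directly analyzing the difference quotient. First I would fix $u, h \in L^2(\Omega)$ and, for $t > 0$, set $y_t := F(u+th)$ and $\eta_t := (y_t - y_u)/t$. Subtracting the weak form of \eqref{eq:maxpde} for $u+th$ and for $u$ shows that $\eta_t$ solves
\begin{equation*}
    -\Delta \eta_t + \frac{(y_t)^+ - (y_u)^+}{t} = h \quad\text{in }\Omega, \qquad \eta_t = 0 \quad\text{on }\partial\Omega.
\end{equation*}
The global Lipschitz continuity from \cref{prop:well-posed} gives $\norm{\eta_t}_{H^1_0(\Omega)} \leq C_F \norm{h}_{L^2(\Omega)}$ uniformly in $t$, so along a subsequence $\eta_t \wkto \eta$ in $H^1_0(\Omega)$ for some limit $\eta$; by the compact embedding $H^1_0 \hookrightarrow L^2$ this convergence is strong in $L^2(\Omega)$ and pointwise a.e.\ along a further subsequence.

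Next I would identify the limit of the nonlinear quotient. Writing $q_t := \bigl((y_t)^+ - (y_u)^+\bigr)/t$, the key is to pass to the limit in $q_t$. Because $t \mapsto s^+$ is the convex function $\max(s,0)$, one has the pointwise identity $(y_t)^+ - (y_u)^+ = \int_0^1 \1_{\{y_u + \theta(y_t-y_u) > 0\}}\,(y_t - y_u)\,d\theta$ away from the kink, and since $y_t \to y_u$ uniformly (again by \cref{prop:well-posed}) while $\eta_t \to \eta$ in $L^2$, I would show $q_t \to \1_{\{y_u>0\}}\eta + \1_{\{y_u=0\}}\eta^+$ in an appropriate sense. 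The cleanest route is to split $\Omega$ into $\{y_u > 0\}$, $\{y_u < 0\}$, and $\{y_u = 0\}$: on $\{y_u > 0\}$ one has $y_t > 0$ for small $t$ so the quotient is just $\eta_t \to \eta$; on $\{y_u < 0\}$ it is eventually $0$; and on the critical set $\{y_u = 0\}$, where $(y_t)^+ / t = (\eta_t)^+$, the limit is $\eta^+$ using continuity of $s \mapsto s^+$ and the a.e.\ convergence $\eta_t \to \eta$. This yields that the limit $\eta$ satisfies the linear variational inequality \eqref{eq:deri} in weak form.

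Finally I would argue uniqueness and upgrade convergence. Uniqueness of the solution $\eta$ to \eqref{eq:deri} follows by testing the difference of two solutions against itself and using monotonicity of both $s\mapsto s^+$ and $s \mapsto \1_{\{y_u=0\}}s^+ + \1_{\{y_u>0\}}s$, which forces the gradient difference to vanish; this is standard and shows the PDE \eqref{eq:deri} is uniquely solvable. Since every subsequence of $\{\eta_t\}$ has a further subsequence converging to this unique $\eta$, the full family converges as $t \downarrow 0$, giving $F'(u;h) = \eta$. To conclude genuine directional differentiability in $H^1_0(\Omega)$ (not merely weak-$L^2$), I would test the equation for $\eta_t - \eta$ against itself and exploit the monotonicity of the nonlinear term to bound $\norm{\nabla(\eta_t - \eta)}_{L^2}^2$ by terms that vanish, upgrading weak to strong convergence.

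The main obstacle is the passage to the limit in $q_t$ on the critical set $\{y_u = 0\}$, where the nonlinearity is non-differentiable and the naive chain rule fails. The essential technical point is that on $\{y_u = 0\}$ the difference quotient collapses to $(\eta_t)^+$ rather than $\eta_t$, and justifying the convergence $(\eta_t)^+ \to \eta^+$ requires the strong $L^2$ (or a.e.) convergence of $\eta_t$ secured via compactness rather than any linearization. One must be careful that the measure of the boundary layer where $\sign(y_t)$ differs from $\sign(y_u)$ shrinks to zero, which is where the uniform convergence $y_t \to y_u$ in $C(\overline\Omega)$ from \cref{prop:well-posed} is used decisively.
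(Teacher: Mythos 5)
The paper itself offers no proof of this proposition: it is imported verbatim from \cite[Thm.~2.2]{Christof2017}, so there is no internal argument to compare against. Your difference-quotient proof is essentially correct and is the natural self-contained route: the uniform bound $\norm{\eta_t}_{H^1_0(\Omega)}\leq C_F\norm{h}_{L^2(\Omega)}$ from \cref{prop:well-posed}, extraction of a subsequence converging weakly in $H^1_0(\Omega)$, strongly in $L^2(\Omega)$ and a.e., the pointwise identification of the quotient $q_t := ((y_t)^+-(y_u)^+)/t$ separately on $\{y_u>0\}$, $\{y_u<0\}$, $\{y_u=0\}$ (where indeed $q_t=(\eta_t)^+$), uniqueness for \eqref{eq:deri} by monotone testing, the Urysohn subsequence argument, and the final energy estimate all go through. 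Two steps should be made precise. First, to pass from a.e.\ convergence of $q_t$ to convergence against test functions you need the domination $|q_t|\leq|\eta_t|$ (from the $1$-Lipschitz continuity of $s\mapsto s^+$), which together with the $L^2$-bound gives weak $L^2$ convergence of $q_t$ to its a.e.\ limit; your ``in an appropriate sense'' is exactly this. Second, in the strong-convergence upgrade, ``monotonicity of the nonlinear term'' does not apply directly to $q_t-q$, since $q_t=\phi_t(\eta_t)$ and $q=g(\eta)$ involve \emph{different} monotone functions, namely $\phi_t(s)=((y_u+ts)^+-(y_u)^+)/t$ and $g(s)=\1_{\{y_u=0\}}s^++\1_{\{y_u>0\}}s$; either insert the intermediate term $\phi_t(\eta)$, so that $-\int(\phi_t(\eta_t)-\phi_t(\eta))(\eta_t-\eta)\,dx\leq 0$ by monotonicity of $\phi_t$ while $\phi_t(\eta)\to g(\eta)$ in $L^2(\Omega)$ by dominated convergence, or dispense with monotonicity altogether and estimate $\norm{\nabla(\eta_t-\eta)}_{L^2(\Omega)}^2\leq\norm{q_t-q}_{L^2(\Omega)}\norm{\eta_t-\eta}_{L^2(\Omega)}$, where the first factor is bounded and the second vanishes. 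Finally, a terminological slip: \eqref{eq:deri} is a non-smooth semilinear elliptic \emph{equation}, not a ``linear variational inequality.'' With these small repairs your proof is complete and, as far as one can tell, follows the same spirit as the argument in the cited reference.
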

However, $F$ is in general not Gâteaux differentiable.
\begin{proposition}\label{prop:gateaux}
    Let $u\in L^2(\Omega)$. Then $F: L^2(\Omega) \to L^2(\Omega)$ is Gâteaux differentiable in $u$ if and only if $|\{y_u=0\}|=0$.
\end{proposition}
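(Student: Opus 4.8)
The plan is to exploit that Gâteaux differentiability at $u$ is equivalent to the directional derivative map $h\mapsto F'(u;h)$ being a bounded \emph{linear} operator. Since the preceding proposition guarantees that this map exists and is given by the solution $\eta$ of \eqref{eq:deri}, and since directional derivatives are always positively homogeneous, the only property that can fail is oddness (equivalently, additivity). The crucial observation is that the sole nonlinearity in \eqref{eq:deri} is the term $\1_{\{y_u=0\}}\eta^+$, so everything hinges on whether its coefficient $\1_{\{y_u=0\}}$ is active on a set of positive measure.

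For the forward implication I would argue that if $|\{y_u=0\}|=0$, then $\1_{\{y_u=0\}}\eta^+=0$ as an element of $L^2(\Omega)$, so that \eqref{eq:deri} reduces to the linear elliptic equation $-\Delta\eta+\1_{\{y_u>0\}}\eta=h$. Its solution operator $A\colon h\mapsto\eta$ is linear and bounded from $L^2(\Omega)$ into $H^1_0(\Omega)\hookrightarrow L^2(\Omega)$, and the preceding proposition identifies $F'(u;h)=Ah$ as well as $F'(u;-h)=A(-h)=-Ah$. Hence both one-sided difference quotients $t^{-1}(F(u+th)-F(u))$ converge to $Ah$ as $t\to0^\pm$, so $F$ is Gâteaux differentiable at $u$ with derivative $A$.

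For the converse I would show that if $|\{y_u=0\}|>0$, then $h\mapsto F'(u;h)$ fails to be odd and hence cannot be linear. Fix $h\equiv1$ (any nonnegative $h\not\equiv0$ works) and set $\eta:=F'(u;h)$, $\xi:=F'(u;-h)$. Testing \eqref{eq:deri} with $\eta^-$ and using the monotonicity of the zeroth-order terms shows $\eta\ge0$; the strong maximum principle for $-\Delta+\1_{\{y_u\ge0\}}$, applied to the nonnegative, nontrivial solution $\eta$, then yields $\eta>0$ a.e.\ in $\Omega$, in particular on the positive-measure set $\{y_u=0\}$. Adding the two instances of \eqref{eq:deri} for $\eta$ and $\xi$ shows that $w:=\eta+\xi$ solves the linear equation
\begin{equation*}
    -\Delta w+\1_{\{y_u>0\}}w=-\1_{\{y_u=0\}}\bigl(\eta^++\xi^+\bigr),
\end{equation*}
whose right-hand side is $\le0$ and, since $\eta^+=\eta>0$ on $\{y_u=0\}$, strictly negative on a set of positive measure, hence nonzero in $L^2(\Omega)$. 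As $-\Delta+\1_{\{y_u>0\}}$ is coercive and therefore injective, $w\ne0$; that is, $F'(u;-h)\ne-F'(u;h)$, so the directional derivative is not linear and $F$ is not Gâteaux differentiable at $u$.

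I expect the main obstacle to be the strict-positivity step $\eta>0$ on $\{y_u=0\}$: mere nonnegativity of $\eta$ does not suffice, since I need the right-hand side $\1_{\{y_u=0\}}(\eta^++\xi^+)$ to be genuinely nonzero. This requires the strong maximum principle (equivalently, a weak Harnack inequality) for the operator $-\Delta+c$ with $0\le c\in L^\infty(\Omega)$ applied to a nonnegative $H^1_0$-solution, and thus invokes connectedness of $\Omega$. The remaining steps—the sign of $\eta$ via testing with $\eta^-$ and the unique solvability of the linear equation for $w$—are routine.
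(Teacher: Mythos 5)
Your proof is correct, but the converse direction takes a genuinely different route from the paper's. The paper assumes Gâteaux differentiability, deduces oddness $F'(u;-h)=-F'(u;h)$, and then adds the two instances of \eqref{eq:deri} for $\eta$ and $-\eta$: because of oddness the Laplacian and the $\1_{\{y_u>0\}}$ terms cancel exactly, leaving the pointwise identity $\1_{\{y_u=0\}}|F'(u;h)|=0$ for \emph{every} $h$. It then reverse-engineers a direction, $h:=-\Delta\psi+\1_{\{y_u\geq0\}}\psi$ with $\psi\in C^\infty(\R^d)$ strictly positive in $\Omega$ and vanishing outside (from \cite[Lem.~A.1]{Christof2017}), so that $F'(u;h)=\psi$ is an explicitly known positive function; the conclusion $|\{y_u=0\}|=0$ is then immediate, with no maximum principle whatsoever. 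You instead argue contrapositively with the generic direction $h\equiv1$, obtain $\eta\geq0$ by testing with $\eta^-$ (correct), and must then upgrade nonnegativity to strict positivity on $\{y_u=0\}$ via the weak Harnack/strong maximum principle, which also forces you to invoke connectedness of $\Omega$; since the equation for $w=\eta+\xi$ is derived \emph{without} assuming oddness, the Laplacians do not cancel and you need the right-hand side $-\1_{\{y_u=0\}}(\eta^++\xi^+)$ to be nonzero, which is exactly where the positivity is consumed. Both arguments are valid ($\Omega$ is a bounded \emph{domain} in this section, so connectedness is available), and your forward direction—reducing \eqref{eq:deri} to a linear equation when $|\{y_u=0\}|=0$ and checking both one-sided quotients—is a correct direct substitute for the paper's citation of \cite[Cor.~2.3]{Christof2017}. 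What the paper's trick buys is precisely the avoidance of your "main obstacle": choosing $h$ so that the derivative is a prescribed positive function makes the maximum principle (and connectedness) unnecessary, at the modest price of the auxiliary construction of $\psi$. One cosmetic point: in your last step, injectivity or coercivity of $-\Delta+\1_{\{y_u>0\}}$ is not what is used; you only need that $w=0$ would force the (nonzero) right-hand side to vanish.
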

\begin{proof}
    Assume that $|\{y_u=0\}|=0$. Then by virtue of {\cite[Cor.~2.3]{Christof2017}}, $F: L^2(\Omega) \to  H^1_0(\Omega)$ is Gâteaux differentiable in $u$. Since $H^1_0(\Omega) \hookrightarrow L^2(\Omega)$ continuously, $F$ is Gâteaux differentiable in $u$ as a function from $L^2(\Omega)$ to $L^2(\Omega)$. It remains to prove that Gâteaux differentiability of $F: L^2(\Omega) \to L^2(\Omega)$ in  $u$ implies that $|\{y_u=0\}|=0$. First, there exists a bounded operator $S: L^2(\Omega) \to L^2(\Omega)$ such that
    \begin{equation} \label{eq:deri1}
        \frac{F(u +th) - F(u)}{t}\to Sh \quad \text{in } L^2(\Omega)\quad \text{as }t\to 0^+
    \end{equation}
    for any $h \in L^2(\Omega)$. Moreover, the right hand side of \eqref{eq:deri1} tends to $F'(u;h)$ in $ H^1_0(\Omega)$ and so in $L^2(\Omega)$ whenever $t \to 0^+$. It must hold that $S = F'(u; \cdot)$ and thus $F'(u;h) = -F'(u;-h)$ for any $h \in L^2(\Omega)$. 
    Fixing $h \in L^2(\Omega)$ and setting $\eta := F'(u;h)$, we see from \eqref{eq:deri} that
    \begin{equation} \label{eq:deri2}
        \left\{
            \begin{aligned}
                - \Delta \eta + \1_{\{y_u = 0\}}\eta^+ + \1_{\{y_u >0\}}\eta &=h \quad \text{in }\Omega,\\
                \eta &=0\quad\text{on }\partial\Omega.
            \end{aligned}
        \right.
    \end{equation}
    Since $-\eta = F'(u;-h)$, it also holds that
    \begin{equation} \label{eq:deri3}
        \left\{
            \begin{aligned}
                - \Delta (-\eta) + \1_{\{y_u = 0\}}(-\eta)^+ + \1_{\{y_u >0\}}(-\eta)  &= -h \quad \text{in }\Omega,\\
                -\eta &=0\quad\text{on }\partial\Omega.
            \end{aligned}
        \right.
    \end{equation}    
    Adding \eqref{eq:deri2} and \eqref{eq:deri3}, we obtain that $\1_{\{y_u = 0\}}\eta^+ + \1_{\{y_u = 0\}}(-\eta)^+ = 0$ a.e. in $\Omega$, which is equivalent to
    \begin{equation} \label{eq:test}
        \1_{\{y_u = 0\}}|F'(u;h)| = \1_{\{y_u = 0\}}|\eta| = 0.
    \end{equation} 
    Now by \cite[Lem.~A.1]{Christof2017}, there exists a function $\psi \in C^\infty(\R^d)$ satisfying $\psi >0$ in $\Omega$ and $\psi =0$ in $\R^d \backslash \Omega$. Setting
    \begin{equation*}
        h : = - \Delta \psi + \1_{\{y_u \geq 0\}} \psi \in L^2(\Omega),
    \end{equation*}
    we then have $F'(u;h) = \psi$. Plugging this into \eqref{eq:test} yields $\1_{\{y_u = 0\}}\psi = 0$. Consequently, we have $|\{y_u=0\}|=0$ as claimed.
\end{proof}

The directional derivative is difficult to exploit algorithmically. A more convenient object can be constructed using the Bouligand subdifferential, which also arises in the definition of the Clarke subdifferential \cite{Clarke:1990} (as the convex hull of the Bouligand subdifferential) and is used in the construction of semi-smooth Newton methods \cite{Chen:2000a,Ulbrich2011} (as a set of candidates for slant or Newton derivatives). We first define the set of \emph{Gâteaux points} of $F$ as
\begin{equation*}
    D := \{ v\in L^2(\Omega) : \text{$F: L^2(\Omega) \to  H^1_0(\Omega)$ is G\^ateaux differentiable in $v$}\}.
\end{equation*} 
The (strong-strong) \emph{Bouligand subdifferential} at $u\in L^2(\Omega)$ is then defined as
\begin{align*} 
    \partial_{B} F(u) 
    := \{ & G_u \in \Linop(L^2(\Omega), H^1_0(\Omega)) : 
        \text{there exists }  \{u_n\}_{n\in\N} \subset D \text{ such that}\\
        & u_n \to u \text{ in } L^2(\Omega)
    \text{ and }  F'(u_n;h) \to G_u\,h \text{ in }  H^1_0(\Omega)  \text{ for all } h \in L^2(\Omega)\}.
\end{align*}
From the definition and the Lipschitz continuity of $F$, it follows that any $G_u\in \partial_BF(u)$ is uniformly bounded for all $u\in L^2(\Omega)$ and that if $F$ is Gâteaux differentiable in $u$, then $F'(u)\in \partial_B F(u)$; cf.~\cite[Lem.~3.3]{Christof2017}. In particular, we deduce that there exist constants $L$ and $\hat L$ satisfying
\begin{equation}\label{eq:Gu-norm}
    \| G_u \|_{\Linop(L^2(\Omega), L^2(\Omega))} \leq L, \quad \| G_u\|_{\Linop(L^2(\Omega), H^1_0(\Omega))} \leq \hat L. 
\end{equation}

We can give a convenient characterization of a specific Bouligand subderivative of $F$.
\begin{proposition}[{\cite[Prop.~3.16]{Christof2017}}] \label{prop:Gu}
    Given $u \in L^2(\Omega)$, let $G_u: L^2(\Omega) \to H^1_0(\Omega) \hookrightarrow L^2(\Omega)$ be the solution operator mapping $h\in L^2(\Omega)$ to the unique solution $\eta\in  H^1_0(\Omega)$ to
    \begin{equation} \label{eq:bouligand_pde}
        \left\{
            \begin{aligned}
                - \Delta \eta + \1_{\{y_u>0\}}\eta &= h \quad \text {in } \Omega,\\
                \eta &= 0 \quad \text {on } \partial\Omega,
            \end{aligned}
        \right.
    \end{equation}
    where $y_u:=F(u)$. Then $G_u \in \partial_{B} F(u)$.
\end{proposition}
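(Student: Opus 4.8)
The plan is to show that the operator $G_u$ defined via \eqref{eq:bouligand_pde} arises as a limit of Fréchet derivatives $F'(u_n)$ at Gâteaux points $u_n$ approaching $u$, thereby placing it in $\partial_B F(u)$ by definition. The natural strategy is to construct an explicit approximating sequence $\{u_n\}_{n\in\N}\subset D$ with $u_n\to u$ in $L^2(\Omega)$ and to verify that $F'(u_n;h)\to G_u h$ in $H^1_0(\Omega)$ for every $h\in L^2(\Omega)$. Comparing the PDE \eqref{eq:deri} characterizing the directional derivative at a Gâteaux point (where $|\{y_{u_n}=0\}|=0$, so the term $\1_{\{y_{u_n}=0\}}\eta^+$ drops out by \cref{prop:gateaux}) with the target PDE \eqref{eq:bouligand_pde}, I see that the derivative operator at a Gâteaux point $u_n$ solves
\begin{equation*}
    -\Delta \eta_n + \1_{\{y_{u_n}>0\}}\eta_n = h, \quad \eta_n=0 \text{ on }\partial\Omega,
\end{equation*}
whereas $G_u$ solves the same equation with the coefficient $\1_{\{y_u>0\}}$. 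Thus the entire problem reduces to choosing a perturbation of $u$ that (i) moves into the Gâteaux set $D$, and (ii) makes the zeroth-order coefficients $\1_{\{y_{u_n}>0\}}$ converge to $\1_{\{y_u>0\}}$ in a sense strong enough to pass to the limit in the weak formulation.

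First I would construct the sequence. A clean choice is to shift the source so that the zero level set of the state collapses: set $u_n := u + c_n$ for a suitable sequence of constants (or more carefully, perturb $u$ so that the new state $y_{u_n}$ satisfies $|\{y_{u_n}=0\}|=0$, putting $u_n\in D$ by \cref{prop:gateaux}) while ensuring $y_{u_n}\to y_u$ pointwise a.e.\ on the critical set. By the Lipschitz continuity of $F$ from \cref{prop:well-posed}, $y_{u_n}=F(u_n)\to F(u)=y_u$ in $C(\overline\Omega)$, which gives uniform convergence of the states. The key analytic point is then the convergence of the indicator coefficients: on the set $\{y_u>0\}$, uniform convergence of $y_{u_n}$ forces $\1_{\{y_{u_n}>0\}}\to 1$, and on $\{y_u<0\}$ it forces $\1_{\{y_{u_n}>0\}}\to 0$, a.e.; the only delicate region is $\{y_u=0\}$, but since $G_u$ uses $\1_{\{y_u>0\}}$ this set contributes zero to the target coefficient, and one must arrange the perturbation so that the mass of $\{y_{u_n}>0\}\cap\{y_u=0\}$ vanishes in the limit.

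Next I would pass to the limit in the weak formulations. Testing the equation for $\eta_n:=F'(u_n;h)$ against $\eta_n$ itself and using the nonnegativity of the zeroth-order term yields a uniform $H^1_0(\Omega)$ bound, so along a subsequence $\eta_n\wkto\eta$ in $H^1_0(\Omega)$ and $\eta_n\to\eta$ in $L^2(\Omega)$ by compact embedding. The convergence $\1_{\{y_{u_n}>0\}}\to\1_{\{y_u>0\}}$ a.e.\ combined with dominated convergence lets me pass to the limit in the term $\int_\Omega \1_{\{y_{u_n}>0\}}\eta_n\,\varphi$ for any test function $\varphi$, identifying $\eta$ as the unique solution of \eqref{eq:bouligand_pde}, i.e.\ $\eta=G_u h$. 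Upgrading weak to strong $H^1_0$ convergence follows from testing with $\eta_n-\eta$ and exploiting the coercivity of $-\Delta$, giving $\|\nabla(\eta_n-\eta)\|_{L^2}\to 0$; uniqueness of the limit then removes the subsequence.

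The main obstacle I anticipate is controlling the behavior of the indicator coefficient on and near the zero level set $\{y_u=0\}$. Uniform convergence of states only yields a.e.\ convergence of the indicators off a set of measure zero, but the delicate interplay is ensuring that the perturbation $u_n$ both lands in $D$ \emph{and} does not create persistent overlap $\{y_{u_n}>0\}\cap\{y_u=0\}$ of nonvanishing measure. This is essentially the content already exploited in \cite{Christof2017}, and I would lean on the structural results there (in particular the tools used to prove \cref{prop:gateaux}) to justify the existence of such a sequence; indeed, the cleanest route may be to invoke the approximation construction in the proof of \cite[Prop.~3.16]{Christof2017} directly rather than rebuild it from scratch.
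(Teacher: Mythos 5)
The first thing to note is that the paper contains no proof of this proposition at all: it is imported verbatim from \cite[Prop.~3.16]{Christof2017}, so your closing fallback of ``invoking the approximation construction in that proof directly'' is precisely what the paper does. The question is therefore whether your standalone sketch could replace the citation, and it cannot as written: there is a genuine gap exactly at the point you yourself flag as the ``main obstacle.'' Your skeleton (approximate $u$ by Gâteaux points $u_n$, identify $F'(u_n)h$ via \eqref{eq:deri} and \cref{prop:gateaux}, pass to the limit in the indicator coefficient) is indeed the only possible route, and your limiting argument (uniform $H^1_0(\Omega)$ bound, a.e.\ convergence of indicators plus dominated convergence, energy estimate for the difference to upgrade to strong convergence) is sound. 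What is missing is the construction of $u_n$, and your one concrete proposal, $u_n := u + c_n$ with constants $c_n$, fails for the natural choice $c_n > 0$: subtracting the two state equations and writing $(y_{u_n})^+ - (y_u)^+ = a\,(y_{u_n}-y_u)$ with $0 \leq a \leq 1$, the difference $w := y_{u_n} - y_u$ satisfies $-\Delta w + a w = c_n > 0$, so by the strong maximum principle $w > 0$ throughout $\Omega$. Hence $y_{u_n} > 0$ \emph{everywhere} on the critical set $\{y_u = 0\}$, so $\1_{\{y_{u_n}>0\}} \to \1_{\{y_u \geq 0\}}$ a.e., and whenever $|\{y_u=0\}| > 0$ your limit operator is a \emph{different} element of $\partial_B F(u)$ --- not the $G_u$ of the statement. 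The overlap $\{y_{u_n}>0\}\cap\{y_u=0\}$ you hope to ``arrange away'' is, for upward perturbations, unavoidable.

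The missing idea is that the perturbation must push the state strictly \emph{downward}: take $u_n := u - c_n$ with $c_n \downarrow 0$. Then $y_{u_n} \leq y_u$ by comparison, so $\{y_{u_n}>0\} \subset \{y_u>0\}$ and the problematic overlap is empty by construction; the uniform convergence $y_{u_n} \to y_u$ in $C(\overline\Omega)$ from \cref{prop:well-posed} then gives $\1_{\{y_{u_n}>0\}} \to \1_{\{y_u>0\}}$ pointwise in $\Omega$. To guarantee $u_n \in D$, observe that the same strong-comparison argument shows $c \mapsto y_{u-c}(x)$ is strictly decreasing for every $x \in \Omega$, so the level sets $\{y_{u-c}=0\}$, $c>0$, are pairwise disjoint; at most countably many of them can have positive measure, so choosing $c_n$ outside this countable exceptional set and invoking \cref{prop:gateaux} puts $u_n \in D$ with $F'(u_n)h$ solving the linear equation with coefficient $\1_{\{y_{u_n}>0\}}$. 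With this choice your limit argument goes through verbatim and yields $F'(u_n)h \to G_u h$ in $H^1_0(\Omega)$, hence $G_u \in \partial_B F(u)$. It is worth noting that the paper's own (unlabeled) lemma on $F_h$ and $G_{u,h}$ in \cref{sec:Num} proves the discrete analogue by exactly this device --- the zero components of $y$ are replaced by $-1/k$, leaving the active set unchanged so that $F_h'(u^k)$ \emph{equals} $G_{u,h}$ --- and this is the pattern your construction needed to mirror.
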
 
\begin{remark}
    We refer to \cite[Thm.~3.18]{Christof2017} for a precise characterization of the full Bouligand subdifferential. By replacing one or both convergences with the corresponding weak convergence, we further arrive at different variants of the Bouligand subdifferential; see \cite[Sec.~3.1]{Christof2017} for the precise definitions and the relations between them. For our purposes, however, the strong notion suffices. Furthermore, although the results in this section also hold for arbitrary elements from these weaker notions of the Bouligand subdifferential as well as for slant derivatives, there is no obvious benefit of these choices in our context, and we thus restrict ourselves to \eqref{eq:bouligand_pde} to keep the presentation concise.
\end{remark}
Clearly, $G_u$ is a self-adjoint operator when considered acting from $L^2(\Omega)$ to $L^2(\Omega)$. Furthermore, for this specific choice of the subderivative, we can derive an $L^p$ version of the estimates \eqref{eq:Gu-norm} that will be needed in the following.
\begin{lemma} \label{lem:lin_apriori}
    Let $\frac{d}{2} < p \leq 2$. Then there exists a constant $L_p>0$ such that
    \begin{equation}\label{eq:lin2}
        \| G_u \|_{\Linop(L^p(\Omega), C(\overline\Omega))} \leq L_p\quad\text{for all }u\in U.
    \end{equation}
\end{lemma}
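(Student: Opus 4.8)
The plan is to obtain a uniform $L^p$-to-$C(\overline\Omega)$ bound for the solution operator $G_u$ of the linear equation \eqref{eq:bouligand_pde} by appealing to classical elliptic $L^p$ regularity theory, uniformly in $u$. Given $h\in L^p(\Omega)$ with $\frac d2<p\leq 2$, the function $\eta=G_u h$ solves
\begin{equation*}
    -\Delta\eta + \1_{\{y_u>0\}}\eta = h \quad\text{in }\Omega, \qquad \eta=0\ \text{on }\partial\Omega.
\end{equation*}
The key observation is that the zeroth-order coefficient $c_u:=\1_{\{y_u>0\}}$ is nonnegative and bounded by $1$ \emph{uniformly in $u$}, so that the operator $-\Delta + c_u$ is uniformly elliptic with coefficients in a fixed bounded set. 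The strategy is therefore to rewrite the equation as $-\Delta\eta = h - c_u\eta$ and bootstrap regularity in a way that does not depend on $u$.

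First I would establish an $L^2$-to-$L^2$ (indeed $H^1_0$) bound: testing the weak form with $\eta$ and using $c_u\geq 0$ together with the Poincaré inequality yields $\norm{\eta}_{H^1_0(\Omega)}\leq C\norm{h}_{L^2(\Omega)}\leq C'\norm{h}_{L^p(\Omega)}$, where the last step uses the continuous embedding $L^2(\Omega)\hookrightarrow L^p(\Omega)$ valid since $p\leq 2$ and $\Omega$ is bounded; the constants here are independent of $u$. Next, since $0\leq c_u\leq 1$, the right-hand side $h-c_u\eta$ is controlled in $L^p(\Omega)$ by $\norm h_{L^p}+\norm{\eta}_{L^p}\leq \norm h_{L^p}+C\norm\eta_{H^1_0}$, again uniformly in $u$. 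I would then invoke an elliptic regularity estimate for the Dirichlet problem for $-\Delta$ on the Lipschitz domain $\Omega$: for $\frac d2<p$ the Sobolev number satisfies $2-\frac dp>0$, so solutions of $-\Delta\eta=g$ with $g\in L^p(\Omega)$ belong to a Sobolev space that embeds continuously into $C(\overline\Omega)$. Concretely, one can cite the same regularity result already used for \eqref{eq:maxpde} (namely \cite[Thm.~4.7]{Troltzsch}, which provides exactly an $L^p\to H^1_0\cap C(\overline\Omega)$ estimate for $\frac d2<p$), applied to the linear equation with the nonnegative bounded coefficient $c_u$. Chaining the two estimates gives
\begin{equation*}
    \norm{G_u h}_{C(\overline\Omega)} = \norm\eta_{C(\overline\Omega)} \leq L_p\,\norm h_{L^p(\Omega)},
\end{equation*}
with $L_p$ independent of $u$, which is the claim.

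The main obstacle I anticipate is ensuring that the regularity constant is genuinely \emph{uniform} in $u$, rather than quietly depending on $u$ through the coefficient $c_u$. This is where the structure must be used carefully: the cleanest route is to treat $c_u\eta$ as part of the data and apply an estimate for the bare Laplacian (whose constant obviously does not see $u$), controlling the extra term by the already-established $u$-independent $H^1_0$ bound and the uniform bound $0\leq c_u\leq 1$. An alternative is to invoke a version of the $L^p$ estimate for $-\Delta+c$ that is uniform over nonnegative coefficients bounded by a fixed constant; on a Lipschitz domain in dimension $d\in\{2,3\}$ with $\frac d2<p\leq 2$ this is standard, but one should make sure the cited theorem is stated for variable zeroth-order terms or reduce to the constant-coefficient case as above. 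A secondary technical point is the sharp Sobolev embedding into $C(\overline\Omega)$ on a merely Lipschitz domain, which is why the restriction $p>\frac d2$ is exactly what is needed; since \cite{Troltzsch} already supplies this embedding for the nonlinear problem \eqref{eq:maxpde} under the same hypotheses, reusing it for the linearized equation keeps the argument short and self-consistent.
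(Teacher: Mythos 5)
Your overall strategy (move the coefficient term to the right-hand side and invoke $L^p\to C(\overline\Omega)$ regularity for the bare Laplacian) is viable, and your ``alternative route'' is in fact exactly what the paper does: it writes $-\Delta\eta + a\eta = h$ with $a = \1_{\{y_u>0\}}$, observes $0\le a(x)\le 1$, and directly cites Stampacchia's theorem \cite[Thm.~12.4]{Chipot} together with \cite[Thm.~4.7]{Troltzsch}, whose constant is independent of $a$ and $h$. However, there is a genuine error in your first step. You claim
\begin{equation*}
    \norm{\eta}_{H^1_0(\Omega)} \le C\norm{h}_{L^2(\Omega)} \le C'\norm{h}_{L^p(\Omega)},
\end{equation*}
justifying the second inequality by the embedding $L^2(\Omega)\hookrightarrow L^p(\Omega)$ for $p\le 2$. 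That embedding gives the \emph{reverse} inequality $\norm{g}_{L^p(\Omega)}\le C\norm{g}_{L^2(\Omega)}$; the bound you need, $\norm{h}_{L^2(\Omega)}\le C'\norm{h}_{L^p(\Omega)}$, would require $L^p(\Omega)\hookrightarrow L^2(\Omega)$, which fails for $p<2$. Worse, for $p<2$ a function $h\in L^p(\Omega)$ need not lie in $L^2(\Omega)$ at all, so the intermediate quantity $\norm{h}_{L^2(\Omega)}$ can be infinite and the Cauchy--Schwarz step behind the first inequality is unavailable. As written, your energy estimate therefore fails precisely in the regime $\frac{d}{2}<p<2$ that the lemma is about.

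The repair is standard and keeps your architecture intact: test the weak form with $\eta$ and estimate $\int_\Omega h\eta\,dx \le \norm{h}_{L^p(\Omega)}\norm{\eta}_{L^{p'}(\Omega)}$ with $p'=p/(p-1)$, then use the Sobolev embedding $H^1_0(\Omega)\hookrightarrow L^{p'}(\Omega)$, which holds with a $u$-independent constant because $p>\frac{d}{2}$ and $d\in\{2,3\}$ force $p'<3\le 6$ for $d=3$ (any finite exponent suffices for $d=2$). Together with $c_u\ge 0$ and the Poincaré inequality this yields $\norm{\eta}_{H^1_0(\Omega)}\le C\norm{h}_{L^p(\Omega)}$. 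Your remaining steps are then sound: $\norm{h-c_u\eta}_{L^p(\Omega)}\le \norm{h}_{L^p(\Omega)}+C\norm{\eta}_{L^2(\Omega)}$ uses the bounded-domain embedding in the correct direction, and the final appeal to the $L^p$ estimate for $-\Delta$ closes the argument. Note that the paper avoids the decomposition entirely by applying the cited estimates directly to $-\Delta + a$ uniformly over coefficients $0\le a\le 1$ --- which is shorter, but rests on exactly the uniformity-in-the-coefficient issue you correctly flagged as the main obstacle.
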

\begin{proof}
    Let $h \in L^p(\Omega)$ with $\frac{d}{2} < p \leq 2$ and $u\in U$ be arbitrary. From \cref{prop:Gu}, we have that $\eta = G_uh$ satisfies
    \begin{equation*}
        \left\{
            \begin{aligned}
                - \Delta \eta + a\eta &= h \quad \text {in } \Omega,\\
                \eta &= 0 \quad \text {on } \partial\Omega
            \end{aligned}
        \right.
    \end{equation*}
    for some $a \in L^\infty(\Omega)$ with $0 \leq a(x) \leq 1$ for a.e. $x \in \Omega$. 
    Stampacchia's theorem \cite[Thm.~12.4]{Chipot} and \cite[Thm.~4.7]{Troltzsch} thus ensure $\eta \in C(\overline{\Omega}) \cap H^{1}_0(\Omega)$ and satisfies
    \begin{equation}
        \| \eta \|_{C(\overline{\Omega})} \leq L_p \| h\|_{L^p(\Omega)} 
    \end{equation}
    for some constant $L_p$ independent of $a$ and $h$, i.e., \eqref{eq:lin2}.
\end{proof}

Finally, the following example shows that the mapping $u\mapsto G_{u}h$ is in general not continuous, which is the main difficulty in showing convergence of a modified Landweber method.
\begin{example} \label{ex:discon} 
    Let $\Omega=\{x\in\R^2: |x|_2 \leq 1\}$ be the unit ball in $\R^2$. For each $\epsilon >0$, we set 
    \begin{equation*}
        u_\epsilon(x) := \epsilon \left( 5 - x_1^2 - x_2^2 \right).
    \end{equation*} 
    Then $u_\epsilon$ tends to $\bar u := 0$ as $\epsilon \to 0^+$. Furthermore, we have $y_\epsilon(x) = F(u_\epsilon)(x) = \epsilon (1- x_1^2 -x_2^2) >0$ for all $x=(x_1,x_2) \in \Omega$. It follows that $\1_{\{y_\epsilon >0\}}(x) = 1$ almost everywhere in $\Omega$, and hence $G_{u_\epsilon}\equiv G$ 
    for the operator 
    $G: L^2(\Omega) \to L^2(\Omega)$ defined by $z:=Gh$ being a unique solution to 
    \begin{equation*}
        \left\{
            \begin{aligned}
                -\Delta z + z &= h \quad \text{in } \Omega,\\
                z &= 0\quad\text{on }\partial\Omega.
            \end{aligned}
        \right.
    \end{equation*}
    On the other hand, $\bar z := G_{\bar u}h$ satisfies 
    \begin{equation*}
        \left\{
            \begin{aligned}
                -\Delta \bar z &= h \quad \text{in } \Omega,\\
                \bar z &= 0\quad\text{on }\partial\Omega
            \end{aligned}
        \right.
    \end{equation*} 
    for any $h\in L^2(\Omega)$.
    We thus have $z \neq \bar z$ whenever $h \neq 0$. Therefore, if $h\neq 0$, 
    \begin{equation*}
        G_{u_\epsilon}h \nrightarrow G_{\bar u}h \quad \text{as } \epsilon \to 0^+ .
    \end{equation*} 
\end{example}

%%%% The generalized tangential cone condition
\subsection{Generalized tangential cone condition}

We now verify that the solution mapping for our example satisfies the generalized tangential cone condition \eqref{eq:GTCC}. We begin with a crucial lemma deriving a ``pointwise'' tangential cone condition.
\begin{lemma} \label{lem:gtcc-key}
    Let $u, \hat u \in L^2(\Omega)$ and $\frac{d}{2} < p <2$. Then, one has 
    \begin{equation*}
        \|F(\hat u) - F(u) - G_u(\hat u - u) \|_{L^2(\Omega)} \leq L_p |\Omega|^{1/2}M(u,\hat u)^{1/p'} \|F(\hat u) - F(u) \|_{L^2(\Omega)}
    \end{equation*} 
    with $p' = \frac{2p}{2-p}$, $L_p$ as in \cref{lem:lin_apriori}, and
    \begin{equation*}
        M(u,\hat u) := \left| \{ y_u \leq 0, y_{\hat u} >0 \} \cup \{ y_u > 0, y_{\hat u} \leq 0 \} \right|.
    \end{equation*}     
\end{lemma}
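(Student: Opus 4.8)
The plan is to introduce the linearization error $e := F(\hat u) - F(u) - G_u(\hat u - u)$ together with the nonlinear residual $w := F(\hat u) - F(u) = y_{\hat u} - y_u$, and to show that $e$ itself solves an equation of exactly the form defining $G_u$, but with a source term that is supported on the ``sign-disagreement'' set whose measure is $M(u,\hat u)$. First I would subtract the two state equations \eqref{eq:maxpde} for $y_{\hat u}$ and $y_u$ to obtain $-\Delta w + (y_{\hat u}^+ - y_u^+) = \hat u - u$, and recall that $\eta := G_u(\hat u - u)$ solves $-\Delta \eta + \1_{\{y_u>0\}}\eta = \hat u - u$ by \cref{prop:Gu}. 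Subtracting these two identities and substituting $\eta = w - e$ then yields
\[
    -\Delta e + \1_{\{y_u>0\}} e = \1_{\{y_u>0\}}\, w - (y_{\hat u}^+ - y_u^+) =: g .
\]
In particular $e = G_u g$, so \cref{lem:lin_apriori} will apply as soon as $g$ is controlled in $L^p(\Omega)$.

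The heart of the argument — and the step I expect to be the main obstacle — is the pointwise analysis of $g$. I would partition $\Omega$ according to the signs of $y_u$ and $y_{\hat u}$. On $\{y_u>0,\, y_{\hat u}>0\}$ and on $\{y_u\le 0,\, y_{\hat u}\le 0\}$ a direct check gives $g=0$; on $\{y_u>0,\, y_{\hat u}\le 0\}$ one finds $g = y_{\hat u}$, and on $\{y_u\le 0,\, y_{\hat u}>0\}$ one finds $g = -y_{\hat u}$. Hence $g$ vanishes off the set whose measure is precisely $M(u,\hat u)$. Moreover, on each of the two nontrivial pieces the sign configuration forces $|y_{\hat u}| \le |y_{\hat u}-y_u| = |w|$ (for instance, on $\{y_u>0,\,y_{\hat u}\le 0\}$ one has $|w| = y_u + |y_{\hat u}| \ge |y_{\hat u}|$), so that the pointwise bound $|g| \le |w|\,\1_{M(u,\hat u)}$ holds, where $\1_{M(u,\hat u)}$ denotes the indicator of the disagreement set.

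It then remains to combine the elliptic estimate with Hölder's inequality. From \cref{lem:lin_apriori} and $e = G_u g$, I get $\|e\|_{C(\overline\Omega)} = \|G_u g\|_{C(\overline\Omega)} \le L_p \|g\|_{L^p(\Omega)}$, and since $C(\overline\Omega) \hookrightarrow L^2(\Omega)$ with constant $|\Omega|^{1/2}$, also $\|e\|_{L^2(\Omega)} \le |\Omega|^{1/2}\|e\|_{C(\overline\Omega)}$. To estimate $\|g\|_{L^p}$ I would apply Hölder on the disagreement set with exponents $2/p$ and $2/(2-p)$ to the pointwise bound, obtaining
\[
    \|g\|_{L^p(\Omega)} \le M(u,\hat u)^{(2-p)/(2p)}\,\|w\|_{L^2(\Omega)} = M(u,\hat u)^{1/p'}\,\|w\|_{L^2(\Omega)},
\]
using $1/p' = (2-p)/(2p)$. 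Chaining these inequalities produces exactly the claimed estimate. Finally, I note that the restriction $p<2$ guarantees $p'<\infty$ so that Hölder is meaningful, while $p>d/2$ is precisely the hypothesis under which \cref{lem:lin_apriori} supplies the $L^p$-to-$C(\overline\Omega)$ bound.
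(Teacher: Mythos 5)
Your proposal is correct and follows essentially the same route as the paper's proof: both derive the PDE $-\Delta \omega + \1_{\{y_u>0\}}\omega = \bigl(\1_{\{y_u>0\}}-\1_{\{y_{\hat u}>0\}}\bigr)y_{\hat u}$ for the linearization error, bound the right-hand side pointwise by $|y_{\hat u}-y_u|$ times the indicator of the sign-disagreement set, and conclude via \cref{lem:lin_apriori}, H\"older's inequality with exponents $2/p$ and $2/(2-p)$, and the embedding $C(\overline\Omega)\hookrightarrow L^2(\Omega)$. The only difference is cosmetic: you identify the source term by a four-case sign analysis, while the paper uses the algebraic sandwich $0 \geq a \geq \bigl(\1_{\{y_u>0,\,y_{\hat u}\leq 0\}}-\1_{\{y_u\leq 0,\,y_{\hat u}>0\}}\bigr)(y_{\hat u}-y_u)$ to reach the same pointwise bound.
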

\begin{proof}
    Setting $y := y_u$, $\hat y:= y_{\hat u}$, $\zeta := G_u(\hat u - u)$, and $\omega := \hat y - y - \zeta$, we have from the definitions that
    \begin{align*}
        - \Delta \hat y + {\hat y}^+ &= \hat u,\\
        - \Delta y + y^+ &= u,\\
        - \Delta \zeta + \1_{\{y>0\}} \zeta &= \hat u - u.
    \end{align*}
    This implies that 
    \begin{equation*}
        -\Delta \omega + \1_{\{y>0\}} \omega = \left( \1_{\{y>0\} }- \1_{ \{\hat y>0\} } \right)\hat y.
    \end{equation*}
    By simple computation, it follows that
    \begin{equation*}
        a := \left(\1_{\{y>0\} }- \1_{ \{\hat y>0\} } \right)\hat y  = \left(\1_{\{y>0, \hat y \leq 0\} }- \1_{ \{y \leq 0, \hat y>0\} } \right)\hat y 
    \end{equation*}
    and hence
    \begin{equation*}
        0 \geq a \geq \left( \1_{\{y>0, \hat y \leq 0\} }- \1_{ \{y \leq 0, \hat y>0\} } \right)(\hat y - y).
    \end{equation*}
    Consequently, 
    \begin{equation*}
        |a(x)| \leq |e(x)||\hat y(x) - y(x)| \quad \text{for a.e.} \ x \in \Omega
    \end{equation*}
    with 
    \begin{equation*}
        e: = \left( \1_{\{y>0, \hat y \leq 0\} }- \1_{ \{y \leq 0, \hat y>0\} } \right).
    \end{equation*}
    From this, \cref{lem:lin_apriori}, and the H\"{o}lder inequality, we obtain
    \begin{equation*}
        \begin{aligned}
            \|\omega \|_{C(\overline{\Omega})} & \leq L_p \|a\|_{L^p(\Omega)} \\
                                               & \leq L_p \|\hat y - y \|_{L^2(\Omega)} \|e\|_{L^{p'}(\Omega)} \\
                                               & \leq L_p M(u,\hat u)^{1/p'} \|\hat y - y \|_{L^2(\Omega)}. 
        \end{aligned}
    \end{equation*}
    This together with the inequality $\|\omega\|_{L^2(\Omega)} \leq \|\omega \|_{C(\overline{\Omega})} |\Omega|^{1/2}$ implies the desired estimate.
\end{proof}

The following result verifies that the solution mapping satisfies a generalized tangential cone condition, which is close to the classical tangential cone condition \cite{Scherzer1995,Hanke1995,Margotti2015} and will be crucial in the convergence analysis of the following section. 
\begin{proposition} \label{cor:TCC} 
    Let $\bar u\in L^2(\Omega)$ and $\mu>0$ and assume that 
    \begin{equation} \label{eq:cond-meas}
        L_p |\Omega|^{1/2} \left(2|\{F(\bar u) = 0\}| \right)^{1/p'} < \mu
    \end{equation} 
    with $p' : =\frac{2p}{2-p}$ and $L_p$ as in \cref{lem:lin_apriori}.
    Then there exists $\rho >0$ such that \eqref{eq:GTCC} holds in $\bar u$ for $\rho$ and $\mu$.
\end{proposition}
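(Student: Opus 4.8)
The plan is to reduce the statement, via \cref{lem:gtcc-key}, to a purely measure-theoretic estimate on the ``sign-change'' set $M(u,\hat u)$, and then to control that set by the fixed zero level set $\{F(\bar u)=0\}$ using the $C(\overline\Omega)$-Lipschitz continuity from \cref{prop:well-posed}. Write $\bar y := F(\bar u)$ and fix $p\in(d/2,2)$ as in the hypothesis. By \cref{lem:gtcc-key}, for any $u,\hat u$ the linearization error is bounded by $L_p |\Omega|^{1/2} M(u,\hat u)^{1/p'}\norm{F(\hat u)-F(u)}_{L^2(\Omega)}$, so it suffices to find $\rho>0$ with
\[
    L_p |\Omega|^{1/2} M(u,\hat u)^{1/p'} \leq \mu \quad \text{for all } u,\hat u \in \overline B_{L^2(\Omega)}(\bar u,\rho),
\]
since \eqref{eq:GTCC} then follows immediately with the stated $\mu$.

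First I would estimate $M(u,\hat u)$ uniformly in terms of $\bar y$. Setting $\epsilon := \max\{\norm{y_u-\bar y}_{C(\overline\Omega)},\,\norm{y_{\hat u}-\bar y}_{C(\overline\Omega)}\}$, \cref{prop:well-posed} gives $\epsilon \leq C_F\rho$ whenever $u,\hat u\in\overline B_{L^2(\Omega)}(\bar u,\rho)$. If $x\in\{y_u\leq 0,\,y_{\hat u}>0\}$, then $\bar y(x)\leq y_u(x)+\epsilon\leq\epsilon$ and $\bar y(x)>y_{\hat u}(x)-\epsilon>-\epsilon$, and the complementary piece $\{y_u>0,\,y_{\hat u}\leq 0\}$ yields the same two-sided bound. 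Hence
\[
    M(u,\hat u) \leq \left|\{-\epsilon<\bar y\leq\epsilon\}\right| \leq \left|\{|\bar y|\leq\epsilon\}\right|.
\]
Since $\bar y\in C(\overline\Omega)$, the sets $\{|\bar y|\leq\epsilon\}$ are measurable and decrease to $\{\bar y=0\}=\{F(\bar u)=0\}$ as $\epsilon\downarrow 0$; as $|\Omega|<\infty$, continuity of measure from above gives $|\{|\bar y|\leq\epsilon\}|\to|\{F(\bar u)=0\}|$.

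Finally I would close the argument using hypothesis \eqref{eq:cond-meas}. From the previous limit,
\[
    \lim_{\epsilon\to 0^+} L_p |\Omega|^{1/2}\left|\{|\bar y|\leq\epsilon\}\right|^{1/p'} = L_p |\Omega|^{1/2}\left|\{F(\bar u)=0\}\right|^{1/p'} \leq L_p |\Omega|^{1/2}\left(2|\{F(\bar u)=0\}|\right)^{1/p'} < \mu,
\]
so there is an $\epsilon_0>0$ with $L_p |\Omega|^{1/2}|\{|\bar y|\leq\epsilon_0\}|^{1/p'}<\mu$. Choosing $\rho := \epsilon_0/C_F$, any $u,\hat u$ in the corresponding ball satisfy $\epsilon\leq C_F\rho=\epsilon_0$, and the monotonicity of $\epsilon\mapsto|\{|\bar y|\leq\epsilon\}|$ together with the bound on $M(u,\hat u)$ gives the required inequality.

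The only genuinely nontrivial step will be the reduction of the geometric set $M(u,\hat u)$ to the fixed level set $\{F(\bar u)=0\}$: this is precisely where the $C(\overline\Omega)$-Lipschitz estimate of \cref{prop:well-posed} is indispensable, as mere $L^2$-continuity would not force $y_u$ and $y_{\hat u}$ to be uniformly close to $\bar y$, and hence would not confine a sign change to the thin strip where $\bar y$ is within $\epsilon$ of zero. The factor $2$ in \eqref{eq:cond-meas} supplies exactly the slack needed to absorb the gap between $|\{|\bar y|\leq\epsilon\}|$ and its limit $|\{F(\bar u)=0\}|$ for small but positive $\epsilon$, which is also why the argument works even in the degenerate case $|\{F(\bar u)=0\}|=0$, where $M(u,\hat u)$ can be driven to $0$ outright.
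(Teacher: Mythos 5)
Your proof is correct and follows essentially the same route as the paper: reduce to the measure bound via \cref{lem:gtcc-key}, confine the sign-change set $M(u,\hat u)$ to the strip $\{|\bar y|\leq\epsilon\}$ using the $C(\overline\Omega)$-Lipschitz estimate of \cref{prop:well-posed}, and shrink $\rho$ (equivalently $\epsilon$) until the resulting constant drops below $\mu$. The only discrepancy is interpretive: your union bound $M(u,\hat u)\leq|\{|\bar y|\leq\epsilon\}|$ is sharper than the paper's $M(u,\hat u)\leq 2|\{|\bar y|\leq\epsilon\}|$, so in your argument the factor $2$ in \eqref{eq:cond-meas} is simply unused slack — it is not, as your closing remark suggests, what absorbs the gap between $|\{|\bar y|\leq\epsilon\}|$ and its limit; that gap is handled by the strict inequality together with continuity of measure from above.
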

\begin{proof}
    Set $\bar y = F(\bar u)$ and let $\rho>0$ be arbitrary. Due to \cref{prop:well-posed}, we then have for any $u \in \overline B_{L^2(\Omega)}(\bar u, \rho)$ that
    \begin{equation*}
        \| \bar y - y_u \|_{C(\overline{\Omega})} \leq  C_F \|\bar u - u \|_{L^2(\Omega)} \leq  C_F\rho =:\epsilon.
    \end{equation*}
    Hence, for any $u \in \overline B_{L^2(\Omega)}(\bar u, \rho)$, it follows that
    \begin{equation*}
        -\epsilon + y_u(x) \leq \bar y \leq \epsilon + y_u(x) 
    \end{equation*} for all $x \in \bar\Omega$. 
    This implies for any $u, \hat u \in \overline B_{L^2(\Omega)}(\bar u, \rho)$ that 
    \begin{align*}
        & \{y_{u}>0, y_{\hat u} \leq 0\} \subset \{-\epsilon \leq \bar y \leq \epsilon\},\\
        & \{y_{u} \leq 0, y_{\hat u} > 0\} \subset \{-\epsilon \leq \bar y \leq \epsilon\},
    \end{align*}
    and thus 
    \begin{equation*}
        M(u,  \hat u) \leq 2|\{0 \leq |\bar y| \leq \epsilon\}|.
    \end{equation*}
    From condition \eqref{eq:cond-meas}, we have
    \begin{equation*}
        \lim_{\epsilon \to 0^+} L_p |\Omega|^{1/2} \left(2|\{0 \leq |\bar y| \leq \epsilon\} | \right)^{1/p'} <\mu.
    \end{equation*}
    Note that $\epsilon \to 0^+$ as $\rho \to 0^+$. Hence, choosing $\rho>0$ small enough such that
    \begin{equation*}
        L_p |\Omega |^{1/2} \left(2|\{0 \leq |\bar y| \leq  \epsilon\}|\right)^{1/p'} \leq \mu
    \end{equation*}
    yields that 
    \begin{equation*}
        \|F(\hat u) - F(u) - G_u(\hat u - u) \|_{L^2(\Omega)} \leq \mu \|F(\hat u) - F(u) \|_{L^2(\Omega)}
    \end{equation*}
    for all $u, \hat u \in \overline B_{L^2(\Omega)}(\bar u, \rho)$.
\end{proof}
The condition \eqref{eq:cond-meas} is related to -- but weaker than -- the active set condition introduced in \cite{Wachsmuth:2011a,Wachsmuth:2011b} in order to derive strong convergence rates for the Tikhonov regularization of singular and non-smooth optimal control problems. We stress that the condition \eqref{eq:cond-meas} does not require that $F$ is differentiable at the exact solution $u^\dag$.

\subsection{Bouligand--Landweber iteration}

The results obtained so far show that the solution mapping $F$ to \eqref{eq:maxpde} together with $u\mapsto G_u$ with $G_u$ the Bouligand subderivative given in \cref{prop:Gu} satisfies the assumptions of \cref{sec:Alg}, provided that condition \eqref{eq:cond-meas} is valid. We also note that in this case $F$ is injective, i.e., $u^\dag$ is the unique solution to \eqref{eq:general-inverse}. 
We can thus use $G_u$ in the modified Landweber iteration \eqref{eq:modified-landweber} to obtain a convergent \emph{Bouligand--Landweber method} for the iterative regularization of the non-smooth ill-posed problem $F(u)=y$. 
\begin{corollary}\label{cor:bouligandlandweber}
    Assume that \eqref{eq:cond-meas} holds for $u^\dag\in L^2(\Omega)$. Then there exists $\rho>0$ and $0 < \lambda\leq \Lambda$ such that for all starting points $u^0\in \overline B(u^\dag,\rho)$ and step sizes $w_n\in[\lambda,\Lambda]$, the Bouligand--Landweber iteration \eqref{eq:modified-landweber} stopped according to the discrepancy principle \eqref{eq:discrepancy} is a well-posed and strongly convergent regularization method.
\end{corollary}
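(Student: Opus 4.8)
The plan is to regard \eqref{eq:maxpde} as an instance of the abstract inverse problem \eqref{eq:general-inverse} with $U=Y=L^2(\Omega)$, equipped with the forward operator $F$ and the Bouligand subderivative $u\mapsto G_u$ from \cref{prop:Gu}, and then merely to collect the facts established in this section in order to verify \crefrange{ass:cont}{ass:adj_bounded}; once these hold, the claim follows at once from \cref{lem:aux,thm:free,cor:asy-stable,thm:REG}. Concretely, \cref{ass:cont} is precisely \cref{lem:weak-con}, and the uniform bound in \cref{ass:bouligand} with constant $L$ (valid for every $u\in L^2(\Omega)$, hence on any ball) is the first inequality in \eqref{eq:Gu-norm}.

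For \cref{ass:gtcc} I would invoke \cref{cor:TCC} with $\bar u=u^\dag$: the standing hypothesis \eqref{eq:cond-meas} furnishes a $\mu\in(0,1)$ for which the (fixed, measure-dependent) left-hand side of \eqref{eq:cond-meas} is strictly smaller, and \cref{cor:TCC} then produces a radius $\rho>0$ on which \eqref{eq:GTCC} holds at $u^\dag$ with this $\mu$. For \cref{ass:adj_bounded} the natural choice is $Z:=H^1_0(\Omega)$, which embeds compactly into $U=L^2(\Omega)$ by the Rellich--Kondrachov theorem. Since $G_u$ is self-adjoint on $L^2(\Omega)$, one has $G_u^*=G_u$, so $\mathcal{R}(G_u^*)=\mathcal{R}(G_u)\subset H^1_0(\Omega)$, and the second bound in \eqref{eq:Gu-norm} supplies the uniform estimate $\|G_u^*\|_{\Linop(Y,Z)}\le\hat L$; thus \cref{ass:adj_bounded} holds.

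The one genuine verification is the compatibility of the constants. With $\mu\in(0,1)$ fixed as above, I would first choose $\Lambda>0$ small enough that $5\Lambda L^2<1-\mu$, securing \eqref{choice-aux}; since the quantity $2-2\mu-\Lambda L^2$ is then strictly positive while the first term of \eqref{choice} tends to $0$ as $\tau\to\infty$, I can then pick $\tau>1$ large enough and $\lambda\in(0,\Lambda]$ so that \eqref{choice} (and hence \eqref{choice2}) holds as well. With these parameters, \cref{lem:aux} yields well-posedness (finiteness of $N_\delta$ and the monotonicity estimate), \cref{thm:free} gives strong convergence for exact data, and \cref{cor:asy-stable} together with \cref{thm:REG} gives asymptotic stability and the strong regularization property as $\delta\to0$. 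Finally, injectivity of $F$---if $F(u_1)=F(u_2)=y$, then $u_1=-\Delta y+y^+=u_2$---shows that $u^\dag$ is the unique solution of \eqref{eq:general-inverse}, so by the concluding statement of \cref{thm:REG} the full sequence $u_{N_k}^{\delta_k}$ converges strongly to $u^\dag$. I expect the parameter bookkeeping just described, rather than any single deep step, to be the only point requiring real care.
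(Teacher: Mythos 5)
Your proposal is correct and follows essentially the same route as the paper's proof: take $U=Y=L^2(\Omega)$, verify \crefrange{ass:cont}{ass:adj_bounded} via \cref{lem:weak-con}, \eqref{eq:Gu-norm}, \cref{cor:TCC} (with $\bar u=u^\dag$), and the self-adjointness of $G_u$ with $Z=H^1_0(\Omega)$, then invoke \cref{thm:free,thm:REG}. Your explicit parameter bookkeeping (choosing $\Lambda$ with $5\Lambda L^2<1-\mu$, then $\tau$ large and $\lambda\in(0,\Lambda]$ to secure \eqref{choice}, \eqref{choice2}, and \eqref{choice-aux}) and the one-line injectivity argument $u=-\Delta y+y^+$ merely make explicit what the paper leaves implicit.
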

\begin{proof}
    We merely have to argue that \crefrange{ass:cont}{ass:adj_bounded} of \cref{sec:Alg} are satisfied. Taking $U = Y = L^2(\Omega)$, \cref{ass:cont,ass:bouligand} follow from \cref{prop:well-posed,lem:weak-con}, respectively, where we can take any $\rho_0>0$ in the latter.
    Under the condition \eqref{eq:cond-meas}, \cref{cor:TCC} guarantees that \cref{ass:gtcc} holds for some choice of $\rho>0$. Finally, \cref{ass:adj_bounded} holds for $Z=H^1_0(\Omega)$ due to the self-adjointness of $G_u$ and \cref{lem:weak-con} again. The claim now follows from \cref{thm:free,thm:REG}.
\end{proof}
We note that if $|\{ y_n^\delta = 0 \}| =0$ with $y_{n}^\delta :=F(u_n^\delta)$ for some $n\in \N$, we obtain from \cref{prop:gateaux} that $F$ is G\^{a}teaux differentiable in $u_n^\delta$ and that $G_{u_n^\delta} = F'(u_n^\delta)$. Hence in this case, the corresponding Bouligand--Landweber step \eqref{eq:modified-landweber} coincides with the classical Landweber step \eqref{eq:landweber_smooth}.

\begin{remark}
    The results of this section -- and hence of this work -- can be extended to the case of piecewise continuously differentiable nonlinearities, i.e., to a forward operator given as the solution mapping to
    \begin{equation} 
        \left\{
            \begin{aligned}
                A y + f(y) &= u \quad \text{in } \Omega,\\
                y &=0\quad \text{on } \partial \Omega,
            \end{aligned}
        \right.
    \end{equation} 
    where $A$ is a second-order strongly uniformly elliptic operator and $f$ is a superposition operator defined by a piecewise continuously differentiable and non-decreasing function; see \cref{app:piecewise-diff}.
\end{remark}

%%% Numerical results

\section{Numerical experiments} \label{sec:Num}

In this section, we present results of numerical experiments illustrating the performance of the Bouligand--Landweber iteration for the model problem \eqref{eq:maxpde-intro}. Although our focus is not on the numerical approximation, we first give a short description of our discretization scheme and the solution of the non-smooth PDE \eqref{eq:maxpde-intro} using a semi-smooth Newton (SSN) method for the sake of completeness. The last subsection then contains numerical examples.

\subsection{Discretization and semi-smooth Newton method}

For the discretization of the non-smooth semilinear elliptic problem \eqref{eq:maxpde} and the generalized linearization equation \eqref{eq:bouligand_pde}, we use standard continuous piecewise linear finite elements (FE), see, e.g., \cite{Knabner-Angermann,Glowinski1984} . From now on, we restrict ourselves to the case $\Omega \subset \R^2$. Denote by $\mathcal{T}_h$ the triangulation of $\Omega$ with the discretization parameter $h$ indicating the maximum length of the edges of all the triangles of $\mathcal{T}_h$. For each triangulation $\mathcal{T}_h$, let $V_h$ be the finite-dimensional subspace of $H^1_0(\Omega)$ consisting of functions whose restrictions to a triangle $T \in \mathcal{T}$ are polynomials of first degree. By $\{\varphi_j\}_{j=1}^n$, we denote the basis of $V_h$ corresponding to the set of nodes $\mathcal{N}_h:=\{p_1,\dots,p_n\}$, i.e., $V_h$ is spanned by functions $\varphi_1,\dots,\varphi_n$ and $\varphi_j(p_i) = \delta_{ji}$ where $(\delta_{ji})_{j,i=1}^n$ is the Kronecker delta. 
Note that for $v_h\in V_h$, we do not necessarily have $v_h^+\in V_h$. We thus use a mass lumping approach to discretize the non-smooth semilinear elliptic equation \eqref{eq:maxpde-intro} in weak form as
\begin{equation}
    \label{dis-ori-P} 
    \int_{\Omega} \nabla y_h \cdot \nabla v_h \,dx + \frac{1}{3}\sum_{T \in \mathcal{T}_h} |T| \sum_{p_i \in \mathcal{N}_h\cap\overline T} \max(y_h(p_i),0)v_h(p_i) = \int_{\Omega} u_hv_h\,dx \quad \text{for all } v_h \in V_h,
\end{equation} 
where $y_h$ and $u_h \in V_h$ denote the FE approximation of $y$ and $u$, respectively, and $\overline{T}$ stands for the closure of $T$ (i.e., the inner sum is over all vertices of the triangle $T$). 
By a slight abuse of notation, from now we on write $y\in\R^n$ and $u\in\R^n$ instead of $(y_h(p_i))_{i=1}^n$ and $(u_h(p_i))_{i=1}^n$, respectively. The discrete equation \eqref{dis-ori-P} is then equivalent to the nonlinear algebraic system
\begin{equation}
    A y + D \max(y,0) = Mu \label{coor-ori-P}
\end{equation} 
with the stiffness matrix $A := \left( (\nabla\varphi_j, \nabla \varphi_i)_{L^2(\Omega)} \right)_{i,j=1}^n$, the mass matrix $M := \left( (\varphi_j, \varphi_i)_{L^2(\Omega)} \right)_{i,j=1}^n$, the lumped mass matrix $D := \frac{1}{3}\diag\left(\omega_1,\dots, \omega_n\right)$, $\omega_i :=|\{\varphi_i\neq 0\}|$, and $\max(\cdot, 0): \R^n \to \R^n$ the componentwise max-function.

Similarly, the equation \eqref{eq:bouligand_pde} characterizing the Bouligand subderivative is discretized as
\begin{equation}\label{dis-linearization1} 
    \int_{\Omega} \nabla \eta_h \cdot \nabla v_h \,dx + \int_{\Omega} \1_{\{y>0\}} \eta_h v_h\,dx = \int_{\Omega} w_hv_h\,dx \quad \text{for all } v_h \in V_h. 
\end{equation}
Here $\eta_h$ and $w_h$ stand for the FE-approximation of $\eta$ and $w$, respectively.
Using the continuity of integrands and the two-dimensional trapezoidal method, the second term on the left hand side of \eqref{dis-linearization1} can be approximated by 
\begin{equation*}
    \frac{1}{3}\sum_{T \in \mathcal{T}_h} |T| \sum_{p_i \in \mathcal{N}_h\cap\overline T \cap \{y>0 \} } \eta_h(p_i) v_h(p_i) = \frac{1}{3}\sum_{T \in \mathcal{T}_h} |T| \sum_{p_i \in \mathcal{N}_h\cap\overline T} \1_{\{y(p_i) >0\}} \eta_h(p_i) v_h(p_i)
\end{equation*} 
for $h$ small enough. 
From this and $y(p_i) = y_h(p_i)$, the discrete equation \eqref{dis-linearization1} can be rewritten as 
\begin{equation}
    \label{dis-linearization} 
    \begin{multlined}[t][0.9\textwidth]
        \int_{\Omega} \nabla \eta_h \cdot \nabla v_h \,dx + \frac{1}{3}\sum_{T \in \mathcal{T}_h} |T| \sum_{p_i \in \mathcal{N}_h\cap\overline T} \1_{\{y_{h}(p_i)>0\}}(p_i) \eta_h(p_i) v_h(p_i) \\
        = \int_{\Omega} w_hv_h\,dx \quad \text{for all } v_h \in V_h.
    \end{multlined}
\end{equation}
Again, by a slight abuse of notation, we denote the coefficient vectors $(\eta_h(p_i))_{i=1}^n$ and $(w_h(p_i))_{i=1}^n$ by $\eta\in\R^n$ and $w\in\R^n$, respectively.
The discrete equation \eqref{dis-linearization} thus becomes the linear algebraic system
\begin{equation}
    (A + K_y)\eta = Mw, \label{coor-linearization}
\end{equation}
where
the matrix $K_y$ is defined by
\begin{equation*}
    K_y = \frac{1}{3} \diag\left(\omega_i \1_{\{y_i >0\}} \right) \in \R^{n \times n}.
\end{equation*}
By standard arguments, the variational equations \eqref{dis-ori-P} and \eqref{dis-linearization} as well as the corresponding algebraic systems \eqref{coor-ori-P} and \eqref{coor-linearization} admit unique solutions, whose relation is given in the following lemma.
\begin{lemma}
    Let $F_h:\R^n\mapsto \R^n$ be the mapping that assigns $u \in \R^n$ to the unique solution $y \in \R^n$ to \eqref{coor-ori-P}. Similarly, denote for arbitrary $u\in \R^n$ by $G_{u,h}:\R^n\to \R^n$ the mapping that assigns $w\in \R^n$ to the unique solution $\eta\in\R^n$ to \eqref{coor-linearization}.
    Then $G_{u,h}\in \partial_B F_h(u)$.
\end{lemma}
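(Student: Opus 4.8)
The plan is to adapt the argument of \cref{prop:Gu} to the discrete setting. The key observation is that at any point $u$ where the state $y=F_h(u)$ has \emph{no} vanishing components, $F_h$ is classically differentiable with Jacobian exactly equal to $G_{u,h}$; the Bouligand subdifferential is then reached by approximating an arbitrary $u$ by such ``regular'' points whose active set agrees with that of $y$.

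First I would characterize the differentiability of $F_h$. Writing the defining relation \eqref{coor-ori-P} as $\Phi(u,y):=Ay+D\max(y,0)-Mu=0$ and using that the componentwise map $t\mapsto\max(t,0)$ is differentiable away from the origin with derivative $\1_{\{t>0\}}$, the partial Jacobian $\partial_y\Phi(u,y)=A+D\diag(\1_{\{y_i>0\}})$ is well-defined whenever $y$ has no zero component and, as the sum of the symmetric positive definite stiffness matrix $A$ and a nonnegative diagonal matrix, is invertible. The implicit function theorem then shows that $F_h$ is $C^1$ near such a $u$, with
\[
    F_h'(u)=\bigl(A+D\diag(\1_{\{y_i>0\}})\bigr)^{-1}M=(A+K_y)^{-1}M,
\]
which is precisely the matrix representing $G_{u,h}$, since $D\diag(\1_{\{y_i>0\}})=K_y$ and $G_{u,h}w=(A+K_y)^{-1}Mw$. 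Note that only this ``if'' direction is needed; the converse characterization of nondifferentiability can be avoided entirely.

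Next, given an arbitrary $u\in\R^n$ with $y=F_h(u)$, I would construct a sequence $u_k\to u$ of differentiability points with constant Jacobian $G_{u,h}$. Rather than perturbing $u$, I would perturb the \emph{state}: fix $p\in\R^n$ with $p_i>0$ on the biactive set $\{i:y_i=0\}$ and $p_i=0$ otherwise, and set $y_k:=y-\tfrac1k p$. Then $y_k$ has no vanishing component for every $k\geq1$ (the components with $y_i\neq0$ are untouched, while the formerly zero ones become strictly negative), so $\1_{\{(y_k)_i>0\}}=\1_{\{y_i>0\}}$ for every $i$ and hence $K_{y_k}=K_y$. Defining $u_k:=M^{-1}\bigl(Ay_k+D\max(y_k,0)\bigr)$ makes $y_k$ the (unique) solution of \eqref{coor-ori-P} for datum $u_k$, so $F_h(u_k)=y_k$ and $F_h$ is differentiable at $u_k$ by the first step; since $y_k\to y$ and the right-hand side depends continuously on $y_k$, we obtain $u_k\to u$.

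Finally, combining the two steps, $F_h'(u_k)=(A+K_{y_k})^{-1}M=(A+K_y)^{-1}M=G_{u,h}$ is independent of $k$, so trivially $F_h'(u_k)\to G_{u,h}$; as the $u_k$ are differentiability points converging to $u$, this yields $G_{u,h}\in\partial_BF_h(u)$. The genuinely delicate point is the sign bookkeeping in the perturbation, namely arranging that the active set of $y_k$ exactly reproduces that of $y$ while keeping all components nonzero, which I expect to be the crux; perturbing the state directly rather than the datum is what makes this transparent, whereas the implicit-function and invertibility arguments are routine given that $A$ and $M$ are positive definite.
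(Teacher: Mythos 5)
Your proposal is correct and follows essentially the same route as the paper's proof: establish that $F_h$ is differentiable with Jacobian $(A+K_y)^{-1}M = G_{u,h}$ wherever the state has no vanishing components, then approximate an arbitrary $u$ by perturbing the \emph{state} (pushing the zero components slightly negative so the active set is preserved), pulling back to data via $u_k = M^{-1}\bigl(Ay_k + D\max(y_k,0)\bigr)$, and observing that the Jacobians along this sequence are constantly equal to $G_{u,h}$. The only cosmetic difference is that you justify the first step by the implicit function theorem (relying on the already-established global uniqueness of solutions to identify the local branch with $F_h$), whereas the paper argues directly that $F_h$ is locally affine at such points via its Lipschitz continuity; both justifications are valid and rest on the same observation that the active set is locally constant.
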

\begin{proof}
    First, the Lipschitz continuity of $F_h$ implies that for any $\tilde u \in \R^n$ with $(F_h(\tilde u))_i \neq 0$ for all $1 \leq i \leq n$, we have that $F_h(\tilde u + v)  = F_h(\tilde u) + G_{u,h}v$ provided that $|v|_2$ is small enough.
    Consequently, $F_h$ is differentiable at $\tilde u$ and $F_h'(\tilde u) = G_{\tilde u, h}$. 
    For each $k \geq 1$, we now choose $y^k \in \R^n$ componentwise as 
    \begin{equation*}
        y_i^k := 
        \begin{cases}
            y_i &\text{if }y_i\neq 0,\\
            -\frac1k & \text{if }y_i = 0,
        \end{cases}
        \quad\text{for } 1\leq i \leq n,
    \end{equation*}
    set $u^k := M^{-1}\left(Ay^k+D\max(y^k,0)\right)$. Since $y^k_i \neq 0$ and $\1_{ \{y^k_i >0\}} = \1_{\{ y_i >0\}}$ for all $1 \leq i \leq n$, the mapping $F_h$ is Gâteaux differentiable at $u^k$, and $F_h'(u^k) = G_{u, h}$. Since $A$ and the componentwise $\max$ are continuous, we obtain that $u^k \to u$ in $\R^n$ and hence $G_{u,h} \in \partial_{B}F_h(u)$.
\end{proof}

\bigskip 

We now show that the non-smooth nonlinear system \eqref{coor-ori-P} can be solved by a semi-smooth Newton method. Defining the mapping $H: \R^n \to \R^n$ by 
\begin{equation*}
    H(y) = A y + D \max(y,0) - Mu,
\end{equation*}
the discrete system \eqref{coor-ori-P} is equivalent to $H(y)=0$.
For each $y^{k} \in \R^n$, we now set
\begin{equation*}
    M_k := A + DE_k,\qquad E_k := \mathrm{diag}\left( \1_{ \{y^{k}_i \geq 0\}} \right).
\end{equation*}
Since the componentwise $\max$ function is locally Lipschitz and piecewise continuously differentiable in each component, we deduce from \cite[Props.~2.26, 2.10, 3.5, 3.8]{Ulbrich2011} that $M_k$ is a Newton derivative of $H$ at $y^{k}$.
Denoting the \emph{active set} at $y^{k}$ by
\begin{equation}\label{eq:active-set}
    AC^{k} := \left\{ i: 1 \leq i \leq n, y^{k}_i \geq 0 \right\},
\end{equation}
we have for any $y \in \R^n$ that 
\begin{equation*}
    y^TM_ky = y^TAy + y^TDE_ky = y^TAy + \sum_{i \in AC^{k}} d_{ii}|y_i|^2 \geq y^TAy \geq c_0 |y|_2^2
\end{equation*} 
for some constant $c_0>0$ and hence that 
$\| M_k^{-1}\|_2 \leq c_0^{-1}$. Here, $|y|_2$ and $\|M\|_2$ denote the Euclidean norm of $y\in\R^n$ and the induced (spectral) norm of $M\in\R^{n\times n}$, respectively. By \cite[Prop.~2.12]{Ulbrich2011}, the semi-smooth Newton iteration
\begin{equation}
    M_k \delta y = -H(y^k),\qquad y^{k+1}=y^k+\delta y,
\end{equation}
then converges locally superlinearly to a solution to \eqref{coor-ori-P}.
Furthermore, since the equation is piecewise linear, the semi-smooth Newton method has a finite termination property: If the active sets \eqref{eq:active-set} coincide between successive iterations $k$ and $k+1$, then $H(y^{k+1})=0$; cf.~\cite[Rem.~7.1.1]{Kunisch:2008a}. Correspondingly, we terminate the iteration as soon as the active sets remain unchanged.

\subsection{Numerical examples} 

We consider $\Omega=(0,1)^2\subset \R^2$ and use a uniform triangular Friedrichs--Keller triangulation with $n_h\times n_h$ vertices for $n_h=512$ unless noted otherwise.
The semi-smooth Newton systems are solved by a direct sparse solver,
and the semi-smooth Newton iteration for solving the non-smooth nonlinear system \eqref{coor-ori-P} is started from $y^0=0$ and terminated when the active sets corresponding to two consecutive steps coincide.
The Python implementation used to generate the following results (as well as a Julia implementation) can be downloaded from \url{https://www.github.com/clason/bouligandlandweber}.

The exact solution to be reconstructed is defined as
\begin{multline*}
    u^\dag(x_1,x_2) = \max(y^\dag(x_1,x_2),0) \\
    + 
    \left[
        4 \pi^2 y^\dag(x_1,x_2) - 2\left((2x_1-1)^2 + 2(x_1-1+\beta)(x_1-\beta) \right)\sin (2\pi x_2)
    \right]
    \1_{(\beta,1-\beta]}(x_1)
\end{multline*}
where
\begin{equation*}
    y^\dag(x_1,x_2) = 
    \left[(x_1-\beta)^2(x_1-1+ \beta)^2 \sin (2\pi x_2)\right]
    \1_{(\beta,1-\beta]}(x_1)
\end{equation*} 
with constant $\beta = 0.005$ is the corresponding exact state; see \cref{fig:target_exactdata}.
\begin{figure}[t]
    \centering
    \begin{minipage}[t]{0.495\textwidth}
        \centering
        \includegraphics[width=\linewidth]{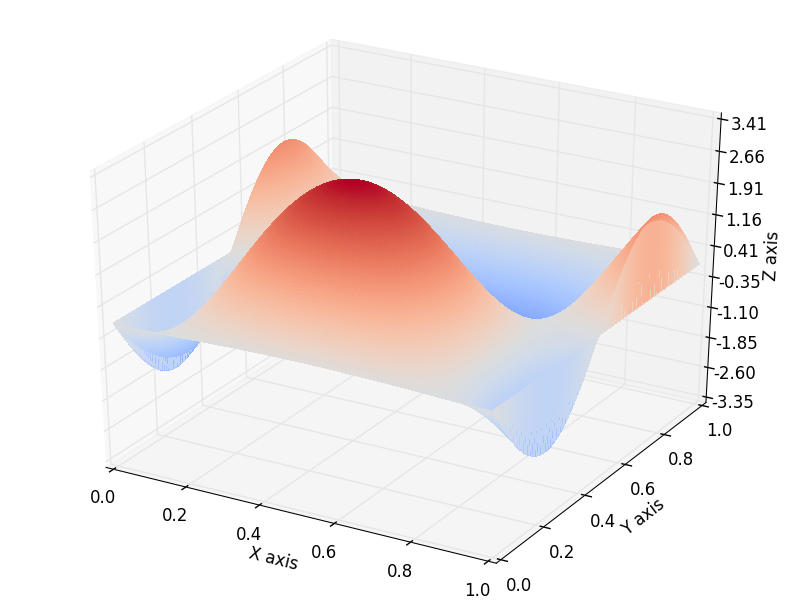}
        \caption{exact data $u^\dag$}
        \label{fig:target_exactdata}
    \end{minipage}
    \hfill
    \begin{minipage}[t]{0.495\textwidth}
        \includegraphics[width=\linewidth]{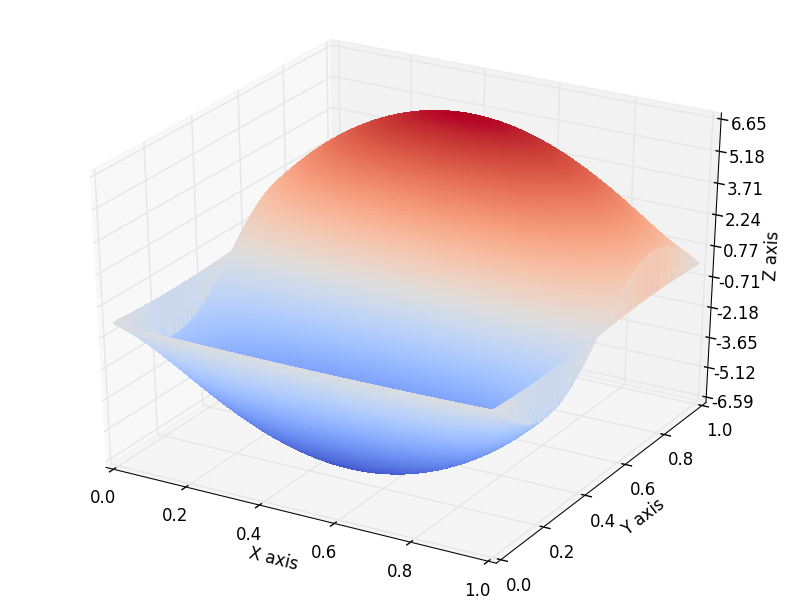}
        \caption{starting point $u_0=\bar u$}
        \label{fig:guess-source}
    \end{minipage}
\end{figure}
It is easy to see that $y^\dag \in H^2(\Omega) \cap H^1_0(\Omega)$ satisfies \eqref{eq:maxpde} for the right-hand side $u^\dag$ and that $y^\dag$ vanishes on a set of measure $2\beta$. Therefore, the forward operator $F$ is not Gâteaux differentiable at $u^\dag$; see \cref{prop:gateaux}.

We then add random Gaussian noise componentwise to the discrete projection $y^\dag_h$ to obtain noisy data $y^\delta_h$ with ($L^2$) noise level
\begin{equation*}
    \delta := \| y^\dag_h - y^\delta_h \|_{L^2(\Omega)}.
\end{equation*}
Here and below, all norms for discrete functions $v_h$ are calculated exactly via $\|v_h\|_{L^2(\Omega)}^2 = v_h^TM v_h$ (identifying again the function $v_h$ with its vector of expansion coefficients). To keep the notation simple, we omit the subscript $h$ from now on.

In the following, we illustrate the convergence for both noise-free and noisy data and two different choices of starting points: the trivial point $u_0\equiv 0$ and the discrete projection of 
\begin{equation} \label{guess-source}
    \bar u:= u^\dag - 2 \rho \sin(\pi x_1) \sin(2\pi x_2),
\end{equation}
see \cref{fig:guess-source}. 
We point out that for the second starting point, $u^\dag$ satisfies the \emph{generalized source condition}
\begin{equation}
    \label{source-condition}
    u^\dag - u_0 \in \mathcal{R}\left(G_{u^\dag}^*\right),
\end{equation} 
where $\mathcal{R}(T)$ denotes the range of operator $T$. Note also that this choice of $u_0$ is far from the exact solution $u^\dag$. 
In all cases, the parameters in the Bouligand--Landweber iteration \eqref{eq:modified-landweber} are set to
\begin{equation*}\label{eq:parameters}
    \mu = 0.1, \quad \tau = 1.4, \quad \rho = 5,\quad w_n= \lambda= \Lambda = \frac{2-2\mu}{\bar L^2},\quad \bar L = 5 \cdot 10^{-2},
\end{equation*}
and the Bouligand--Landweber iteration is terminated at $N_{\max}=5000$.

We first address the convergence for noise-free data $y^\dag$ from \cref{thm:free} by plotting in \cref{fig:free} the relative error
\begin{equation} \label{rel-error-free}
    E_n:=\frac{\| u^\dag -u_n \|_{L^2(\Omega)}}{\| u^\dag\|_{L^2(\Omega)}}
\end{equation}
of the iterates $u_n$ as a function of the iteration index $n$.
As \cref{fig:free-zero} shows, the iteration slows down for the trivial starting point $u_0\equiv 0$ after $100$ steps of rather fast convergence. However, the relative error continues to decrease significantly even after that.
In contrast, \cref{fig:free-source} demonstrates that the rate of convergence for the starting point $u_0=\bar u$ from \eqref{guess-source} is substantially higher. Although here the initial relative error is three times greater than for the trivial starting point, the relative error drops quickly from $3.33460$ to less than $10^{-3}$ after $25$ steps and then continues to reduce.  
\begin{figure}[t]
    \centering
    \begin{subfigure}[b]{0.49\textwidth}
        \centering
        \input{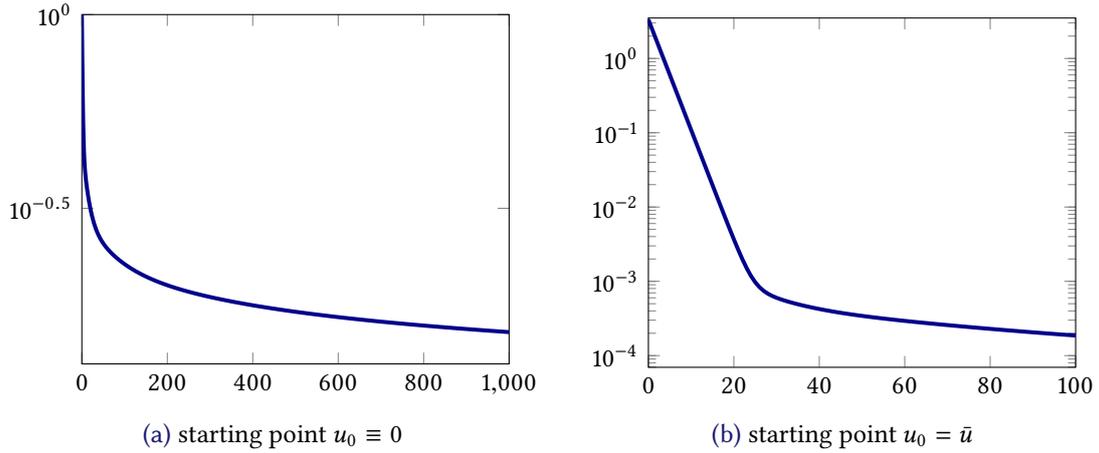}
        \caption{starting point $u_0\equiv 0$}
        \label{fig:free-zero}
    \end{subfigure}
    \hfill
    \begin{subfigure}[b]{0.49\textwidth}
        \centering
        \begin{tikzpicture}[baseline,style={font=\small}]
    \begin{semilogyaxis}[
        width=\textwidth,
        xmin=0,
        xmax=100,
        ymin=0,
        ymax=3.5,
        ]
        \addplot [color=DarkBlue,line width=1.5pt]
        table[row sep=crcr]{%
            0     3.3346036537920054    \\
            1     2.3683363451320973    \\
            2     1.6825383938838403    \\
            3     1.1948447161748323    \\
            4     0.8490682492572671    \\
            5     0.6029242888392988    \\
            6     0.4282698198685035    \\
            7     0.30419634160969267    \\
            8     0.21607301119239927    \\
            9     0.1534778268751937    \\
            10     0.10901737158006786    \\
            11     0.07743805715620637    \\
            12     0.055008872529725694    \\
            13     0.0390795538830058    \\
            14     0.027767781177568843    \\
            15     0.01973662660243989    \\
            16     0.014036695857953738    \\
            17     0.009993809474749024    \\
            18     0.007129432791347158    \\
            19     0.0051039236860827685    \\
            20     0.00367642340629369    \\
            21     0.0026760135328614694    \\
            22     0.0019813018670897975    \\
            23     0.0015053416386217597    \\
            24     0.0011849665039625453    \\
            25     0.0009730859125140265    \\
            26     0.0008343319747211917    \\
            27     0.0007426501088152762    \\
            28     0.0006800649867883443    \\
            29     0.0006349643661507712    \\
            30     0.0006003800745318971    \\
            31     0.0005722568337202765    \\
            32     0.0005483301820446472    \\
            33     0.0005273039542218782    \\
            34     0.0005084496833571379    \\
            35     0.0004913182726989556    \\
            36     0.00047563349321680533    \\
            37     0.00046119588167167915    \\
            38     0.00044786273424878725    \\
            39     0.0004355135887232344    \\
            40     0.00042405097623967746    \\
            41     0.0004133860774804207    \\
            42     0.0004034426270825716    \\
            43     0.00039414988119007734    \\
            44     0.0003854457659438473    \\
            45     0.00037727292614533046    \\
            46     0.00036958069517619487    \\
            47     0.00036232266329380225    \\
            48     0.000355457761318989    \\
            49     0.00034894867770518637    \\
            50     0.0003427623836714007    \\
            51     0.00033686906505517273    \\
            52     0.0003312423253048376    \\
            53     0.00032585844622736    \\
            54     0.00032069641839332626    \\
            55     0.00031573741949260056    \\
            56     0.0003109647606986271    \\
            57     0.00030636351280337337    \\
            58     0.00030192041840590314    \\
            59     0.0002976236205376815    \\
            60     0.0002934625672603977    \\
            61     0.0002894278126310959    \\
            62     0.0002855109267910912    \\
            63     0.0002817043487515243    \\
            64     0.0002780013075041107    \\
            65     0.0002743957123883616    \\
            66     0.000270882086726736    \\
            67     0.00026745548572364717    \\
            68     0.0002641114421223014    \\
            69     0.00026084590444070175    \\
            70     0.0002576551932707897    \\
            71     0.00025453595463689765    \\
            72     0.0002514851252900237    \\
            73     0.0002484998973686028    \\
            74     0.0002455776910739593    \\
            75     0.00024271612781484988    \\
            76     0.00023991300882070357    \\
            77     0.00023716629467530702    \\
            78     0.00023447408863960014    \\
            79     0.00023183462101015833    \\
            80     0.00022924623614186602    \\
            81     0.00022670738045518812    \\
            82     0.00022421659234502954    \\
            83     0.0002217724929558462    \\
            84     0.00021937377832833218    \\
            85     0.00021701921227455174    \\
            86     0.00021470762025472905    \\
            87     0.00021243788385102348    \\
            88     0.00021020893597669193    \\
            89     0.00020801975656651843    \\
            90     0.0002058693688128968    \\
            91     0.00020375683578404913    \\
            92     0.000201681257449506    \\
            93     0.00019964176800703053    \\
            94     0.0001976375335161499    \\
            95     0.00019566774976762102    \\
            96     0.00019373164038399455    \\
            97     0.00019182845510620983    \\
            98     0.00018995746825536172    \\
            99     0.00018811797733971853    \\
            100     0.00018630930179546558    \\    	
        };
    \end{semilogyaxis}
\end{tikzpicture}%
        \caption{starting point $u_0=\bar u$}
        \label{fig:free-source}
    \end{subfigure}
    \caption{relative error $E_n$ from \eqref{rel-error-free} of iterates in the noise-free setting}
    \label{fig:free}
\end{figure}

We next turn to the regularization property from \cref{thm:REG}. \cref{tab:noise} shows for a decreasing sequence of noise levels and both starting points (for the same realization of the random data) the stopping index $N_\delta=N(\delta,y^\delta)$, the total number of semi-smooth Newton steps, and the relative error 
\begin{equation} \label{rel-error-noise}
    E_N^\delta:= \frac{\| u^\dag -u^\delta_{N_\delta} \|_{L^2(\Omega)}}{\| u^\dag\|_{L^2(\Omega)}}.
\end{equation}
\begin{table}
    \centering
    \begin{tabular}{%
            S[table-format=1.2e-2,scientific-notation=true,round-mode=places,round-precision=2]
            S[table-format=4]
            S[table-format=1.2e-1,scientific-notation=true,round-mode=places,round-precision=2]
            S[table-format=5]
            S[table-format=2]
            S[table-format=1.2e-1,scientific-notation=true,round-mode=places,round-precision=2]
            S[table-format=2.2,round-mode=places,round-precision=2]
            S[table-format=3]
        }
        \toprule
        & \multicolumn{3}{c}{$u_0\equiv 0$} & \multicolumn{4}{c}{$u_0=\bar u$} \\
        \cmidrule(lr){2-4} \cmidrule(lr){5-8}
        {$\delta$} & {$N_\delta$} & {$E_N^\delta$}     & {\#\,SSN} & {$N_\delta$} & {$E_N^\delta$}        & {$R_N^\delta$}   & {\#\,SSN}   \\
        \midrule
        %         1.058e-01  & 0            & 1.0                & 0         & 0            & 3.334603653792        & 15.370460018326153  & 0        \\
        %         5.290e-02  & 0            & 1.0                & 0         & 2            & 1.6874254462409177    & 11.00100245004275   & 6        \\
        1.059e-02  & 3            & 0.5350603488049664 & 7         & 7            & 0.3038229834595208    & 4.426907180089214  & 23        \\
        5.289e-03  & 6            & 0.4180029139981761 & 16        & 9            & 0.15376641213903283   & 3.1703566958991205 & 31        \\
        1.061e-03  & 42           & 0.2656438267572647 & 125       & 14           & 0.027605772447678992  & 1.2708364774063334 & 50        \\
        5.299e-04  & 106          & 0.2246960078391614 & 342       & 16           & 0.014122792114159833  & 0.9199372612366256 & 55        \\
        1.059e-04  & 1120         & 0.1485472865894022 & 4409      & 21           & 0.0026765254692178563 & 0.38999229008982084& 75        \\
        5.288e-05  & 3224         & 0.1227892523783321 & 12562     & 23           & 0.0014909642898076302 & 0.3074317947414472 & 84        \\
        1.060e-05  & {---}        & {---}              & {---}     & 28           & 0.0006803581750327172 & 0.31339489771213125& 102       \\
        5.292e-06  & {---}        & {---}              & {---}     & 35           & 0.0004926208918608504 & 0.3210834366382792 & 133       \\
        \bottomrule
    \end{tabular}
    \caption{regularization property: noise level $\delta$; stopping index $N_\delta=N(\delta,y^\delta)$; relative error $E_N^\delta$ from \eqref{rel-error-noise}; empirical convergence rate $R_N^\delta$ from \eqref{conv-rate}; total number of semi-smooth Newton steps (--- : not converged)}
    \label{tab:noise}
\end{table}
First we note that since the trivial starting point $u_0\equiv 0$ is actually closer to $u^\dag$ than to $\bar u$, the discrepancy principle is satisfied earlier for the former when $\delta > 5\cdot 10^{-3}$ although the relative error is smaller for the latter. Considering the convergence behavior for $u_0\equiv 0$, the relative error decreases slowly from $5.35\cdot10^{-1}$ to $1.23\cdot10^{-1}$; however, the stopping index $N_\delta$ increases rapidly from $3$ to $3224$ before failing to converge within the prescribed maximum number of $5000$ iterations. This is reasonable as the classical Landweber iteration is known to be similarly slow but reliable. In contrast, for the starting point $u_0=\bar u$, the relative error decreases rapidly from $3.04\cdot 10^{-1}$ to $4.93\cdot10^{-4}$ while the stopping index $N_\delta$ increases only slightly from $7$ to $35$. 
As expected, we thus see much faster convergence for the Bouligand--Landweber iteration if the exact solution satisfies a generalized source condition.
For $u_0=\bar u$, \cref{tab:noise} also shows the empirical convergence rate
\begin{equation} \label{conv-rate}
    R_N^\delta:= \frac{\| u^\dag - u^\delta_{N_\delta}\|_{L^2(\Omega)}}{\sqrt{\delta}},
\end{equation}
which stabilizes around $0.3$ for $\delta \leq 5 \cdot 10^{-5}$. This corresponds to the convergence rate $\mathcal{O}(\sqrt{\delta})$ expected from the classical source condition $u^\dag-u_0\in \mathcal{R}(F'(u^\dag)^*)$.
For both starting points, the average number of semi-smooth Newton iterations per Bouligand--Landweber iteration is consistently between $2$ and $4$, increasing slightly as $\delta$ decreases (which can further be reduced by warm-starting the Newton iteration, i.e., starting with $y^0=y_{n-1}$ instead of $y^0=0$).

Finally, we illustrate the effects of discretization by showing for $n_h=256$ in \cref{fig:noise-zero,fig:noise-source} the noisy data $y^\delta$ for the noise levels $\delta \in \{1.049 \cdot 10^{-2}, 1.060 \cdot 10^{-3}, 1.055 \cdot 10^{-4}\}$ together with the corresponding reconstructions $u^\delta_{N_\delta}$ and the starting points $u_0\equiv 0$ and $u_0=\bar u$, respectively. As can be seen, the stopping indices ($N_\delta = 4,42,1119$ for $u_0\equiv 0$ and $N_\delta = 7,14,21$ for $u_0\equiv \bar u$) are very similar to the results for $n_h=512$. The same holds for the relative errors (not shown in the figures), which are 
$E_N^\delta\approx4.74 \cdot 10^{-1},2.62 \cdot 10^{-1},1.42 \cdot 10^{-1}$ for $u_0\equiv 0$ and  $E_N^\delta \approx 3.00 \cdot 10^{-1}, 2.76 \cdot 10^{-2},2.68 \cdot 10^{-3}$ for $u_0 = \bar u$.
\begin{figure}[tp]
    \centering
    \begin{subfigure}[t]{0.495\textwidth}
        \centering
        \includegraphics[width=\linewidth]{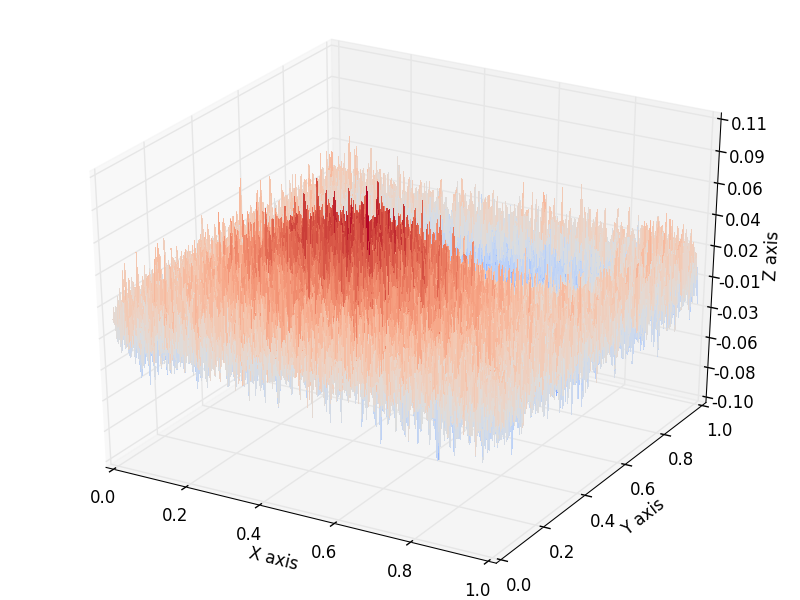}
        \caption{$y^\delta$, $\delta = 1.049 \cdot 10^{-2}$}
    \end{subfigure}
    \hfill
    \begin{subfigure}[t]{0.495\textwidth}
        \centering 
        \includegraphics[width=\linewidth]{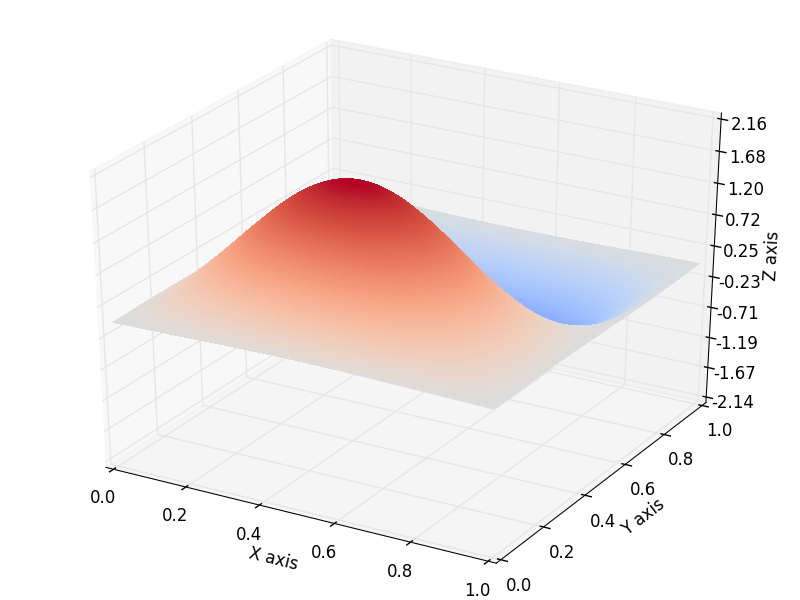}
        \caption{$u^\delta_{N_\delta}$, $N_\delta = 4$} 
    \end{subfigure}
    \\
    \begin{subfigure}[t]{0.495\textwidth}
        \centering
        \includegraphics[width=\linewidth]{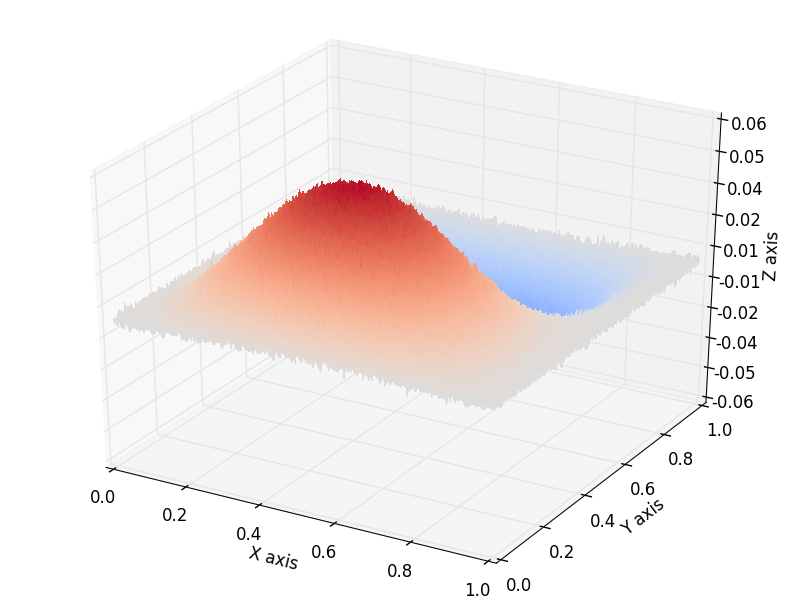}
        \caption{$y^\delta$, $\delta = 1.060 \cdot 10^{-3}$}
    \end{subfigure}
    \hfill
    \begin{subfigure}[t]{0.495\textwidth}
        \centering 
        \includegraphics[width=\linewidth]{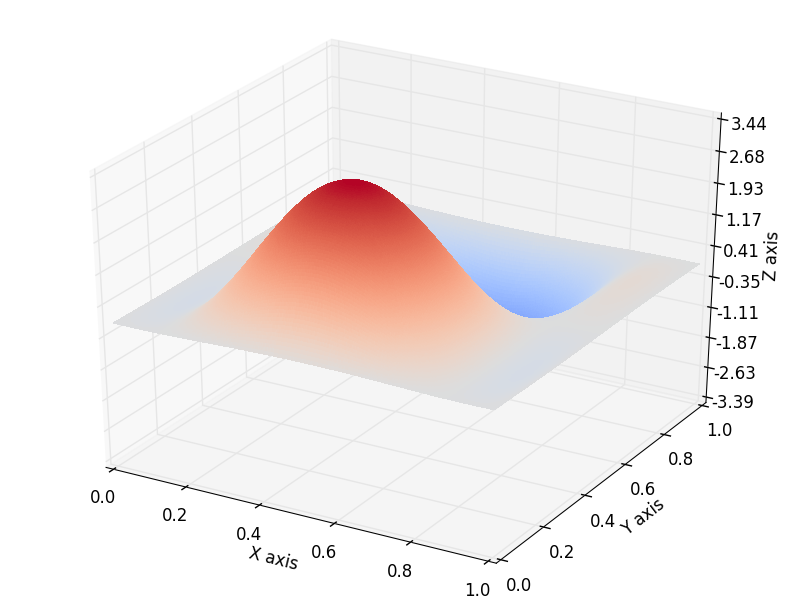}
        \caption{$u^\delta_{N_\delta}$, $N_\delta= 42$}
    \end{subfigure}
    \\
    \begin{subfigure}[t]{0.495\textwidth}
        \centering
        \includegraphics[width=\linewidth]{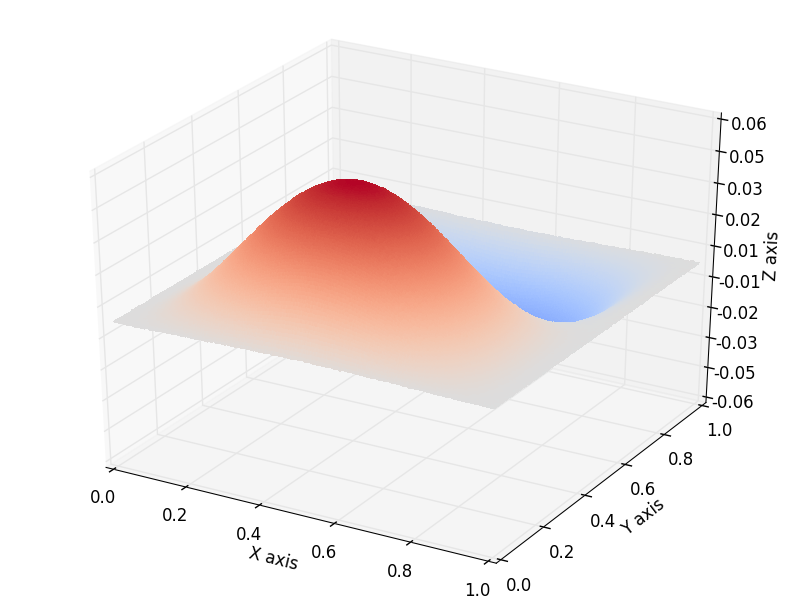}
        \caption{ $y^\delta$, $\delta = 1.055 \cdot 10^{-4}$}
    \end{subfigure}
    \hfill
    \begin{subfigure}[t]{0.495\textwidth}
        \centering 
        \includegraphics[width=\linewidth]{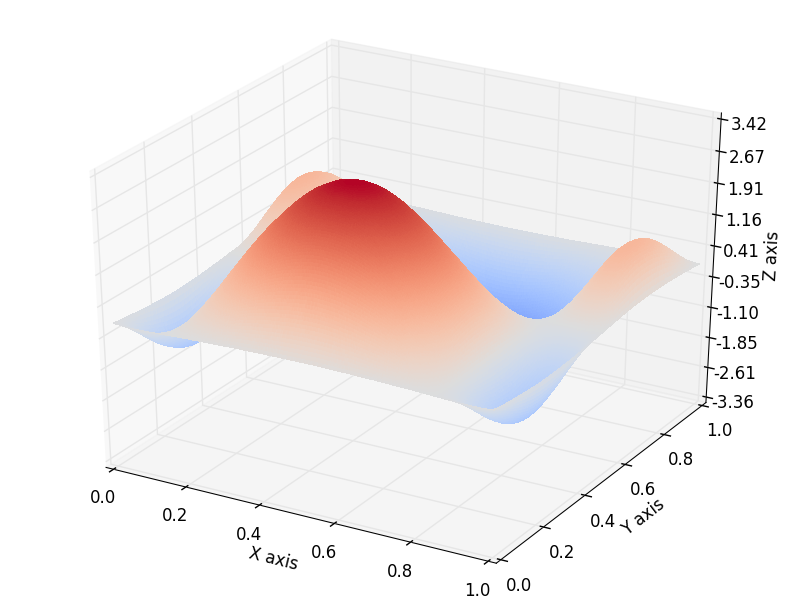}
        \caption{$u^\delta_{N_\delta}$, $N_\delta = 1119 $}
    \end{subfigure}
    \caption{noisy data $y^\delta$ and reconstructions $u^\delta_{N_\delta}$ for starting point $u_0\equiv 0$}
    \label{fig:noise-zero}
\end{figure}
\begin{figure}[tp]
    \centering
    \begin{subfigure}[t]{0.495\textwidth}
        \centering
        \includegraphics[width=\linewidth]{noise_01.png}
        \caption{$y^\delta$, $\delta = 1.049 \cdot 10^{-2}$}
    \end{subfigure}
    \hfill
    \begin{subfigure}[t]{0.495\textwidth}
        \centering 
        \includegraphics[width=\linewidth]{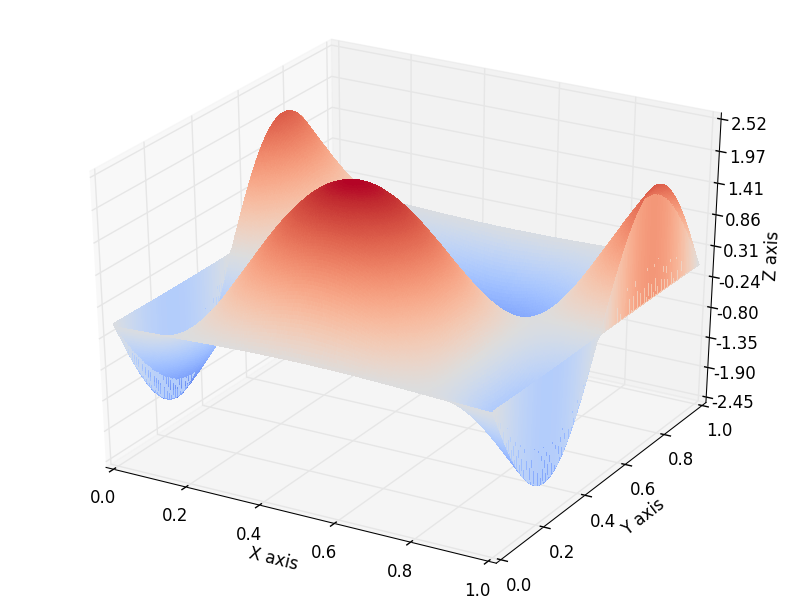}
        \caption{$u^\delta_{N_\delta}$, $N_\delta = 7$} 
    \end{subfigure}
    \\
    \begin{subfigure}[t]{0.495\textwidth}
        \centering
        \includegraphics[width=\linewidth]{noise_001.png}
        \caption{$y^\delta$, $\delta =1.060 \cdot 10^{-3}$}
    \end{subfigure}
    \hfill
    \begin{subfigure}[t]{0.495\textwidth}
        \centering 
        \includegraphics[width=\linewidth]{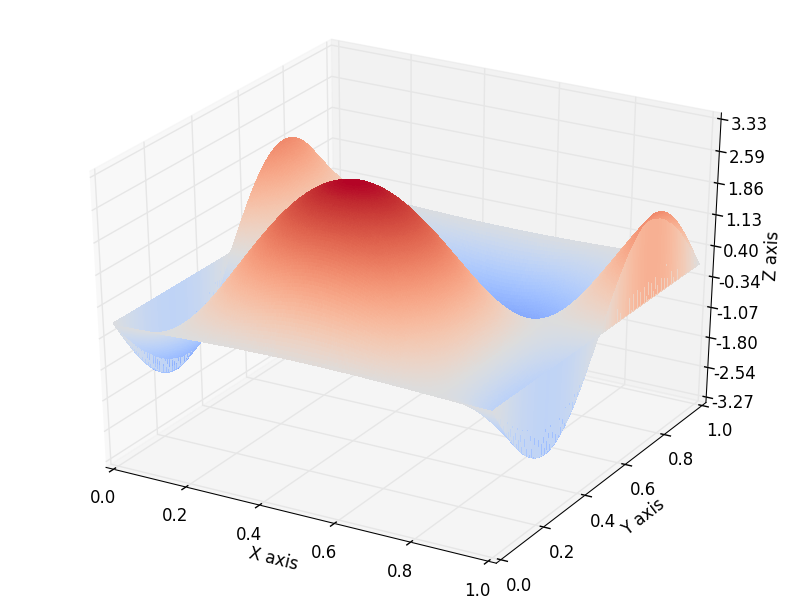}
        \caption{$u^\delta_{N_\delta}$, $N_\delta = 14$} 
    \end{subfigure}
    \\
    \begin{subfigure}[t]{0.495\textwidth}
        \centering
        \includegraphics[width=\linewidth]{noise_0001.png}
        \caption{$y^\delta$, $\delta = 1.055 \cdot 10^{-4}$}
    \end{subfigure}
    \hfill
    \begin{subfigure}[t]{0.495\textwidth}
        \centering 
        \includegraphics[width=\linewidth]{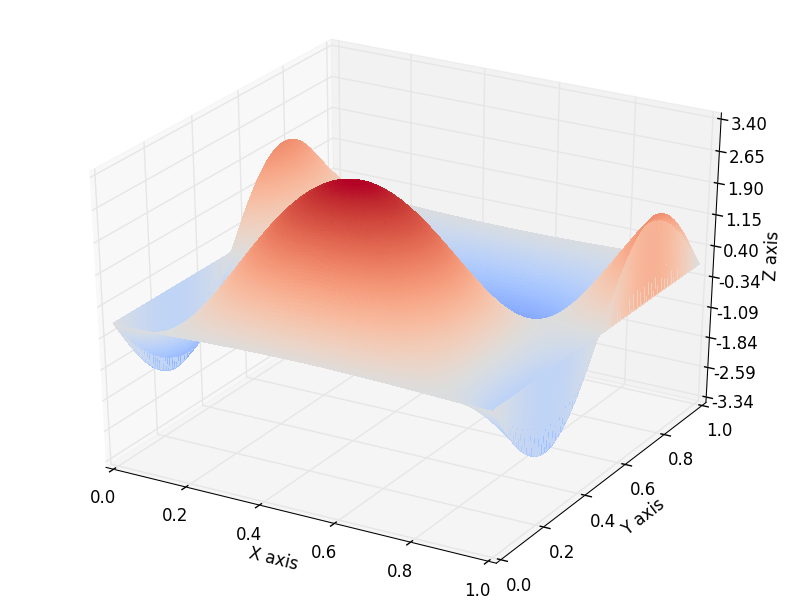}
        \caption{$u^\delta_{N_\delta}$, $N_\delta = 21$} 
    \end{subfigure}
    \caption{noisy data $y^\delta$ and reconstructions $u^\delta_{N_\delta}$ for starting point $u_0 = \bar u$}
    \label{fig:noise-source}
\end{figure}

%%% Discussion and Conclusion

\section{Conclusion} \label{sec:Con}

We have considered the iterative regularization of an inverse source problem for a non-smooth elliptic PDE. By using a Bouligand subderivative in place of the non-existent Fréchet derivative of the forward mapping, a modified Landweber method (which we call Bouligand--Landweber iteration in this case) can be applied. To account for the missing continuity of the Bouligand subderivative mapping, a new convergence analysis of the modified Landweber method is provided that is based on the concept of asymptotic stability and merely requires a generalized tangential cone condition together with some boundedness assumptions. This condition is verified for our non-smooth model problem provided that the non-differentiability of the forward mapping is sufficiently ``weak'' at the exact solution, and thus the Bouligand--Landweber iteration provides a convergent regularization method. Numerical examples verify the convergence of the iteration for exact as well as for noisy data. While the convergence is slow for an arbitrary starting point (sufficiently close to the solution), it is significantly faster for a starting point for which the exact solution satisfies a generalized source condition.

This work can be extended in a number of directions. First, it would be interesting to derive convergence rates under the generalized source condition \eqref{source-condition}. Another practically relevant issue would be to extend the analysis of the method to cover other classes of non-smooth PDEs such as time-dependent equations or equations with non-smooth nonlinearities entering higher-order terms as for the two-phase Stefan problem; further work will also consider solution operators for variational inequalities for which Bouligand differentiability has recently been shown \cite{Meyer:2018,Rauls:2018a,Rauls:2018b}. For practical applications, the convergence analysis of the modified Landweber method should also take into account the discretization of the non-smooth PDE, where adaptivity can be used to further reduce the computational effort as in \cite{Ramlau:2008} for linear inverse problems in Besov spaces.   
Finally, similar non-smooth extensions of iterative regularization methods of Newton-type should lead to significantly faster convergence.

%%%% Acknowledgment

\section*{Acknowledgment} 
The authors would like to thank the anonymous reviewers for their detailed and constructive comments that helped to significantly improve the presentation.
This work was supported by the DFG under the grants CL 487/2-1 and RO 
2462/6-1, both within the priority programme SPP 1962 ``Non-smooth and 
Complementarity-based Distributed Parameter Systems:
Simulation and Hierarchical Optimization''.

%%%% Appendix
\appendix

%%% 
\section{Elliptic equations with piecewise differentiable nonlinearities}\label{app:piecewise-diff}

In this appendix, we show that  \crefrange{ass:cont}{ass:adj_bounded} are satisfied for a general class of non-smooth semilinear elliptic equations with $PC^1$-nonlinearities.

\bigskip

We first recall the following definition from, e.g., \cite[Chap.~4]{Scholtes} and \cite[Def.~2.19]{Ulbrich2011}.
Let $V\subset \R$ be an open set. A function $f: V \to \R$ is called a \emph{piecewise differentiable function} or \emph{$PC^1$-function} if $f$ is continuous, and for each point $x_0 \in V$ there exist a neighborhood $W \subset V$ and a finite set of $C^1$-functions $f_i: W \to \R$, $i=1,2,\dots,N$, such that
\begin{equation*}
    f(x) \in \{f_1(x), f_2(x),\dots,f_N(x)\} \quad \text {for all} \quad x \in W.
\end{equation*}
The set $\{f_1,f_2,\dots,f_N\}$ is said to be the \emph{selection functions} of $f$ on $W$. We denote by $S_f\subset V$ the set of all points in $V$ at which $f$ is not differentiable, i.e.,
\begin{equation}
    S_f := \{x \in V: f' \ \text {does not exist at} \ x \}.
\end{equation}   
We assume in the following that the set $S_f$ consists of a finite number of points $t_1,t_2,\dots,t_k$. By virtue of the decomposition theorem for piecewise smooth functions \cite[Prop.~2D.7]{Dontchev-Rockafellar2014}, $f$ can be represented as 
\begin{equation*} 
    f(t) = \sum_{i=1}^{k+1} \1_{(t_{i-1},t_i]}(t)f_i(t)  \quad \text{for all } t \in \R,
\end{equation*}
where $f_i$, $1 \leq i \leq k+1$, are $C^1$-functions on $\R$ and
\begin{equation*}
    -\infty =: t_0 < t_1 < \cdots < t_k < t_{k+1} := \infty
\end{equation*} 
with the convention $(t_k, t_{k+1}] := (t_k, \infty)$.
Moreover, we assume that each $f_i$ is non-decreasing on $(t_{i-1},t_{i})$, $1 \leq i \leq k+1$, and that
\begin{equation} \label{affine1}
    f_{i}(t_i) = f_{i+1}(t_i) \quad \text {for all} \quad 1 \leq i \leq k.
\end{equation}
We require the following technical lemma regarding the nonlinearity.
\begin{lemma}\label{lem:remainder}
    For each $M>0$, let $r_{iM}: [-M, M] \times \R \to [0, \infty)$, $i = 1,2,\dots,k+1$, be defined as 
    \begin{equation}\label{eq:remainder}
        r_{iM}(t,s) := 
        \begin{cases} 
            \left | \frac{f_i(t+s)- f_i(t)}{s} - f'_i(t) \right| & s\neq 0,\\
            0 & s=0.
        \end{cases}
    \end{equation} 
    Then, $r_{iM}$ is continuous and satisfies
    \begin{equation} \label{eq:infinitesimal}
        r_{iM}(t,s) \to 0 \quad \text {as } s \to 0 \quad \text {uniformly in} \ t \in [-M,M].     
    \end{equation}
\end{lemma}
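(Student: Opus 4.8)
The plan is to replace the difference quotient by its integral (Hadamard) form and thereby reduce the whole statement to the uniform continuity of the continuous function $f_i'$ on a compact interval. Since each selection function $f_i$ is $C^1$ on $\R$, the fundamental theorem of calculus gives, for every $t\in[-M,M]$ and every $s\neq 0$,
\begin{equation*}
    \frac{f_i(t+s)-f_i(t)}{s} = \int_0^1 f_i'(t+\theta s)\,d\theta .
\end{equation*}
The right-hand side is also well defined at $s=0$, where it equals $f_i'(t)$. Hence, setting $g_i(t,s):=\int_0^1 f_i'(t+\theta s)\,d\theta$, we obtain the single formula
\begin{equation*}
    r_{iM}(t,s) = |g_i(t,s)-f_i'(t)| = \left|\int_0^1 \left(f_i'(t+\theta s)-f_i'(t)\right)d\theta\right|
\end{equation*}
valid for all $(t,s)\in[-M,M]\times\R$, including $s=0$, where both sides vanish in accordance with the definition \eqref{eq:remainder}.

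First I would establish continuity. Away from the set $\{s=0\}$ this is immediate, since there $r_{iM}$ is a quotient of continuous functions with nonvanishing denominator composed with $|\cdot|$. At a point $(t_0,0)$ I would instead use the representation through $g_i$: because $f_i'$ is continuous, the integrand $(t,s)\mapsto f_i'(t+\theta s)$ is continuous, and integrating over the fixed compact interval $\theta\in[0,1]$ yields a continuous function $g_i$ on $[-M,M]\times\R$ (standard continuity of parameter integrals, via uniform continuity of the integrand on compacta). Continuity of $r_{iM}=|g_i-f_i'|$ then follows at once.

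For the uniform limit \eqref{eq:infinitesimal}, I would estimate
\begin{equation*}
    r_{iM}(t,s) \le \int_0^1 \left|f_i'(t+\theta s)-f_i'(t)\right|d\theta .
\end{equation*}
Restricting to $|s|\le 1$, all arguments $t+\theta s$ with $t\in[-M,M]$ and $\theta\in[0,1]$ lie in the compact interval $[-M-1,M+1]$, on which $f_i'$ is uniformly continuous. Thus, given $\epsilon>0$, there is $\delta\in(0,1]$ with $|f_i'(a)-f_i'(b)|<\epsilon$ whenever $|a-b|<\delta$; since $|(t+\theta s)-t|\le|s|$, any $|s|<\delta$ forces the integrand below $\epsilon$ for all $t\in[-M,M]$ and all $\theta\in[0,1]$, whence $r_{iM}(t,s)\le\epsilon$ uniformly in $t$. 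This is exactly \eqref{eq:infinitesimal}.

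The only delicate point is continuity at $s=0$, which the integral representation resolves cleanly; an $\epsilon$--$\delta$ argument working directly with the difference quotient would otherwise have to combine the definition of $f_i'(t)$ with the continuity of $f_i'$ and is noticeably more cumbersome. Everything else reduces to the uniform continuity of $f_i'$ on a compact interval, so I anticipate no substantial obstacle.
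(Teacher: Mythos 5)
Your proof is correct and follows essentially the same route as the paper's: both rewrite the difference quotient via the integral representation $\frac{f_i(t+s)-f_i(t)}{s}=\int_0^1 f_i'(t+\theta s)\,d\theta$ and reduce both the continuity at $s=0$ and the uniform limit \eqref{eq:infinitesimal} to the uniform continuity of $f_i'$ on a compact interval. The only minor difference is that you conclude the uniform limit by a direct $\epsilon$--$\delta$ argument, whereas the paper invokes uniform continuity together with Lebesgue's dominated convergence theorem; your version is, if anything, the more transparent way to obtain the uniformity in $t$.
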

\begin{proof}
    Clearly, $r_{iM}$ is continuous at every point $(t,s)$ with $s \neq 0$. Moreover, we have for any $t \in [-M, M]$ and $s \neq 0$ that
    \begin{equation*}
        r_{iM}(t,s) =  \left | \int_{0}^1 \left( f'_i(t+s \tau) - f'_i(t) \right) d\tau \right | \leq \int_{0}^1 \left| f'_i(t+s \tau) - f'_i(t) \right| d\tau.
    \end{equation*}
    From this and the uniform continuity of $f'_i$ on bounded sets, Lebesgue's dominated convergence theorem yields \eqref{eq:infinitesimal}. Consequently, $r_{iM}$ is continuous at $(t,0)$. 
\end{proof}

\bigskip

We now consider the non-smooth semilinear elliptic equation 
\begin{equation} \label{eq:semi-elliptic}
    \left\{
        \begin{aligned}
            A y + f(y) &= u \quad \text{in } \Omega,\\
            \quad y    &=0  \quad \text{on } \partial \Omega,
        \end{aligned}
    \right.
\end{equation} 
with $u \in L^2(\Omega)$, $\Omega\subset \R^d$ for $d\leq 3$, $A$ an elliptic second-order partial differential operator with bounded and measurable coefficients satisfying
\begin{equation*}
    c_0\norm{y}_{H^1_0(\Omega)}^2\leq \langle  Ay,y \rangle \leq c_1\norm{y}_{H^1_0(\Omega)}^2\quad \text{for all } y\in H^1_0(\Omega)
\end{equation*}
for some $c_1\geq c_0>0$,
and a given $PC^1$-function $f$ satisfying the above assumptions. Here $\langle \cdot, \cdot \rangle$ stands for the pairing between $H^1_0(\Omega)$ and $H^{-1}(\Omega)$.
From \cite[Thm.~4.7]{Troltzsch}, we know that for each $u \in L^2(\Omega)$, the equation \eqref{eq:semi-elliptic} admits a unique weak solution $y_u \in  H^1_0(\Omega) \cap C(\overline{\Omega})$. 
Furthermore, a constant $C_\infty$ exists such that
\begin{equation} \label{eq:a-priori}
    \norm{y_u}_{H^1_0(\Omega)} + \norm{y_u}_{C(\overline{\Omega})} \leq C_\infty\norm{u - f(0)}_{L^2(\Omega)}  \quad \text{for all } u \in L^2(\Omega).
\end{equation}

From now on, we denote by $F: L^2(\Omega) \to H^1_0(\Omega) \cap C(\overline{\Omega}) \hookrightarrow L^2(\Omega)$ the solution operator of \eqref{eq:semi-elliptic}.  Since the $f_i$ are all $C^1$-functions, they are thus Lipschitz continuous on bounded sets. From this and \cite[Prop.~4.1.2]{Scholtes}, $f$ is also Lipschitz continuous on bounded sets. By a standard argument, we arrive at the following result, which generalizes \cref{prop:well-posed}.
\begin{proposition} \label{prop:appe_Lipschitz}
    The solution operator  $F: L^2(\Omega) \to H^1_0(\Omega) \cap C(\overline{\Omega})$ is Lipschitz continuous on bounded sets in $L^2(\Omega)$, i.e., for any bounded set $W \subset L^2(\Omega)$ there exists a constant $L_W>0$ such that
    \begin{equation} \label{eq:F-Lip2}
        \norm{F(u)-F(v)}_{H^1_0(\Omega)} + \norm{F(u)-F(v)}_{C(\overline\Omega)} \leq L_W \norm{u-v}_{L^2(\Omega)} \quad \text{for all } u, v \in W.
    \end{equation}      
\end{proposition}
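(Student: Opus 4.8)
The plan is to follow the proof of \cref{prop:well-posed} almost verbatim, replacing the global Lipschitz continuity and monotonicity of the max-operator by the corresponding local properties of the $PC^1$-nonlinearity $f$. First I would fix a bounded set $W\subset L^2(\Omega)$ and invoke the a priori estimate \eqref{eq:a-priori} to produce a constant $M>0$ with $\norm{F(u)}_{C(\overline\Omega)}\leq M$ for all $u\in W$. This confines all relevant state values to the compact interval $[-M,M]$, on which (as noted just before the statement, via \cite[Prop.~4.1.2]{Scholtes}) the function $f$ is Lipschitz continuous with some constant $L_f=L_f(M)$. Establishing this uniform bound first is the conceptual heart of the argument, since it is what lets us treat $f$ as if it were globally Lipschitz on the range of states attained over $W$.

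Next, for $u,v\in W$ I would set $y_u:=F(u)$ and $y_v:=F(v)$ and subtract the weak forms of \eqref{eq:semi-elliptic}, obtaining $A(y_u-y_v)+f(y_u)-f(y_v)=u-v$. Testing this identity with $y_u-y_v$ and using the coercivity $c_0\norm{y_u-y_v}_{H^1_0(\Omega)}^2\leq \langle A(y_u-y_v),y_u-y_v\rangle$ together with the monotonicity of $f$ (which follows from the non-decreasing selection functions $f_i$ and the matching conditions \eqref{affine1}, rendering $f$ globally non-decreasing so that $(f(y_u)-f(y_v),y_u-y_v)_{L^2(\Omega)}\geq 0$), the zero-order term drops out nonnegatively and yields $c_0\norm{y_u-y_v}_{H^1_0(\Omega)}^2\leq (u-v,y_u-y_v)_{L^2(\Omega)}$. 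The Cauchy--Schwarz and Poincar\'e inequalities then give an $L^2$-estimate $\norm{y_u-y_v}_{L^2(\Omega)}\leq C_1\norm{u-v}_{L^2(\Omega)}$, exactly as in \eqref{eq:esti}, with $C_1$ depending only on $c_0$ and $\Omega$.

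Finally, I would read the difference equation as the linear problem $A(y_u-y_v)=u-v-(f(y_u)-f(y_v))$ with right-hand side in $L^2(\Omega)$ and apply \cite[Thm.~4.7]{Troltzsch} to bound $\norm{y_u-y_v}_{H^1_0(\Omega)}+\norm{y_u-y_v}_{C(\overline\Omega)}$ by $C_2\norm{u-v-(f(y_u)-f(y_v))}_{L^2(\Omega)}$; since $d\leq 3$, an $L^2$-source already suffices for the $C(\overline\Omega)$-bound. Estimating the nonlinear term by the local Lipschitz property, $\norm{f(y_u)-f(y_v)}_{L^2(\Omega)}\leq L_f\norm{y_u-y_v}_{L^2(\Omega)}$, and combining with the $L^2$-estimate from the previous step yields \eqref{eq:F-Lip2} with $L_W:=C_2(1+L_fC_1)$.

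The only genuine departure from the proof of \cref{prop:well-posed} — and hence the main point requiring care — is that $f$ is merely Lipschitz on bounded sets rather than globally, so the constant $L_f$ must be controlled uniformly over $W$. This is precisely why the a priori bound $M$ from \eqref{eq:a-priori} is extracted at the outset and why the conclusion is stated only on bounded sets; everything else is a routine transcription, with the coercivity of $A$ playing the role of the Poincar\'e inequality and the monotonicity of $f$ that of the monotonicity of the max-operator.
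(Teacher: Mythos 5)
Your proposal is correct and follows essentially the same route as the paper's own proof: both extract a uniform $C(\overline\Omega)$ bound from \eqref{eq:a-priori} so that the local Lipschitz continuity of $f$ applies, derive the $L^2$-estimate by testing the difference equation with $y_u-y_v$ and using monotonicity of $f$ together with coercivity of $A$ and Poincar\'e, and then apply \cite[Thm.~4.7]{Troltzsch} to the difference equation viewed as a linear problem with $L^2$ right-hand side. The only difference is the order in which the two estimates are assembled, which is immaterial.
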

\begin{proof}
    Take any $u , v \in W$ and set $y:= F(u)$ and $z:= F(v)$. We then have
    \begin{equation} \label{eq:subtract}
        \left\{
            \begin{aligned}
                A (y-z)  &= u -v - \left(f(y) - f(z) \right)\quad \text{in } \Omega,\\
                \quad y-z    &=0  \quad \text{on } \partial \Omega,
            \end{aligned}
        \right.
    \end{equation} 
    which together with \cite[Thm.~4.7]{Troltzsch} leads to 
    \begin{equation} \label{eq:esti-subtract}
        \norm{y-z}_{H^1_0(\Omega)} + \norm{y-z}_{C(\overline\Omega)}
        \begin{aligned}[t]
            &\leq C \norm{u - v - \left(f(y) - f(z)\right) }_{L^2(\Omega)} \\
            & \leq C \left( \norm{u - v  }_{L^2(\Omega)} +\norm{f(y) - f(z) }_{L^2(\Omega)} \right)
        \end{aligned}           
    \end{equation}      
    for some constant $C>0$. Moreover, \eqref{eq:a-priori} implies that $y$ and $z$ belong to a bounded set in $C(\overline{\Omega})$. From this and the Lipschitz continuity on bounded sets of $f$, we obtain 
    \begin{equation} \label{eq:esti-nonlinearity}
        \norm{f(y) - f(z) }_{L^2(\Omega)} \leq C_W  \norm{y - z  }_{L^2(\Omega)}.
    \end{equation}
    In addition, testing the first equation in \eqref{eq:subtract} with $(y-z)$ and using the non-decreasing monotonicity of $f$ yields that
    \begin{equation*}
        \langle A(y-z) , y-z  \rangle \leq \left(u-v , y-z  \right)_{L^2(\Omega)} \leq \norm{u-v}_{L^2(\Omega)}\norm{y-z}_{L^2(\Omega)}.
    \end{equation*}     
    The uniform ellipticity of $A$ and the Poincaré inequality thus imply that 
    \begin{equation} \label{eq:esti-L2-Lip}
        \norm{y-z}_{L^2(\Omega)} \leq C_P \norm{u-v}_{L^2(\Omega)}
    \end{equation}
    for some constant $C_P>0$. By inserting \eqref{eq:esti-L2-Lip} into \eqref{eq:esti-nonlinearity} and then using \eqref{eq:esti-subtract}, we obtain the desired estimate.
\end{proof}
By virtue of \cref{prop:appe_Lipschitz} and the compact embedding $H^1_0(\Omega) \hookrightarrow L^2(\Omega)$, the uniqueness of solutions to \eqref{eq:semi-elliptic} guarantees that $F:L^2(\Omega) \to L^2(\Omega)$ is completely continuous and hence satisfies \cref{ass:cont}.

\bigskip

For each $u \in L^2(\Omega)$, we further denote by $G_u: L^2(\Omega) \to H^1_0(\Omega) \cap C(\overline \Omega) \hookrightarrow L^2(\Omega)$ the solution operator of the linear equation 
\begin{equation} \label{eq:linearization-appe}
    \left\{
        \begin{aligned}
            A \eta + a_u \eta &= w \quad \text {in } \Omega,\\
            \eta &= 0 \quad \text {on }  \partial \Omega,
        \end{aligned}
    \right.
\end{equation}
for given $w \in L^2(\Omega)$ and 
\begin{equation*} 
    a_u(x) :=  \sum_{i=1}^{k+1} \1_{(t_{i-1},t_i]}(y_u(x))f_i'(y_u(x))\quad\text{for all }x\in \Omega
\end{equation*} 
with $y_u := F(u)$. It is easy to see that 
\begin{equation*}
    a_u(x) \in \partial_B f(y_u(x))\quad\text{for all } x\in\Omega,
\end{equation*} 
where $\partial_B f(t)$ stands for the Bouligand subdifferential of $f$ at $t$.
\begin{remark}
    When $f(t) : = t^+$, we have $k=1$, $f_1(t) = 0$, $f_2(t) = t$, and $S_f = \{t_1\}$ for $t_1=0$. In this case, $a_u=\1_{\{y_u >0\}}$, and so (for $A=-\Delta$), $G_u$ coincides with the operator defined in \cref{prop:Gu}.
\end{remark}
Let $W$ be an arbitrary bounded subset in $L^2(\Omega)$. 
From the a priori estimate \eqref{eq:a-priori}, we see that the set $\{y_u = F(u): u \in W\}$ is bounded in $C(\overline\Omega)$.
Therefore, there exists a constant $C_W>0$ such that
\begin{equation*}
    0 \leq a_u(x) \leq C_W  \quad \text{for all }x\in \Omega
\end{equation*}
for all $u \in W$. From \cite[Thm.~4.7]{Troltzsch}, we obtain for each $\frac{d}{2} < p \leq 2$ a constant $C_{W,p} >0$ such that
\begin{equation}
    \label{eq:apriori2}
    \norm{G_uw}_{H^1_0(\Omega)} + \norm{G_uw}_{C(\overline\Omega)} \leq C_{W,p} \norm{w}_{L^p(\Omega)} \quad \text {for all } u \in W, w \in L^p(\Omega).
\end{equation} 
This yields the boundedness of $\{ \norm{G_u}_{\Linop(L^2(\Omega),H^1_0(\Omega))} \}_{u \in W}$ and so of $\{ \norm{G_u}_{\Linop(L^2(\Omega),L^2(\Omega))} \}_{u \in W}$. On the other hand, for any $h \in L^2(\Omega)$, $\zeta := G_u^*h$ satisfies
\begin{equation*} 
	\left\{
	\begin{aligned}
		A^* \zeta + a_u \zeta &= h \quad \text {in } \Omega,\\
		\zeta &= 0 \quad \text {on }  \partial \Omega,
	\end{aligned}
	\right.
\end{equation*}
where $A^*$ stands for the adjoint operator of $A$. Similar to \eqref{eq:apriori2}, there holds
\begin{equation*}
    \norm{G_u^*h}_{H^1_0(\Omega)} + \norm{G_u^*h}_{C(\overline\Omega)} \leq \hat L \norm{h}_{L^2(\Omega)} \quad \text {for all } u \in W, h \in L^2(\Omega)
\end{equation*} 
for some constant $\hat L >0$. Thus, $G_u$ fulfills \cref{ass:bouligand,ass:adj_bounded} 
with $U = Y := L^2(\Omega)$ and $Z := H^1_0(\Omega)$ for any $\rho_0>0$. 

\bigskip

It remains to verify the generalized tangential cone condition for \cref{ass:gtcc}. We start with a further technical lemma regarding the nonlinearity.
\begin{lemma}\label{lem:lipschitz}
    Let $\rho^*>0$ and
   \begin{equation*}
        |t|\leq 
    M:=\max\left\{ \sup\nolimits_{u \in \overline  B_{L^2(\Omega)}(u^\dag,\rho^*)}  \|y_u\|_{C(\overline\Omega)},|t_1|,\dots,|t_k|\right\}.
    \end{equation*}
    Then, for any $1 \leq i \leq k$, we have
    \begin{align}
        |f_i(t)-f_i(t_i)| &  \leq \beta_{i} |t-t_i|, \label{fi-Lip1}\\
        |f_{i+1}(t)-f_{i+1}(t_i)| &  \leq \beta_{i}|t -t_i|,  \label{fi-Lip2}
    \end{align} 
    with
    \begin{equation*}
        \beta_{i} := \max \left\{ \sup\nolimits_{|s|\leq 2M}r_{iM}(t_i,s)+ |f'_i(t_i)|, \sup\nolimits_{|s|\leq 2M}r_{(i+1)M}(t_i,s) + |f'_{i+1}(t_i)|\right\} < \infty.
    \end{equation*}
\end{lemma}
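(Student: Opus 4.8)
The plan is to reduce both inequalities to a statement about difference quotients that is controlled by the remainder functions $r_{iM}$ from \cref{lem:remainder}. Fix $1\le i\le k$ and $t$ with $|t|\le M$, and set $s:=t-t_i$. The first thing I would check is that this increment stays in the admissible range: since $M\ge|t_i|$ by the very definition of $M$, we have $|t_i|\le M$, and hence $|s|\le|t|+|t_i|\le 2M$. Moreover $t_i\in[-M,M]$, so that $r_{iM}(t_i,\cdot)$ and $r_{(i+1)M}(t_i,\cdot)$ are well defined on $[-2M,2M]$.

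If $s=0$ then $t=t_i$ and both sides of \eqref{fi-Lip1} and \eqref{fi-Lip2} vanish. For $s\neq 0$, I would invoke the definition \eqref{eq:remainder}: from the identity
\[
    \frac{f_i(t_i+s)-f_i(t_i)}{s}=f_i'(t_i)+\left(\frac{f_i(t_i+s)-f_i(t_i)}{s}-f_i'(t_i)\right),
\]
the triangle inequality yields $\bigl|\tfrac{f_i(t_i+s)-f_i(t_i)}{s}\bigr|\le |f_i'(t_i)|+r_{iM}(t_i,s)$. Taking the supremum over $|s|\le 2M$ bounds the right-hand side by $\sup_{|s|\le 2M}r_{iM}(t_i,s)+|f_i'(t_i)|$, which is the first entry in the definition of $\beta_i$ and hence is at most $\beta_i$. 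Multiplying through by $|s|=|t-t_i|$ gives \eqref{fi-Lip1}, and the identical computation with $f_{i+1}$ and $r_{(i+1)M}$, bounded by the second entry of $\beta_i$, gives \eqref{fi-Lip2}.

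It then remains to verify that $\beta_i<\infty$. Here I would use the continuity of $r_{iM}$ and $r_{(i+1)M}$ on $[-M,M]\times\R$ established in \cref{lem:remainder}: restricting each to the compact segment $\{t_i\}\times[-2M,2M]$ produces a continuous function of $s$ on a compact interval, whose supremum is therefore finite (indeed attained). Combined with the finite values $|f_i'(t_i)|$ and $|f_{i+1}'(t_i)|$, this shows $\beta_i<\infty$.

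I do not anticipate a serious obstacle: the content of the lemma is essentially a mean-value-type Lipschitz estimate of each selection function near the kink $t_i$, made uniform via the remainder functions. The only point demanding care is the bookkeeping around the constant $M$ — namely ensuring both that $t_i$ lies in the first factor $[-M,M]$ of the domain of $r_{iM}$ and that the increment $s$ never leaves $[-2M,2M]$ — so that the supremum defining $\beta_i$ is taken over a compact set on which the continuous remainder remains finite.
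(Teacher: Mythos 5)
Your proof is correct and follows essentially the same route as the paper's: both rest on the defining identity $\left|f_i(t)-f_i(t_i)-f_i'(t_i)(t-t_i)\right| = r_{iM}(t_i,t-t_i)\,|t-t_i|$, the triangle inequality, the observation that $|t-t_i|\leq 2M$ so the supremum defining $\beta_i$ applies, and the continuity of $r_{iM}$, $r_{(i+1)M}$ from \cref{lem:remainder} to get $\beta_i<\infty$. Your additional bookkeeping (the case $s=0$ and the compactness of $\{t_i\}\times[-2M,2M]$) is left implicit in the paper's shorter argument but is exactly the right justification.
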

\begin{proof}
    We first note that $\beta_i<\infty$ for all $1 \leq i \leq k$ since the functions $r_{iM}$ and $r_{(i+1)M}$ are continuous due to \cref{lem:remainder}.
    For any $t \in [-M,M]$, we see from the definition of $r_{iM}$ that 
    \begin{equation*}
        \left|f_i(t) - f_i(t_i) - f_i'(t_i)(t-t_i) \right| = r_{iM}(t_i,t-t_i)|t-t_i|,
    \end{equation*}
    which implies that
    \begin{equation*}
        |f_i(t)-f_i(t_i)|  \leq \left(|f'_i(t_i)| + r_{iM}(t_i,t-t_i) \right)|t-t_i| \leq \beta_i |t-t_i|
    \end{equation*}
    and hence \eqref{fi-Lip1}. The inequality \eqref{fi-Lip2} can be shown similarly.
\end{proof}
The following lemma is a generalization of the key \cref{lem:gtcc-key}.
\begin{lemma} \label{lem:GTCC}
    Let $\rho^*>0$ be given as in \cref{lem:lipschitz} and $\epsilon >0$ be such that
    \begin{equation}
        \label{eq:epsilon_choice1}
        \epsilon < \frac{t_{i}- t_{i-1}}{2} \quad \text {for all} \  2 \leq i \leq k
    \end{equation}
    and $\frac{d}{2} < p <2$. Then there exists a $\bar \rho \in (0, \rho^*]$ such that for all
    $u, \hat u \in \overline  B_{L^2(\Omega)}(u^\dagger, \bar \rho)$, one has $F(u), F(\hat u) \in B_{C(\overline{\Omega})}(y^\dag ,\epsilon)$, and
    \begin{equation*}
        \|F(\hat u) - F(u) - G_u(\hat u - u) \|_{L^2(\Omega)} \leq L_p |\Omega|^{1/2} \| \zeta(u,\hat u) \|_{L^{p'}(\Omega)} \|F(\hat u) - F(u) \|_{L^2(\Omega)}
    \end{equation*} 
    for some constant $L_p>0$ with $p' := \frac{2p}{2-p}$ and
    \begin{multline*}
        \zeta(u,\hat u)  
        := \sum_{i=1}^k \left[\1_{\{y_u \in (t_{i}-\epsilon,t_{i}], y_{\hat u} \in (t_{i},t_{i}+\epsilon)\}} + \1_{\{  y_{\hat u} \in (t_{i}-\epsilon,t_{i}],  y_{u} \in (t_{i},t_{i}+\epsilon)\}} \right ] \beta_i\\
        + \sum_{i=1}^{k+1} \1_{(t_{i-1}, t_{i}]}(y_u)r_{iM}(y_u, y_{\hat u} - y_{u}),
    \end{multline*}
    where the constants $\beta_i$ and $M$ are those from \cref{lem:lipschitz}.
\end{lemma}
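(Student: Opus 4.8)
The plan is to mimic the proof of the ``pointwise'' estimate \cref{lem:gtcc-key}, replacing the characteristic functions $\1_{\{y_u>0\}}$ there by the piecewise derivative $a_u$ here. Write $y:=F(u)$, $\hat y:=F(\hat u)$, $\zeta:=G_u(\hat u-u)$ and $\omega:=\hat y-y-\zeta$. Subtracting the state equations \eqref{eq:semi-elliptic} for $y$ and $\hat y$ and the linearized equation \eqref{eq:linearization-appe} defining $\zeta$, and using $\zeta+\omega=\hat y-y$, I obtain that $\omega$ solves $A\omega+a_u\omega=b$ with $b:=a_u(\hat y-y)-(f(\hat y)-f(y))$. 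Hence $\omega=G_u b$, and since $y,\hat y$ lie in a bounded set of $C(\overline\Omega)$ by \eqref{eq:a-priori} while each $f_i$ is locally Lipschitz, $b\in L^\infty(\Omega)\subset L^p(\Omega)$. The a priori bound \eqref{eq:apriori2} then gives $\|\omega\|_{C(\overline\Omega)}\le L_p\|b\|_{L^p(\Omega)}$ for a constant $L_p$ uniform over the ball, so the whole statement reduces to the pointwise estimate $|b(x)|\le \zeta(u,\hat u)(x)\,|\hat y(x)-y(x)|$ followed by Hölder's inequality.

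Second, I choose the radius. By \cref{prop:appe_Lipschitz}, $\|F(u)-y^\dag\|_{C(\overline\Omega)}\le L_W\|u-u^\dag\|_{L^2(\Omega)}$, so taking $\bar\rho\in(0,\rho^*]$ small enough forces $F(u),F(\hat u)\in B_{C(\overline\Omega)}(y^\dag,\epsilon)$ and, in particular, makes $|y(x)-\hat y(x)|$ as well as the distances of $y(x),\hat y(x)$ to $y^\dag(x)$ as small as needed. Because consecutive breakpoints are separated by more than $2\epsilon$ thanks to \eqref{eq:epsilon_choice1}, for each $x$ the values $y(x)$ and $\hat y(x)$ lie in a common interval of length $<2\epsilon$ containing at most one breakpoint $t_i$; moreover, when they do straddle such a $t_i$, both values fall into the bands $(t_i-\epsilon,t_i]$ and $(t_i,t_i+\epsilon)$, which are exactly the sets appearing in $\zeta(u,\hat u)$.

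Third --- the crux --- I establish the pointwise bound for $b(x)$ by distinguishing two cases according to the interval containing $t:=y(x)$, say $t\in(t_{i-1},t_i]$, so that $a_u(x)=f_i'(t)$ and $f(t)=f_i(t)$. If $\hat t:=\hat y(x)$ lies in the same interval, then $b(x)=f_i'(t)(\hat t-t)-(f_i(\hat t)-f_i(t))$, and the definition \eqref{eq:remainder} of $r_{iM}$ gives $|b(x)|=r_{iM}(t,\hat t-t)\,|\hat t-t|$, which is precisely the contribution of the second sum in $\zeta(u,\hat u)$. If instead $\hat t$ lies across the single breakpoint $t_i$, I use the continuity relation \eqref{affine1} to split the increment $f(\hat t)-f(t)$ at $t_i$ and estimate the two resulting pieces by the Lipschitz bounds \eqref{fi-Lip1} and \eqref{fi-Lip2} of \cref{lem:lipschitz} (whose constants $\beta_i$ are finite by \cref{lem:remainder}), while the smooth part is again controlled by $r_{iM}$; collecting terms yields $|b(x)|\le(\beta_i+r_{iM}(t,\hat t-t))\,|\hat t-t|$, matching the corresponding entries of both sums in $\zeta(u,\hat u)$. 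In either case $|b(x)|\le\zeta(u,\hat u)(x)\,|\hat y(x)-y(x)|$.

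Finally, I apply Hölder's inequality with $\tfrac1p=\tfrac1{p'}+\tfrac12$ to get $\|b\|_{L^p(\Omega)}\le\|\zeta(u,\hat u)\|_{L^{p'}(\Omega)}\|\hat y-y\|_{L^2(\Omega)}$, and combine it with $\|\omega\|_{L^2(\Omega)}\le|\Omega|^{1/2}\|\omega\|_{C(\overline\Omega)}\le L_p|\Omega|^{1/2}\|b\|_{L^p(\Omega)}$ to reach the claim. I expect the genuine difficulty to be the breakpoint-crossing case: one must verify that the localization established in the second step forces the indicators in $\zeta(u,\hat u)$ to be active exactly when a crossing occurs, and then carry out the constant bookkeeping so that the single Lipschitz factor $\beta_i$ (rather than a larger multiple of it) suffices --- the generous definition of $\beta_i$ via $\sup_{|s|\le2M}r_{iM}$ is tailored precisely for this.
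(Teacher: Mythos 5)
Your overall strategy coincides with the paper's: the same $\omega = \hat y - y - G_u(\hat u - u)$ solving $A\omega + a_u\omega = b$ with $b = a_u(\hat y - y) - (f(\hat y)-f(y))$, the same splitting of $b$ into a smooth Taylor remainder controlled by $r_{iM}$ plus a breakpoint mismatch, the same localization of crossings into the $\epsilon$-bands via \eqref{eq:epsilon_choice1} and local Lipschitz continuity of $F$, and the same conclusion via \eqref{eq:apriori2} and H\"older. The difference is purely organizational (you argue pointwise where the paper telescopes the indicator sums), and that part is fine. The problem is that at exactly the point you yourself flag as the crux, your argument does not close.

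In the crossing case, say $y(x) \in (t_i-\epsilon, t_i]$ and $\hat y(x) \in (t_i, t_i+\epsilon)$, splitting the increment at $t_i$ via \eqref{affine1} leaves the mismatch
\begin{equation*}
    f_i(\hat y(x)) - f_{i+1}(\hat y(x)) = \bigl[f_i(\hat y(x)) - f_i(t_i)\bigr] - \bigl[f_{i+1}(\hat y(x)) - f_{i+1}(t_i)\bigr],
\end{equation*}
and estimating the two brackets \emph{separately} by \eqref{fi-Lip1} and \eqref{fi-Lip2} plus the triangle inequality, as you propose, yields $2\beta_i\,|\hat y(x)-t_i|$ --- a factor $2$ worse than the coefficient $\beta_i$ appearing in $\zeta(u,\hat u)$. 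The ``generous'' definition of $\beta_i$ does not repair this: $\beta_i$ is merely a common local Lipschitz constant for $f_i$ and $f_{i+1}$ near $t_i$, and no choice of constant turns the sum of two such bounds into a single one. What actually removes the factor $2$ --- and what your proof never invokes --- is the \emph{monotonicity} of the selection functions combined with \eqref{affine1}: since $\hat y(x) > t_i$ and both $f_i$ and $f_{i+1}$ are non-decreasing, both brackets are nonnegative, so their difference is sandwiched between $-\bigl[f_{i+1}(\hat y(x)) - f_{i+1}(t_i)\bigr]$ and $f_i(\hat y(x)) - f_i(t_i)$; hence its absolute value is bounded by the \emph{maximum} of the two brackets, not their sum, and \cref{lem:lipschitz} then gives the single factor $\beta_i|\hat y(x)-t_i| \leq \beta_i|\hat y(x)-y(x)|$. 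This is precisely how the paper argues. The gap is only a constant factor --- with $2\beta_i$ in place of $\beta_i$ the downstream \cref{cor:gtcc} would survive after adjusting \eqref{eq:epsilon_choice2} --- but the lemma as stated, with the stated $\zeta(u,\hat u)$, is not established by your argument as written.
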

\begin{proof}
    Set $\hat y := F(\hat u)$, $y := F(u)$, $\xi := G_u(\hat u - u)$, and $\omega := \hat y - y - \xi$. We then have that
    \begin{align*}
        A\hat y + f(\hat y) &= \hat u, \\
        Ay + f(y) &=u, \\
        A \xi + a_u \xi &= \hat u -u.
    \end{align*}
    This implies that 
    \begin{equation*}
        A(\hat y - y - \xi) + a_u (\hat y - y - \xi) = f(y) - f(\hat y) + a_u (\hat y - y)
    \end{equation*}
    or, equivalently,
    \begin{equation}
        \label{eq:omega}
        A\omega +  a_u \omega = b
    \end{equation}
    with 
    \begin{equation*}
        b := f(y) - f(\hat y) + a_u (\hat y - y). 
    \end{equation*}
    A computation then yields that
    \begin{equation}\label{eq:b-term}
        \begin{aligned}[t]
            b   &= \sum_{i=1}^{k+1}\1_{(t_{i-1},t_i]}(y) f_{i}(y) - \sum_{i=1}^{k+1}\1_{(t_{i-1},t_i]}(\hat y)f_{i}(\hat y)  + \sum_{i=1}^{k+1}\1_{(t_{i-1},t_i]}(y)f'_{i}(y) (\hat y - y)\\    
                & = b_1 + b_2
        \end{aligned}
    \end{equation}
    with 
    \begin{align*}
        b_1 &:= - \sum_{i=1}^{k+1} \1_{(t_{i-1},t_i]}(y)  \left( f_{i}(\hat y) - f_{i}( y) - f'_{i}(y)(\hat y - y) \right)  
        \shortintertext{and} 
        b_2 &:= \sum_{i=1}^{k+1} \left(  \1_{(t_{i-1},t_i]}(y) -  \1_{(t_{i-1},t_i]}(\hat y)  \right) f_{i}(\hat y). 
    \end{align*}
    From the definition of $r_{iM}$, it holds that
    \begin{equation} 
        \label{eq:b1-term}
        |b_{1}| \leq  \sum_{i=1}^{k+1} \1_{(t_{i-1},t_i]}(y) r_{iM}(y, \hat y -y)  |\hat y - y|.
    \end{equation}
    To estimate $b_2$, we first observe that
    \begin{equation}
    	\label{eq:compute-1}
    	d_i := \1_{(t_{i-1},t_i]}(y) -\1_{(t_{i-1},t_i]}(\hat y) = \left \{ 
    	\begin{aligned}
            1 & \quad \text {if } y \in (t_{i-1}, t_i] \text{ and } \hat y \notin  (t_{i-1}, t_i], \\
            - 1& \quad \text {if } \hat y \in (t_{i-1}, t_i] \text{ and } y \notin  (t_{i-1}, t_i], \\
    		0 & \quad \text {otherwise}. 
    	\end{aligned}
    	\right.
    \end{equation}
    Secondly, using the local Lipschitz continuity \eqref{eq:F-Lip2}, we can find a constant $\bar \rho >0$ such that
    \begin{equation}\label{eq:close} 
        \| \hat y - y \|_{C(\overline{\Omega})} = \| F({\hat u}) - F(u) \|_{C(\overline{\Omega})} < \epsilon\quad\text{for all }u, \hat u \in \overline  B_{L^2(\Omega)}(u^\dag, \bar \rho).
    \end{equation} 
    Moreover, 
    \begin{multline*}
    	\left \{ y \in (t_{i-1}, t_i], \hat y \notin  (t_{i-1}, t_i], |y -\hat y| < \epsilon  \right\} \\
        \begin{aligned}[t]
            &= \left\{ y \in (t_{i-1}, t_{i-1}+\epsilon), \hat y \in  (y - \epsilon, t_{i-1}] \right\} \cup \left\{ y \in (t_{i}-\epsilon, t_{i}], \hat y \in  (t_i, y +  \epsilon) \right\} \\
            &= \left\{ \hat y \in (t_{i-1} -\epsilon, t_{i-1}], y \in  (t_{i-1}, \hat y + \epsilon) \right\} \cup \left\{\hat y \in (t_{i}, t_{i} + \epsilon),  y \in  (\hat y - \epsilon, t_i] \right\} 
        \end{aligned}
    \end{multline*}
    and
    \begin{multline*}
    	\left\{ \hat y \in (t_{i-1}, t_i], y \notin  (t_{i-1}, t_i], |y -\hat y| < \epsilon  \right\} \\
    	= \left\{ \hat y \in (t_{i-1}, t_{i-1}+\epsilon), y \in  (\hat y - \epsilon, t_{i-1}] \right\} \cup \left\{\hat y \in (t_{i}-\epsilon, t_{i}],  y \in  (t_i, \hat y +  \epsilon) \right\}
    \end{multline*}  
    with the convention that 
    \begin{equation*}
    (-\infty -\epsilon, -\infty) = (-\infty, -\infty + \epsilon) = (+\infty - \epsilon, +\infty) = (+\infty, +\infty +\epsilon)= \emptyset. 
    \end{equation*}  
    Hence, for all $y,\hat y$ satisfying \eqref{eq:close}, we can decompose \eqref{eq:compute-1} into 
    \begin{equation} \label{eq:d_i-term}
        \begin{aligned}[t]
            d_i & = \1_{\left\{ \hat y \in (t_{i-1} -\epsilon, t_{i-1}], y \in  (t_{i-1}, \hat y + \epsilon) \right\}} + \1_{ \left\{\hat y \in (t_{i}, t_{i} + \epsilon),  y \in  (\hat y - \epsilon, t_i] \right\}}  \\  
            \MoveEqLeft[-1] - \1_{\left\{ \hat y \in (t_{i-1},t_{i-1}+\epsilon), y \in (\hat y-\epsilon,t_{i-1}] \right\} } - \1_{\left\{ \hat y \in (t_{i}-\epsilon,t_{i}],  y \in (t_{i},\hat y+\epsilon)\right\}}.  
        \end{aligned}
    \end{equation}    
    Multiplying both sides of \eqref{eq:d_i-term} by $f_i(\hat y)$ and then summing up, we obtain that
    \begin{align*}
        \sum_{i=1}^{k+1} d_i f_i(\hat y) &
        \begin{aligned}[t]
            & = \sum_{i=1}^{k} \left[ \1_{\{\hat y \in (t_{i},t_{i} + \epsilon), y \in (\hat y-\epsilon, t_i] \}} -  \1_{\{ \hat y \in (t_{i}-\epsilon,t_{i}],  y \in (t_{i},\hat y+\epsilon)\}} \right] \left( f_i(\hat y) - f_{i+1}(\hat y  )\right) 
        \end{aligned}
    \end{align*}
    and hence that
    \begin{equation*} 
        \begin{aligned}[t]
            |b_2|
            & \leq  \sum_{i=1}^{k} \left[ \1_{\{\hat y \in (t_{i},t_{i} + \epsilon), y \in (\hat y-\epsilon, t_i] \}} +  \1_{\{ \hat y \in (t_{i}-\epsilon,t_{i}],  y \in (t_{i},\hat y+\epsilon)\}} \right] \left |f_i(\hat y) - f_{i+1}(\hat y  )\right | \\
            & \leq  \sum_{i=1}^{k} \left[ \1_{\{\hat y \in (t_{i},t_{i} + \epsilon), y \in (t_i-\epsilon, t_i] \}} +  \1_{\{ \hat y \in (t_{i}-\epsilon,t_{i}],  y \in (t_{i},t_i+\epsilon)\}} \right] \left |f_i(\hat y) - f_{i+1}(\hat y  )\right | .
        \end{aligned}
    \end{equation*}
    Furthermore, on the set $\{y \in (t_{i}-\epsilon,t_{i}], \hat y \in (t_{i},t_{i}+\epsilon)\}$ we deduce from the non-decreasing monotonicity of $f_i$ and $f_{i+1}$ that
    \begin{equation*}
        f_i(\hat y) \geq f_i(t_i) = f_{i+1}(t_i) \leq f_{i+1}(\hat y),
    \end{equation*} 
    which gives 
    \begin{equation*}
        f_{i+1}(t_i) - f_{i+1}(\hat y) \leq f_i(\hat y) - f_{i+1}(\hat y) \leq f_i(\hat y) - f_i(t_i).
    \end{equation*}
    Consequently,
    \begin{equation*}
        |f_{i}(\hat y) - f_{i+1}(\hat y)| \leq \max \{ |f_{i+1}(t_i) - f_{i+1}(\hat y)|, |f_i(\hat y) - f_i(t_i)|  \}
    \end{equation*} on the set $\{y \in (t_{i}-\epsilon,t_{i}], \hat y \in (t_{i},t_{i}+\epsilon)\}$.
    Combining this with \eqref{fi-Lip1} and \eqref{fi-Lip2} from \cref{lem:lipschitz} yields
    \begin{equation*} 
        \begin{aligned}[t]
            \1_{\{y \in (t_{i}-\epsilon,t_{i}], \hat y \in (t_{i},t_{i}+\epsilon)\} }|f_{i}(\hat y) - f_{i+1}(\hat y)| 
            & \leq \1_{\{y \in (t_{i}-\epsilon,t_{i}], \hat y \in (t_{i},t_{i}+\epsilon)\} } \beta_i |\hat y - t_i| \\
            &  \leq  \1_{\{y \in (t_{i}-\epsilon,t_{i}], \hat y \in (t_{i},t_{i}+\epsilon)\} } \beta_i|\hat y - y|.             
        \end{aligned}
    \end{equation*}
    Similarly, we obtain that
    \begin{equation*} 
        \begin{aligned}[t]
            \1_{\{\hat y \in (t_{i}-\epsilon,t_{i}], y \in (t_{i},t_{i}+\epsilon)\} }|f_{i}(\hat y) - f_{i+1}(\hat y)| 
            &  \leq \beta_i \1_{\{\hat y \in (t_{i}-\epsilon,t_{i}], y \in (t_{i},t_{i}+\epsilon)\} } |\hat y - y|.             
        \end{aligned}
    \end{equation*}
    These inequalities show that 
    \begin{equation*} 
        |b_2| \leq \sum_{i=1}^{k} \beta_i \left( \1_{\{y \in (t_{i}-\epsilon,t_{i}], \hat y \in (t_{i},t_{i}+\epsilon)\} } +\1_{\{\hat y \in (t_{i}-\epsilon,t_{i}], y \in (t_{i},t_{i}+\epsilon)\} } \right) |\hat y - y|.
    \end{equation*}
    Combining this with \eqref{eq:b-term} and \eqref{eq:b1-term} yields  that
    \begin{equation*}
        |b|   \leq \zeta(u,\hat u) |y - \hat y|. 
    \end{equation*}
    We now apply the estimate \eqref{eq:apriori2} to \eqref{eq:omega} to estimate
    \begin{equation*}
        \| \omega \|_{C(\overline{\Omega})} = \norm{G_u b}_{C(\overline{\Omega})} \leq L_p \norm{b}_{L^p(\Omega)}
    \end{equation*}
    for some constant $L_p>0$. 
    From this and the Hölder inequality, we obtain the desired result.  
\end{proof}

We can now verify \cref{ass:gtcc}.
\begin{corollary} \label{cor:gtcc}
    Let $\mu>0$ and assume that $|\{y^\dag = t_i\}|$ is sufficiently small for all $1 \leq i \leq k$. Then there exists a $\rho>0$ such that \eqref{eq:GTCC} holds
    for all $u, \hat u \in \overline  B_{L^2(\Omega)}(u^\dag, \rho)$.
\end{corollary}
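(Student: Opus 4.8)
The plan is to combine the pointwise estimate of \cref{lem:GTCC} with the vanishing of the remainder terms from \cref{lem:remainder} and the smallness hypothesis on the level sets $\{y^\dag = t_i\}$, in direct analogy with the proof of \cref{cor:TCC} but accounting for the more intricate structure of $\zeta(u,\hat u)$. First I would fix an arbitrary $\rho^*>0$ (so that the constants $M$ and $\beta_i$ from \cref{lem:lipschitz} are determined), choose some $p\in(d/2,2)$, and pick a candidate $\epsilon>0$ satisfying \eqref{eq:epsilon_choice1}. Applying \cref{lem:GTCC} then yields a radius $\bar\rho\in(0,\rho^*]$ together with the estimate
\begin{equation*}
    \norm{F(\hat u) - F(u) - G_u(\hat u - u)}_{L^2(\Omega)} \leq L_p |\Omega|^{1/2}\, \norm{\zeta(u,\hat u)}_{L^{p'}(\Omega)}\, \norm{F(\hat u) - F(u)}_{L^2(\Omega)}
\end{equation*}
for all $u,\hat u\in\overline B_{L^2(\Omega)}(u^\dag,\bar\rho)$. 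It therefore suffices to force the prefactor $L_p|\Omega|^{1/2}\norm{\zeta(u,\hat u)}_{L^{p'}(\Omega)}$ below $\mu$ \emph{uniformly} in $u,\hat u$ by shrinking $\epsilon$ (and hence $\bar\rho$); to this end I would split $\zeta=\zeta_{\mathrm{jump}}+\zeta_{\mathrm{rem}}$ into the first (kink) sum and the second (remainder) sum and bound each in $L^{p'}(\Omega)$ via the triangle inequality.

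For the remainder part, the a priori bound \eqref{eq:a-priori} confines every state $y_u$ to a fixed ball in $C(\overline\Omega)$, so that $|y_u|\le M$ pointwise, while \eqref{eq:close} gives $\norm{y_{\hat u}-y_u}_{C(\overline\Omega)}<\epsilon$. Since the intervals $(t_{i-1},t_i]$ partition $\R$, exactly one summand of $\zeta_{\mathrm{rem}}$ is active at each point, whence pointwise $|\zeta_{\mathrm{rem}}|\le \max_{1\le i\le k+1}\sup_{|t|\le M,\,|s|\le\epsilon} r_{iM}(t,s)=:\omega(\epsilon)$. By \cref{lem:remainder}, $\omega(\epsilon)\to 0$ as $\epsilon\to 0^+$, so $\norm{\zeta_{\mathrm{rem}}}_{L^{p'}(\Omega)}\le |\Omega|^{1/p'}\omega(\epsilon)$ can be made arbitrarily small independently of $u,\hat u$.

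For the jump part, the two indicator sets attached to each fixed $i$ are disjoint (one forces $y_u\le t_i<y_{\hat u}$, the other the reverse), and by \eqref{eq:epsilon_choice1} the intervals $(t_i-\epsilon,t_i+\epsilon)$ are pairwise disjoint, so the supports for distinct $i$ are disjoint as well. Using $\norm{y_u-y^\dag}_{C(\overline\Omega)}<\epsilon$ I would verify that each such set is contained in $\{|y^\dag-t_i|\le 2\epsilon\}$, giving
\begin{equation*}
    \norm{\zeta_{\mathrm{jump}}}_{L^{p'}(\Omega)}^{p'}\le \sum_{i=1}^{k}\beta_i^{p'}\, \bigl|\{|y^\dag-t_i|\le 2\epsilon\}\bigr|.
\end{equation*}
By continuity of the measure from above, $|\{|y^\dag-t_i|\le 2\epsilon\}|\to |\{y^\dag=t_i\}|$ as $\epsilon\to 0^+$. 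Hence, choosing $\epsilon$ first small enough to make $\norm{\zeta_{\mathrm{rem}}}_{L^{p'}(\Omega)}$ negligible and to bring the measures close to $|\{y^\dag=t_i\}|$, and then invoking that each $|\{y^\dag=t_i\}|$ is small, I obtain $L_p|\Omega|^{1/2}\norm{\zeta(u,\hat u)}_{L^{p'}(\Omega)}\le\mu$; setting $\rho:=\bar\rho$ finishes the proof. The main obstacle is coordinating the two limits: the remainder contribution is driven to zero purely by $\epsilon\to 0^+$, whereas the jump contribution can only shrink to the residual mass $\sum_i\beta_i^{p'}|\{y^\dag=t_i\}|$. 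Since the $\beta_i$ are fixed once $\rho^*$ is chosen, the quantitative meaning of \emph{``sufficiently small''} is precisely that $\sum_{i=1}^{k}\beta_i^{p'}|\{y^\dag=t_i\}|$ lies below a threshold determined by $\mu$, $L_p$, and $|\Omega|$.
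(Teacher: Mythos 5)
Your proof is correct, and its skeleton is the one the paper uses: invoke \cref{lem:GTCC}, split $\zeta(u,\hat u)$ into the kink sum and the remainder sum, control the former by the measures of neighbourhoods of the level sets $\{y^\dag = t_i\}$ and the latter by the smallness of $y_{\hat u}-y_u$. The genuine difference lies in how the remainder sum is killed. The paper fixes $\epsilon$ once and for all from the smallness hypothesis (its condition \eqref{eq:epsilon_choice2}), obtains $\bar\rho$ from \cref{lem:GTCC}, and then shrinks the radius \emph{further} to some $\rho\in(0,\bar\rho]$, using the continuity of $F:L^2(\Omega)\to C(\overline\Omega)$ together with Lebesgue's dominated convergence theorem to get $r_{iM}(y_u,y_{\hat u}-y_u)\to 0$ in $L^{p'}(\Omega)$ as $u,\hat u\to u^\dag$ (see \eqref{eq:choice_rho}). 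You instead exploit the uniformity in \cref{lem:remainder} directly: since $|y_u|\leq M$ and $|y_{\hat u}-y_u|<\epsilon$ pointwise, the remainder is dominated by the modulus $\omega(\epsilon):=\max_i\sup_{|t|\leq M,\,|s|\leq\epsilon}r_{iM}(t,s)\to 0$, so both contributions are driven by the single parameter $\epsilon\to 0^+$ and you may simply take $\rho=\bar\rho$ with no secondary shrinking. This is more elementary (no convergence theorem for superposition operators is needed), gives a bound that is manifestly uniform over the ball, and makes the quantitative meaning of ``sufficiently small'' explicit, which the paper leaves implicit in \eqref{eq:epsilon_choice2}. Two minor divergences are harmless: your containment of the kink sets uses only one of the two states and hence lands in $\{|y^\dag-t_i|\leq 2\epsilon\}$, whereas the paper uses both inequalities $y_u\leq t_i<y_{\hat u}$ to land in the sharper set $\{|y^\dag-t_i|<\epsilon\}$ (see \eqref{eq:contain1}--\eqref{eq:contain2}); and your threshold $\sum_{i=1}^k\beta_i^{p'}|\{y^\dag=t_i\}|$ quantifies the hypothesis in an $\ell^{p'}$ form, while the paper's \eqref{eq:epsilon_choice2} uses $\sum_{i=1}^k\beta_i\,|\{|y^\dag-t_i|<\epsilon\}|^{1/p'}$; the two are equivalent up to constants depending only on $k$.
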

\begin{proof}
    Since $|\{y^\dag = t_i\}|$ is sufficiently small for all $1 \leq i \leq k$, there exists a constant $\epsilon > 0$ satisfying \eqref{eq:epsilon_choice1} and 
    \begin{equation}
        \label{eq:epsilon_choice2}
        2L_p |\Omega|^{1/2} \sum_{i=1}^k \left|\left \{ |y^\dag - t_i | < \epsilon\right \} \right|^{1/p'} \beta_i \leq \frac{\mu}{2}
    \end{equation}      
    with $L_p$ and $p'$ as in \cref{lem:GTCC}.
    Let $\bar\rho$ be defined as in \cref{lem:GTCC}. Since $y_u, y_{\hat u} \in B_{C(\overline{\Omega})}(y^\dag ,\epsilon)$ for all $u ,\hat u \in \overline B_{L^2(\Omega)}(u^\dag ,\bar \rho)$ with $y_u := F(u), \hat y := F(\hat u)$, we have that
    \begin{align}
        \label{eq:contain1}
        &\left\{y_u \in (t_{i}-\epsilon,t_{i}], y_{\hat u} \in (t_{i},t_{i}+\epsilon)\right\} \subset  \left\{ |y^\dag - t_i | < \epsilon \right\}, \\
        & \left\{  y_{\hat u} \in (t_{i}-\epsilon,t_{i}],  y_{u} \in (t_{i},t_{i}+\epsilon)\right\} \subset  \left\{ |y^\dag - t_i | < \epsilon \right\},
        \label{eq:contain2}
    \end{align}
    for all $1 \leq i \leq k$. On the other hand, using the continuity of $F$ from $L^2(\Omega)$ to $C(\overline{\Omega})$ and the uniform limit \eqref{eq:infinitesimal}, Lebesgue's dominated convergence theorem implies that the superposition operators $r_{iM}:L^2(\Omega) \to L^{p'}(\Omega)$ defined by \eqref{eq:remainder} satisfy
    \begin{equation*}
        r_{iM}(y_u, y_{\hat u} - y_u) \to 0 \ \text {in} \ L^{p'}(\Omega) \quad \text {as } u, \hat u \to u^\dag \ \text {in} \ L^{2}(\Omega)
    \end{equation*} 
    for all $1 \leq i \leq k+1$. We can thus find a $\rho \in (0, \bar \rho]$ such that 
    \begin{equation}
        \label{eq:choice_rho}
        L_p |\Omega|^{1/2}\sum_{i =1}^{k+1} \norm{r_{iM}(y_u, y_{\hat u} - y_u)}_{L^{p'}(\Omega)} \leq \frac{\mu}{2}
    \end{equation}
    for all $u, \hat u \in \overline B_{L^2(\Omega)}(u^\dag ,\rho)$.
    Using \eqref{eq:epsilon_choice2}, \eqref{eq:contain1}, \eqref{eq:contain2}, and \eqref{eq:choice_rho}, the definition of $\zeta(u, \hat u)$ now ensures that $L_p |\Omega|^{1/2}\norm{\zeta(u, \hat u)}_{L^{p'}(\Omega)} \leq \mu$
    for all $u, \hat u \in \overline B_{L^2(\Omega)}(u^\dag ,\rho)$. 
    The generalized tangential cone condition \eqref{eq:GTCC} then follows from \cref{lem:GTCC}.
\end{proof}

\printbibliography

\end{document}